\documentclass[12pt]{amsart}
\usepackage{amsmath}
\usepackage{amsfonts}
\usepackage{amssymb,color}
\usepackage{url}
\usepackage[hidelinks]{hyperref}
\usepackage[hyphenbreaks]{breakurl}
\usepackage{amsthm,doi}
\usepackage{thmtools}
\usepackage{physics}
\usepackage{tikz}
\usepackage{enumitem}

\usepackage[a4paper, hmargin={1.9cm,1.9cm},vmargin={2.9cm,2.5cm}]{geometry}

\linespread{1.05}
\setlength{\parskip}{0.3\baselineskip}

\makeatletter
\newcommand\RedeclareMathOperator{%
	\@ifstar{\def\rmo@s{m}\rmo@redeclare}{\def\rmo@s{o}\rmo@redeclare}%
}
\newcommand\rmo@redeclare[2]{%
	\begingroup \escapechar\m@ne\xdef\@gtempa{{\string#1}}\endgroup
	\expandafter\@ifundefined\@gtempa
	{\@latex@error{\noexpand#1undefined}\@ehc}%
	\relax
	\expandafter\rmo@declmathop\rmo@s{#1}{#2}}
\newcommand\rmo@declmathop[3]{%
	\DeclareRobustCommand{#2}{\qopname\newmcodes@#1{#3}}%
}
\@onlypreamble\RedeclareMathOperator
\makeatother

\RedeclareMathOperator{\var}{Var}
\DeclareMathOperator{\cov}{Cov}
\DeclareMathOperator{\sgn}{sgn}
\DeclareMathOperator{\jac}{Jac}

\newcommand{\C}{\mathbb{C}}

\newcommand{\charge}{\kappa}
\newcommand{\newcharge}{\mu}

\newcommand{\E}{\mathbb{E}}
\newcommand{\al}{\alpha}
\newcommand{\bl}{\beta}
\newcommand{\cl}{\gamma}
\newcommand{\dl}{\delta}

\newtheorem{lemma}{Lemma}[section]
\newtheorem{theorem}[lemma]{Theorem}
\newtheorem{coro}[lemma]{Corollary}
\newtheorem{prop}[lemma]{Proposition}

\theoremstyle{definition}

\numberwithin{equation}{section}
\newtheorem{rem}[lemma]{Remark}

\allowdisplaybreaks

\keywords{Gaussian Weyl-Heisenberg Function, zero set, spectrogram zero, twisted stationarity, short-time Fourier transform, hyperuniformity}

\subjclass[2020]{60G15, 60G55, 94A12, 42A61}

\title[Hyperuniformity and non-hyperuniformity of zeros of GWHF]{Hyperuniformity and non-hyperuniformity of zeros of Gaussian Weyl-Heisenberg Functions}

\author[N. Feldheim]{Naomi Feldheim}
\address[N. Feldheim]{Department of Mathematics, Bar-Ilan University, Israel}
\email{naomi.feldheim@biu.ac.il}

\author[A. Haimi]{Antti Haimi}
\address[A. Haimi]
{Department of Mathematics, \r{A}bo Akademi University, Tuomiokirkontori 3, 20500 Turku, Finland}
\email{antti.haimi@abo.fi}

\author[G. Koliander]{G\"{u}nther Koliander}
\address[G. Koliander]{Acoustics Research Institute, Austrian Academy of Sciences, Dominikanerbastei 16,	1010 Vienna, Austria}
\email{guenther.koliander@oeaw.ac.at}

\author[J. L. Romero]{Jos\'{e} Luis Romero}
\address[J. L. Romero]{Faculty of Mathematics, University of Vienna, Oskar-Morgenstern-Platz 1, A-1090 Vienna, Austria, and Acoustics Research Institute, Austrian Academy of Sciences, Dominikanerbastei 16,	1010 Vienna, Austria}
\email{jose.luis.romero@univie.ac.at}

\begin{document}\setlength{\jot}{10pt}
	
\begin{abstract}
We study zero sets of twisted stationary Gaussian random functions on the complex plane, i.e., Gaussian random functions that are stochastically invariant under the action of the Weyl-Heisenberg group. This model includes translation-invariant Gaussian entire functions (GEFs), and also many other non-analytic examples, in which case winding numbers around zeros can be either positive or negative. We investigate zero statistics both when zeros are weighted with their winding numbers (charged zero set) and when they are not (uncharged zero set).

We show that the variance of the charged zero statistic always grows linearly with the radius of the observation disk (hyperuniformity). Importantly, this holds for functions with possibly non-zero means and without assuming additional symmetries such as radiality. 
With respect to uncharged zero statistics, we provide an example for which the variance grows with the area of the observation disk (non-hyperuniformity). This is used to show that, while the zeros of GEFs are hyperuniform, the set of their critical points fails to be so.

Our work contributes to recent developments in statistical signal processing, where the time-frequency profile of a non-stationary signal embedded into noise is revealed by performing a statistical test on the zeros of its spectrogram (``silent points''). We show that empirical spectrogram zero counts enjoy moderate deviations from their ensemble averages over large observation windows (something that was previously known only for pure noise). In contrast, we also show that spectrogram maxima (``loud points'') fail to enjoy a similar property. This gives the first formal evidence for the statistical superiority of silent points over the competing feature of loud points, a fact that has been noted by practitioners. In the same vein, our second order asymptotics for spectrogram maxima show that certain heuristic proxy models used in signal processing are inaccurate at large scales.
\end{abstract}

\maketitle

\section{Introduction and Results}
\subsection{Gaussian Weyl-Heisenberg functions}
We study random functions on the complex plane $F\colon \mathbb{C} \to \mathbb{C}$ and their zeros. Specifically, we consider 
\begin{align}\label{eq_F}
F=F_0 + F_1,
\end{align}
where $F_1\colon \mathbb{C} \to \mathbb{C}$ is deterministic and $F_0$ is a circularly symmetric complex Gaussian random field on $\mathbb{C}$ with covariance kernel of the form
\begin{align}\label{eq_cov}
\mathbb{E} \big[ F_0(z) \cdot \overline{F_0(w)} \big] =
H(z-w) \cdot e^{i \mathrm{Im}(z \bar w)}=H(z-w) \cdot e^{\frac12(z \bar w- w \bar z)}, \qquad z,w\in\mathbb{C}.
\end{align}
Here, $H\colon \mathbb{C} \to \mathbb{C}$ is a suitably smooth function called \emph{twisted kernel}. 

The covariance structure \eqref{eq_cov} means that $F_0$ is \emph{twisted stationary}, that is, the distribution of $F_0$ is invariant under all \emph{twisted shifts}
\begin{align}\label{eq_ts}
\mathcal{T}_{\xi}F_0(z)= F_0(z-\xi)e^{i \mathrm{Im}(z \bar \xi)}, \qquad \xi,z \in \mathbb{C}.
\end{align}
The random functions $F_0$ were introduced in \cite{hkr22} and named \emph{Gaussian Weyl-Heisenberg Functions} (GWHF), as the operators \eqref{eq_ts} generate the (reduced) Weyl-Heisenberg group \cite{folland89}. Here we extend that nomenclature to include the random functions \eqref{eq_F}, which have a possibly non-trivial mean~$F_1$.

We mention en passant some first examples. For the special choice 
\begin{align*}
H(z)=e^{-\tfrac{1}{2}\abs{z}^2},
\end{align*}
$F_0(z)$ can be identified with a \emph{translation-invariant Gaussian entire function} \cite{NSwhat, gafbook} as follows:
\begin{align*}
F_0(z) = e^{-\tfrac{1}{2}\abs{z}^2} G_0(z),
\end{align*}
with
\begin{align}\label{eq_g0}
G_0(z) = \sum_{k \geq 0} \frac{\xi_k}{\sqrt{k!}} z^k,
\end{align}
and $\xi_k$ independent standard complex random variables. 
Other choices of $H$ may lead to non-analytic random functions. For example, if
\begin{align*}
H(z)=(1-|z|^2) e^{-\tfrac{1}{2}\abs{z}^2},
\end{align*}
then
\begin{align}\label{eq_g00}
F_0 (z) = e^{-\tfrac{1}{2}\abs{z}^2} \big[ \bar{z} \,G_0(z) - \partial G_0(z)\big],
\end{align}
where $G_0$ is the Gaussian entire function \eqref{eq_g0} and $\partial G_0$ is its derivative; see \cite[Section 6.5]{hkr22}. The expression in brackets in \eqref{eq_g00} is the \emph{covariant derivative} of $G_0$ and is instrumental in studying critical points of the weighted magnitude
$e^{-\abs{z}^2/2} | G_0(z)|$ (see Section \ref{sec_gef}). Examples of GWHF relevant in signal processing are discussed in Section \ref{sec_intro_tf}.

\subsection{Assumptions}
\label{sec_intro_h}
To describe smoothness, we will employ the differential operators 
\begin{align}\label{eq_D}
	\mathcal{D}_1F(z) = \partial F(z) -  \tfrac{\bar z}{2}F(z), \quad \mathcal{D}_2F(z) = \bar \partial F(z) + \tfrac{z}{2}F(z),
\end{align}
called \emph{twisted derivatives},
which commute with the twisted shifts \eqref{eq_ts}:
\begin{align}\label{eq_commute}
	\mathcal{D}_j \mathcal{T}_{\xi} = \mathcal{T}_{\xi} \mathcal{D}_j, \qquad \xi \in \mathbb{C}, \quad  j=1,2.
\end{align}
Here, we use the Wirtinger differential operators 
\begin{align*}
\partial = \frac12( \partial_x- i \partial_y), \quad \bar \partial = \frac12(\partial_x + i \partial_y).
\end{align*}
Throughout, we make the following assumptions, which are similar to those made in \cite{hkr22}:
\begin{itemize}[leftmargin=1em,itemsep=4pt]
    \item  We assume that the deterministic function $F_1$ is $C^2$ and
\begin{align}\label{A1}
 \sup_{z \in \mathbb{C}} |F_1(z)|,\, \sup_{z \in \mathbb{C}} |\mathcal{D}_j F_1(z)| < \infty, \qquad j=1,2.
\end{align}
\item  For the twisted kernel we assume the positive-definiteness condition
\begin{align}\label{A2}
	\Big(
	H(z_k - z_j) \cdot e^{i \Im (z_k \overline{z_j})} \Big)_{1\le j,k\le n} \geq 0 \qquad \mbox{for all } z_1,\ldots, z_n \in \mathbb{C},
\end{align}
which guarantees that \eqref{eq_cov} is indeed a covariance kernel.
This implies that
\begin{align} \label{eq_symm}
	H(-z)&= \overline{H(z)}, \qquad z \in \mathbb{C},
\end{align}
and  $H(0) \geq 0$.

\item  We further impose the normalization
\begin{align}\label{A3}
	H(0)&=1,
\end{align}
which means that $F(z)$ has unit variance.

\item  We assume the regularity condition 
\begin{align}\label{A4}
	|H(z)|<1, \quad z \in \C \setminus \{ 0 \}, 
\end{align}
which implies that no two samples $F(z)$, $F(w)$ are deterministically correlated, as the determinant of their covariance matrix is
\begin{align}\label{A4p}
	1-|H(z-w)|^2 \not=0, \qquad z \neq w.
\end{align}

\item  We assume that\footnote{Separability is a technical condition, which is sometimes not even explicitly mentioned in the literature. Informally, it means that the process is determined by its values on a certain countable set. A process is separable as soon as it has continuous paths, and, on the other hand, a separable process with smooth covariance has smooth paths \cite[Section 1.1]{adler} \cite[Chapter 1]{level}.} 
\begin{align}\label{A5}
\text{$F$ is a separable process and $H$ is $C^6$ smooth in the real sense},
\end{align} 
which guarantees that $F$ is almost surely a $C^2$ function \cite[Chapter 1]{level}.

\item  We assume the decay condition
\begin{align}\label{A6}
\sup_{z \in \mathbb{C}} (1+|z|^2) |H(z)|, \,
\sup_{z \in \mathbb{C}} (1+|z|^2) |\mathcal{D}_iH(z)|, \,
\sup_{z \in \mathbb{C}} (1+|z|^2) |\mathcal{D}_i \overline{\mathcal{D}_j}H(z)| < \infty,
\qquad i,j=1,2.
\end{align}
\end{itemize}

\subsection{Charged zeros and hyperuniformity}
We  augment each zero $z$ of $F$ with the attribute of \emph{charge} $\pm 1$, according to whether $F$ preserves or reverses orientation around $z$. More precisely, we inspect the differential matrix $DF$ of $F$ considered as $F\colon \mathbb{R}^2 \to \mathbb{R}^2$ and define
\begin{align}\label{eq_charge}
\begin{aligned}
	\charge_z := \begin{cases}
		1 & \mbox{if } \det DF(z) >0 \\
		0 & \mbox{if } \det DF(z)  = 0 \\
		-1 & \mbox{if } \det DF(z) < 0
	\end{cases}.
\end{aligned}
\end{align}
As we show in Lemma \ref{lem_pi}, almost surely, $\{\charge_z : F(z)=0\} \subset \{-1,1\}$.

The following is our first main result.

\begin{theorem}\label{th1}
Let $F$ be the GWHF \eqref{eq_F} and assume \eqref{A1}, \eqref{A2}, \eqref{A3}, \eqref{A4}, \eqref{A5}, and \eqref{A6}. Then 
\begin{align*}
\sup_{w \in \mathbb{C}} \sup_{R \geq 1} \tfrac{1}{R} \mathrm{Var} \Big[ \sum_{|z-w| \leq R} \charge_z \Big] < \infty.
\end{align*}
\end{theorem}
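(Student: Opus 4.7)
The key insight is that the charged zero count in a disk is the topological degree of $F$ on the boundary circle, so for a Gaussian field with short-range correlations its variance should scale with the perimeter rather than the area. The first step is to reduce to $w=0$ by twisted stationarity. Set $\tilde F(z):=F(z+w)\,e^{-i\Im(z\bar w)}=\tilde F_0(z)+\tilde F_1(z)$ with $\tilde F_0:=\mathcal{T}_{-w}F_0\overset{d}{=}F_0$ and $\tilde F_1(z):=F_1(z+w)\,e^{-i\Im(z\bar w)}$. Multiplication by the unimodular phase preserves the modulus of $F$ and preserves the Jacobian determinant at every zero (the extra phase-derivative term is multiplied by $F$ and hence vanishes at zeros), so both the zero set and the charges are preserved. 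By the commutation \eqref{eq_commute}, $\tilde F_1$ inherits the uniform $C^2$ bounds \eqref{A1}, and it therefore suffices to bound $\mathrm{Var}\bigl[\sum_{|z|\leq R}\charge_z\bigr]$ uniformly over $R\geq 1$ and over all deterministic means $\tilde F_1$ satisfying \eqref{A1}.

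For almost every $R>0$, $F$ is almost surely non-vanishing on $\partial B(0,R)$ (by Fubini and \eqref{A4p}), and on this full-measure event (together with Lemma \ref{lem_pi}, which rules out degenerate zeros) the generalized argument principle for $C^1$ maps $\mathbb{R}^2\to\mathbb{R}^2$ gives $\sum_{|z|\leq R}\charge_z=\deg\bigl(F|_{\partial B(0,R)},0\bigr)$. To avoid the singularity of $d\arg F$ near small values of $|F|$, I would realize this degree as a signed count of crossings of the negative real ray by the closed curve $t\mapsto F(Re^{it})=U(t)+iV(t)$:
\begin{align*}
\sum_{|z|\leq R}\charge_z \;=\; \sum_{\substack{t\in[0,2\pi)\\ V(t)=0,\,U(t)<0}} \sgn\bigl(V'(t)\bigr),
\end{align*}
which recasts the winding number as a signed level-crossing count on the circle, to which the Kac-Rice formalism applies directly.

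Kac-Rice then expresses the second moment as a double integral over the parameterizing interval of the joint density of $(U,V,V')$ at two points, times a bounded conditional expectation. Subtracting the squared mean yields, in arc-length coordinates $s\in[0,2\pi R)$,
\begin{align*}
\mathrm{Var}\Bigl[\textstyle\sum_{|z|\leq R}\charge_z\Bigr] \;=\; \int_0^{2\pi R}\!\!\int_0^{2\pi R} K_R(s,s')\,ds\,ds',
\end{align*}
where $K_R$ is the connected two-point charge density in arc-length. By the decay assumptions \eqref{A6} on $H$, $\mathcal{D}_jH$ and $\mathcal{D}_i\overline{\mathcal{D}_j}H$, the joint Gaussian at arc-length separation $u$ factorizes into a product of marginals plus corrections controlled by these decaying covariances, while $\tilde F_1$ enters only through uniformly bounded one-point densities. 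Consequently $|K_R(s,s')|$ decays in $|s-s'|$ at a rate integrable uniformly in $R\geq 1$ and in $\tilde F_1$, and the double integral is bounded by $2\pi R\cdot\int_{\mathbb{R}}\sup_{R\geq 1}|K_R(0,u)|\,du\leq CR$.

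The main obstacle is the last step: establishing the uniform arc-length decay of $K_R$. This requires expanding the joint two-point Gaussian density of $(U,V,V')$ in powers of the small off-diagonal covariances and verifying, via \eqref{A4p} and the Gaussian regression formula, that the relevant $6\times 6$ conditional covariance matrices have inverses bounded uniformly in $s,s'$. Once this uniform non-degeneracy is secured, both the leading factorization and its remainder are controlled mechanically by the decay of $H$ and its twisted derivatives from \eqref{A6}.
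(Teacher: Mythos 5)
Your reduction to $w=0$ via twisted shifts and the identification of the total charge with the winding number of $t\mapsto F(Re^{it})$ about the origin are both sound and match the paper's starting point (Remark \ref{rem_shift_charge} and Lemma \ref{lem_pi}). From there you genuinely diverge: the paper evaluates the winding number through the covariant logarithmic derivative $\frac{1}{2\pi i}\int_{\partial B}\bigl(\tfrac{\mathcal{D}_1F}{F}\,dz+\tfrac{\mathcal{D}_2F}{F}\,d\bar z\bigr)$ and then bounds covariances of the quotients $\mathcal{D}_jF/F$ (Corollary \ref{coro1}, Proposition \ref{prop_a}), whereas you recast the degree as a signed count of crossings of the negative real ray and propose a one-dimensional Kac--Rice computation on the circle, in the spirit of the Buckley--Feldheim treatment of windings that the paper cites as inspiration. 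That route is admissible in principle, but your plan breaks precisely where the real difficulty of the theorem sits.

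The gap is your assertion that the relevant conditional covariance matrices ``have inverses bounded uniformly in $s,s'$,'' justified by \eqref{A4p} and Gaussian regression. Assumption \eqref{A4p} gives invertibility for $z\neq w$ but no uniform bound: as $s'\to s$ the vector $(U(s),V(s),U(s'),V(s'))$ degenerates, its covariance determinant is comparable to $1-|H(z-w)|^2\to 0$, and the joint density of $(V(s),V(s'))$ at $(0,0)$ blows up at least like $|s-s'|^{-1}$, which is \emph{not} integrable across the diagonal in one dimension. Making $K_R$ integrable there requires two ingredients absent from your plan: (i) the quantitative non-degeneracy $1-|H(z)|^2\geq c|z|^2$ near the origin, which is Proposition \ref{prop_hess} and rests on a non-obvious consequence of positive-definiteness of the twisted kernel (Lemma \ref{lem_hes}) --- there is no spectral measure to read this off from in the twisted setting; and (ii) a Geman-type compensation showing that the conditional expectation of $|V'(s)V'(s')|$ given the double crossing vanishes as $s'\to s$ (compare Step~2 of the proof of Proposition \ref{prop_r}, where a Taylor expansion with integral remainder extracts the needed factor of $|z-w|$). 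The presence of the non-zero mean $\tilde F_1$ in these conditional laws is a further point the paper must handle with rearrangement inequalities (Lemma \ref{lemma_c}, Proposition \ref{prop_a}) and that ``uniformly bounded one-point densities'' does not dispose of. The off-diagonal decay you emphasize is the easier half, and even there the decay furnished by \eqref{A6} is in Euclidean (chord) distance, so one still needs the chord-versus-arc comparison of Lemma \ref{lemma_b} to conclude that $(1+|z-w|)^{-2}$ integrates to $O(1)$ per point of the circle. Finally, ``for almost every $R$'' is weaker than what is claimed; Lemma \ref{lem_pi}(ii) gives the boundary statement for every fixed $R$, so this last point is easily repaired.
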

To compare, under the assumptions of Theorem \ref{th1}, the \emph{expected charge} to be found in a ball $\mathbb{E} [ \sum_{|z-w| \leq R} \charge_z ]$
can be as large as $\approx R^2$. In fact, when $F$ has zero mean, \cite[Theorem 1.12]{hkr22} gives the exact expression
\begin{align}\label{hey}
\mathbb{E} \Big[ \sum_{|z-w| \leq R} \charge_z \Big] = \frac{1}{\pi} R^2.
\end{align}
Thus, Theorem \ref{th1} shows that the fluctuation of
charge at large scales is anomalously small in comparison to Poissonian statistics, where expectation and variance grow at the same asymptotic rate as functions of the observation radius. In the jargon of statistical mechanics, we say that the point process of charged zeros is \emph{hyperuniform}, or that it has \emph{non-extensive fluctuations} \cite{torquato2016hyperuniformity, MR3815253}. Stationary analogues of Theorem \ref{th1} go back to \cite{wilkinson2004screening}, albeit in a less mathematical formulation.

When $F_1=0$ and $H$ is radial, Theorem \ref{th1} follows from \cite[Theorem 1.14]{hkr22}, which also provides an asymptotic expression for the variance as $R \to \infty$. In this article, the variance estimate is extended to possibly non-radial twisted kernels and non-zero means. 
The proof in~\cite{hkr22} depends on lengthy explicit calculations and breaks completely in the presence of a mean or in the absence of radial symmetries. Thus, new methods are needed;
see Section \ref{sec_met}.

\subsection{Non-hyperuniformity of uncharged zeros}\label{sec_intro_th2}
When the GWHF $F$ is a (weighted) \emph{analytic} function, all its zeros have non-negative charge due to conformality, and Theorem \ref{th1} expresses the well-known fact that zeros of a GEF are hyperuniform \cite{MR1383056,MR1690355}---albeit in the novel setting of non-zero means; see also Section \ref{sec_gef}. 
Our second main result concerns a non-analytic GWHF and, in a remarkable contrast to Theorem \ref{th1}, disproves the hyperuniformity of the corresponding zero set when charges are neglected.
\begin{theorem}\label{th2}
Let $F$ be the GWHF \eqref{eq_F} with $F_1 \equiv 0$ and $H(z) = (1-|z|^2)e^{-\abs{z}^2/2}$. Then there exist constants $c,C>0$ such that
\begin{align}\label{eq_intro_cr}
c R^2 
\leq \mathrm{Var} \big[ \# \{z \in \mathbb{C}: F(z)=0, |z| \leq R \} \big] 
= \mathrm{Var} \Big[ \sum_{|z| \leq R} \lvert\charge_z\rvert \Big]
\leq C R^2, \qquad R \geq 1.
\end{align}
\end{theorem}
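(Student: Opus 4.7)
The plan is to analyze $N_R := \#\{z\in\mathbb{C}:F(z)=0,\, |z|\le R\}$ via the Kac--Rice formula for the factorial second moment and to isolate the single scalar obstruction to hyperuniformity. By twisted stationarity, the first intensity $\rho_1(z)\equiv \rho_1$ is constant and the two-point intensity has the form $\rho_2(z,w)=\tilde\rho_2(z-w)$; consequently
\begin{align*}
\mathrm{Var}(N_R) = \rho_1 \pi R^2 + \int_{\mathbb{C}} \bigl|\{|z|\le R\}\cap(u+\{|z|\le R\})\bigr|\,\bigl[\tilde\rho_2(u)-\rho_1^2\bigr]\,du,
\end{align*}
where Kac--Rice gives $\tilde\rho_2(u)=\mathbb{E}\bigl[|\det DF(0)||\det DF(u)| \,\big|\, F(0)=F(u)=0\bigr]\,p_{F(0),F(u)}(0,0)$. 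Everything reduces to showing that the scalar
\begin{align*}
\mathcal{I} := \rho_1 + \int_{\mathbb{C}} \bigl[\tilde\rho_2(u)-\rho_1^2\bigr]\,du
\end{align*}
is finite and strictly positive.

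\textbf{Upper bound.} For $H(z)=(1-|z|^2)e^{-|z|^2/2}$, the joint Gaussian distribution of $(F(0),F(u),DF(0),DF(u))$ has a covariance matrix that is smooth in $u$ and, using \eqref{A4} plus analogous non-degeneracy for derivatives (derivable from the explicit form of $H$), is non-degenerate for $u\ne 0$. Since $H$ and its twisted derivatives decay like $e^{-c|u|^2}$, the conditional expectations and joint densities defining $\tilde\rho_2$ yield $|\tilde\rho_2(u)-\rho_1^2|=O(|u|^{k}e^{-c|u|^2})$, so $\mathcal{I}$ is finite and $\mathrm{Var}(N_R)\le CR^2$ follows immediately.

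\textbf{Lower bound.} This is the decisive step. The charged analogue
\begin{align*}
\tilde\rho_2^{\mathrm{ch}}(u) = \mathbb{E}\bigl[\det DF(0)\,\det DF(u) \,\big|\, F(0)=F(u)=0\bigr]\,p_{F(0),F(u)}(0,0)
\end{align*}
governs $\mathrm{Var}\bigl[\sum_{|z|\le R}\charge_z\bigr]$ through the same asymptotic expansion. Theorem~\ref{th1} forces the sum rule $\rho_1 + \int[\tilde\rho_2^{\mathrm{ch}}(u)-\rho_1^2]\,du=0$ (otherwise the charged variance would be $\asymp R^2$, contradicting hyperuniformity). Subtracting from the definition of $\mathcal{I}$ yields
\begin{align*}
\mathcal{I} = \int_{\mathbb{C}} \mathbb{E}\bigl[ |\det DF(0)\det DF(u)| - \det DF(0)\det DF(u) \,\big|\, F(0)=F(u)=0\bigr]\,p_{F(0),F(u)}(0,0)\,du,
\end{align*}
whose integrand is \emph{pointwise non-negative}. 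To deduce $\mathcal{I}>0$ it suffices to exhibit a set of $u$'s of positive measure for which $\mathbb{P}\bigl[\det DF(0)\det DF(u)<0 \,\big|\, F(0)=F(u)=0\bigr]>0$. Because $F$ is genuinely non-analytic, the real quadratic form $\det DF$ at any single point has a conditional law given $F=0$ that is non-degenerate and puts positive mass on both signs; moreover, as $|u|\to\infty$ the two Jacobians decouple (by \eqref{A4} and the decay from \eqref{A6}), so $\mathbb{P}[\det DF(0)\det DF(u)<0\mid F(0)=F(u)=0]$ converges to $2p(1-p)$ for some $p\in (0,1)$. Combining with the elementary identity $|\{|z|\le R\}\cap(u+\{|z|\le R\})|=\pi R^2-O(|u|R)$ for $|u|\le R$, we obtain $\mathrm{Var}(N_R)=\pi R^2\,\mathcal{I}+O(R)\ge cR^2$ for large $R$.

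\textbf{Main obstacle.} The delicate point is the single-point computation showing that $\det DF(0)$, as a real quadratic form in Gaussians, is not almost surely of one sign conditioned on $F(0)=0$. This requires extracting the covariance structure of $DF(0)$ from the low-order twisted derivatives of $H(z)=(1-|z|^2)e^{-|z|^2/2}$ at the origin and checking that the resulting conditional Gaussian is genuinely non-analytic. The explicit form of $H$, with its sign-changing factor $1-|z|^2$, is exactly what makes this possible, and is the structural feature that distinguishes this model from the GEF case where $\det DF\ge 0$ identically.
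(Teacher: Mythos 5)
Your upper bound is essentially the paper's argument (Kac--Rice plus the Gaussian decay $|\tilde\rho_2(u)-\rho_1^2|\lesssim e^{-c|u|^2}$, which the paper establishes by comparing the conditioned covariance matrix with its decoupled version), and that part is fine. The lower bound, however, has a genuine gap in the ``sum rule'' step. The variance of the charged count is
\begin{align*}
\var\Big[\sum_{|z|\le R}\charge_z\Big]=\rho_1|B_R|+\int_{B_R\times B_R}\big[\tilde\rho_2^{\mathrm{ch}}(z-w)-(\rho_1^{\mathrm{ch}})^2\big]\,dz\,dw,
\end{align*}
where $\rho_1^{\mathrm{ch}}=\E[\det DF(0)\mid F(0)=0]\,p_{F(0)}(0)=1/\pi$ is the \emph{charged} first intensity, not the uncharged one. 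For this non-analytic kernel $\rho_1>\rho_1^{\mathrm{ch}}$ strictly (indeed $\rho_1=\tfrac{5}{3\pi}$), so Theorem \ref{th1} forces $\rho_1+\int[\tilde\rho_2^{\mathrm{ch}}(u)-(\rho_1^{\mathrm{ch}})^2]\,du=0$, not the identity you wrote with $\rho_1^2$. Subtracting the correct sum rule from $\mathcal{I}$ gives
\begin{align*}
\mathcal{I}=\int_{\C}\Big(\big[\tilde\rho_2(u)-\tilde\rho_2^{\mathrm{ch}}(u)\big]-\big[\rho_1^2-(\rho_1^{\mathrm{ch}})^2\big]\Big)\,du,
\end{align*}
and the subtracted constant $\rho_1^2-(\rho_1^{\mathrm{ch}})^2>0$ destroys the pointwise sign of the integrand. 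Your own last paragraph makes the problem visible: since $\mathbb{P}[\det DF(0)\det DF(u)<0\mid\cdots]\to 2p(1-p)>0$ as $|u|\to\infty$, the difference $\tilde\rho_2(u)-\tilde\rho_2^{\mathrm{ch}}(u)$ tends to the positive constant $\rho_1^2-(\rho_1^{\mathrm{ch}})^2$ and is therefore \emph{not integrable}; the representation of $\mathcal{I}$ as $\int[\tilde\rho_2-\tilde\rho_2^{\mathrm{ch}}]$ is a divergent integral, contradicting the finiteness you proved in the upper bound. So positivity of $\mathcal{I}$ cannot be extracted by this soft comparison with the charged statistic.

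The paper avoids this entirely: it proves the lower bound by developing a Wiener chaos expansion of $N(B_R)$ in complex Hermite/Laguerre polynomials of $(F,\mathcal{D}_1F,\mathcal{D}_2F)$, bounding $\var[N(B_R)]\ge\var[Q_{2,2}(N(B_R))]$ by orthogonality, and computing the second-chaos covariance kernel explicitly via Feynman diagrams, obtaining $\var[Q_{2,2}(N(B_R))]=\tfrac{7}{81}R^2+O(R)$. Some genuinely quantitative computation of this kind (equivalently, of the sign of $\int(\delta(u)-[\rho_1^2-(\rho_1^{\mathrm{ch}})^2])\,du$) appears unavoidable; your single-point non-analyticity observation, while correct, only shows that negative charges occur and does not control the sign of $\mathcal{I}$.
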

The technique that we shall develop to prove Theorem \ref{th2} is very general and applies to many other twisted kernels. For example, for kernels of the form $H(z)=P(|z|^2) e^{-|z|^2/2}$, with $P \in \mathbb{C}[z]$, our method gives a sufficient condition for \eqref{eq_intro_cr} in terms of a finite computation with the coefficients of $P$. We accompany the article with a symbolic software notebook \cite{jupy_gwhf_nonhyp} which performs these computations and delivers variations of Theorem \ref{th2}. With it, we have verified the analog of Theorem \ref{th2} for $H(z)=L_k(|z|^2) e^{-|z|^2/2}$ with $k \leq 5$, where $L_k$ denotes the Laguerre polynomial of degree $k$ -- while Theorem \ref{th2}
corresponds to the case $k=1$, since
$L_1(x)=1-x$. The random functions that result from choosing $H$ to be one of the Laguerre polynomials are important in mathematical physics, as they are Gaussian eigenfunctions of the so-called \emph{planar Landau equation} \cite{MR2593994, vas00}, or, alternatively, \emph{random polyanalytic functions of pure type} \cite{haimi2013polyanalytic,hkr22,
haimi2019central}. 

\subsection{Applications to Gaussian entire functions}\label{sec_gef}

Let $G_0$ be the zero-mean random analytic function given by \eqref{eq_g0}. It is well-known that the zero set of $G_0$ is hyperuniform:
\begin{align*}
\var \big[ \# \{z \in\C: G_0(z)=0, \, |z| \leq R \} \big] = O(R), \qquad R \to \infty,
\end{align*}
as follows from an explicit computation of the two-point correlation function \cite{MR1690355, MR1383056} which also delivers the asymptotic limit of the renormalized variance; see also \cite{MR2863379}. As a consequence of Theorem \ref{th1}, we derive a similar conclusion when $G_0$ is supplemented with a non-trivial analytic mean $G_1$
in the so-called Bargmann-Fock spaces of entire functions with quadratic exponential growth \cite{zhu}.
\begin{theorem}[Hyperuniformity of GEF zeros with mean]
\label{th_5}
Let $G=G_0+G_1$, where $G_0$ is the translation-invariant GEF \eqref{eq_g0} and $G_1\colon \C \to \C$ is entire with 
\begin{align}\label{eq_ie}
\sup_{z\in\C} |G_1(z)| e^{-\tfrac{1}{2}\abs{z}^2} < \infty.
\end{align}
Then 
\begin{align}\label{eq_id}
\var \big[ \# \{z \in\C: G(z)=0, \, |z| \leq R \} \big] \leq C R, \qquad R \geq 1,
\end{align}
for a constant $C>0$, which can be chosen to depend only on an upper bound for the left-hand side of \eqref{eq_ie}.
\end{theorem}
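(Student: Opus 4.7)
\textbf{Proof proposal for Theorem \ref{th_5}.}
The plan is to reduce to Theorem~\ref{th1} via the substitution $F(z) := e^{-|z|^2/2} G(z)$, decomposed as $F = F_0 + F_1$ with $F_0(z) = e^{-|z|^2/2} G_0(z)$ and $F_1(z) = e^{-|z|^2/2} G_1(z)$. As recalled in the introduction, $F_0$ is then the GWHF with twisted kernel $H(z) = e^{-|z|^2/2}$, and this kernel satisfies \eqref{A2}--\eqref{A6}. The exponential weight never vanishes, so the zero sets of $F$ and $G$ coincide; and at any zero $z_0$ of $G$, the identities $\bar\partial G(z_0) = 0 = G(z_0)$ give $\bar\partial F(z_0) = 0$ and $\partial F(z_0) = e^{-|z_0|^2/2} G'(z_0)$, whence $\det DF(z_0) = e^{-|z_0|^2}|G'(z_0)|^2 \geq 0$. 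By Lemma~\ref{lem_pi}, almost surely this nonnegative quantity is strictly positive at every zero, so $\charge_{z_0} = +1$, and $\sum_{|z|\leq R}\charge_z = \#\{z \in \C : G(z) = 0,\ |z|\leq R\}$ almost surely.

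Next I would verify \eqref{A1} for $F_1$. The uniform bound on $|F_1|$ is exactly \eqref{eq_ie}. Analyticity of $G_1$ gives $\bar\partial G_1 \equiv 0$, and a direct computation yields $\mathcal{D}_2 F_1 \equiv 0$ and $\mathcal{D}_1 F_1(z) = e^{-|z|^2/2}\bigl(G_1'(z) - \bar z\, G_1(z)\bigr)$. To estimate the latter, I would use the Weyl-Heisenberg translations $(W_w G_1)(z) := G_1(z+w)\, e^{-z\bar w - |w|^2/2}$, which act as isometries on the Bargmann-Fock space $\mathcal{F}^\infty := \{\phi \text{ entire} : \|\phi\|_{\mathcal{F}^\infty} := \sup_z |\phi(z)| e^{-|z|^2/2} < \infty\}$. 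The classical Cauchy estimate, combined with $|\phi(z)| \leq \|\phi\|_{\mathcal{F}^\infty} e^{|z|^2/2}$ and optimization over radii, gives $|\phi'(0)| \leq e^{1/2}\,\|\phi\|_{\mathcal{F}^\infty}$ for every $\phi \in \mathcal{F}^\infty$. Applied to $\phi = W_w G_1$ at $z = 0$, a short calculation of $\partial_z(W_w G_1)(0)$ yields $|\mathcal{D}_1 F_1(w)| = e^{-|w|^2/2}|G_1'(w) - \bar w\, G_1(w)| \leq e^{1/2}\,\|G_1\|_{\mathcal{F}^\infty}$, so \eqref{A1} holds with a constant depending only on $\|G_1\|_{\mathcal{F}^\infty}$.

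With these ingredients, Theorem~\ref{th1} applied at $w = 0$ yields $\var\bigl[\sum_{|z|\leq R}\charge_z\bigr] \leq CR$ for $R \geq 1$, which by the first paragraph is exactly \eqref{eq_id}. I do not expect a serious obstacle: the whole argument is a reduction, and its only analytical input is the standard Fock-space Cauchy estimate. The promised dependence of $C$ only on $\|G_1\|_{\mathcal{F}^\infty}$ requires that the proof of Theorem~\ref{th1} produce a constant depending only on the $L^\infty$ bounds appearing in \eqref{A1}, something to be tracked explicitly in that proof.
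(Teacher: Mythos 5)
Your proposal is correct and follows essentially the same route as the paper: identify $F=e^{-|z|^2/2}G$ as a GWHF with kernel $H(z)=e^{-|z|^2/2}$, verify \eqref{A1} for the mean via Bargmann/Weyl--Heisenberg shifts and a Cauchy estimate (this is the content of the paper's Lemma~\ref{lemma_id}), observe that analyticity forces all charges to equal $+1$, and invoke Theorem~\ref{th1}. Your closing remark about tracking the dependence of the constant is also resolved as you anticipate, since the paper's Theorem~\ref{th1plus} makes the constant model-dependent and the remaining model quantities are fixed once $H(z)=e^{-|z|^2/2}$ is fixed.
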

The strong version of the hyperuniformity of GEF zeros with mean that we present does not seem to follow easily from explicit computations as in \cite{MR1690355, MR1383056}, which rely on special symmetries destroyed by the presence of a mean.

A second set of applications concerns the \emph{covariant derivative}
\begin{align*}
\bar{\partial}^* G(z) = \bar{z} G(z) - \partial G(z)
\end{align*}
of a zero-mean translation-invariant GEF $G=G_0$. The operator $\bar{\partial}^*$ is the adjoint of the Wirtinger derivative $\bar{\partial}$ with respect to the $L^2$-inner product with Gaussian weight $e^{-|z|^2}$. In the language of complex geometry, $\bar{\partial}^* G$ is the derivative of a \emph{holomorphic section} $G$ of the standard line bundle on the plane with Gaussian metric (Hermitian Gaussian measure). The set of critical points $\{\bar{\partial}^* G=0\}$ corresponding to a random section $G$ is instrumental in the analysis of heuristic or approximate models in string theory \cite{MR2104882}. First order statistics for the critical points of $G$ are computed (with respect to more general metrics) in \cite{MR2104882}. We shall look into second order statistics.

A simple computation shows that $F(z)= e^{-|z|^2/2}\, \bar{\partial}^* G(z)$ is a GWHF with twisted kernel $H(z) = (1-|z|^2)e^{-\abs{z}^2/2}$ \cite{hkr22}. Thus Theorem \ref{th2} can be reformulated as follows:
\begin{theorem}[Non-hyperuniformity of critical points of GEF]\label{th_3}
The set of critical points of a zero-mean translation-invariant Gaussian entire function $G$ satisfies
\begin{align*}
c R^2 \leq
\var \big[ \# \{z \in\C: \bar{\partial}^* G(z)=0, \, |z| \leq R \} \big] \leq C R^2, \qquad R \geq 1,
\end{align*}
for adequate constants $c,C>0$.
\end{theorem}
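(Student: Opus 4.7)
The plan is to reduce Theorem~\ref{th_3} directly to Theorem~\ref{th2} via the identification $F(z) := e^{-|z|^2/2}\,\bar{\partial}^* G(z)$ already highlighted in the paper just before the statement: this $F$ is precisely the zero-mean GWHF with twisted kernel $H(z) = (1-|z|^2)\,e^{-|z|^2/2}$ appearing in the hypothesis of Theorem~\ref{th2}. (The fact that $F$ is indeed such a GWHF amounts to a short Wirtinger computation from the covariance $\mathbb{E}[G(z)\overline{G(w)}] = e^{z\bar w}$ and the definition $\bar{\partial}^* G = \bar z G - \partial G$; it is worked out in \cite[Section~6.5]{hkr22} and yields $\mathbb{E}[\bar{\partial}^* G(z)\,\overline{\bar{\partial}^* G(w)}] = (1 - |z-w|^2)\,e^{z\bar w}$, which after multiplication by $e^{-(|z|^2+|w|^2)/2}$ factors as $H(z-w)\,e^{i\,\mathrm{Im}(z\bar w)}$.)

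The first key step is to observe that $e^{-|z|^2/2}$ is strictly positive on $\mathbb{C}$, so the two random zero sets coincide pointwise:
\begin{align*}
\{z\in\mathbb{C}: F(z)=0\} = \{z\in\mathbb{C}: \bar{\partial}^* G(z) = 0\}.
\end{align*}
Consequently, for every $R>0$, the random counting variables $\#\{z: F(z)=0,\ |z|\le R\}$ and $\#\{z: \bar{\partial}^* G(z)=0,\ |z|\le R\}$ are pathwise identical, and hence have identical variance. The second step is then to apply Theorem~\ref{th2} to $F$, obtaining the two-sided bound $cR^2 \le \var[\#\{F=0,\ |z|\le R\}] \le CR^2$ for $R\ge 1$, which immediately transfers to the critical-point count.

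There is essentially no substantive obstacle here: the statement of Theorem~\ref{th_3} has been engineered to be a direct reformulation of Theorem~\ref{th2} under the bijection between critical points of $G$ and zeros of $F$. All the real difficulty—in particular, establishing the order-$R^2$ \emph{lower} bound that rules out hyperuniformity—has been absorbed into the proof of Theorem~\ref{th2} itself, via the general non-hyperuniformity technique announced in Section~\ref{sec_intro_th2}. The only bookkeeping left in the present proof is to confirm that the kernel $H(z) = (1-|z|^2)\,e^{-|z|^2/2}$ satisfies the standing assumptions \eqref{A1}--\eqref{A6}, which is routine: $F_1\equiv 0$ trivially fulfills \eqref{A1}, while $H$ is real-analytic, radial with $H(0)=1$, satisfies $|H(z)|<1$ for $z\neq 0$, and decays together with all its twisted derivatives at Gaussian rate.
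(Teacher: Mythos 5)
Your proposal is correct and is essentially the paper's own proof: the paper likewise identifies $F(z)=e^{-|z|^2/2}\,\bar{\partial}^* G(z)$ as the zero-mean GWHF with twisted kernel $H(z)=(1-|z|^2)e^{-|z|^2/2}$ (delegating the verification of \eqref{A1}--\eqref{A6} to Lemma \ref{lemma_id}) and then invokes Theorem \ref{th2}, the nonvanishing of the Gaussian weight making the two zero sets coincide pathwise. No gaps.
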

 We emphasize that the covariant derivative $\bar{\partial}^* G$ is not analytic, and the study of the second order statistics of its zeros is different from the corresponding endeavor for $G$ \cite{MR1690355,MR2465693}.
 We illustrate the results of Theorems \ref{th_5} and \ref{th_3} in Figure~\ref{fig:iodzeros} by simulations of the index of dispersion (variance of the number of points divided by the mean number of points).
 \begin{figure}
    \centering
    \includegraphics{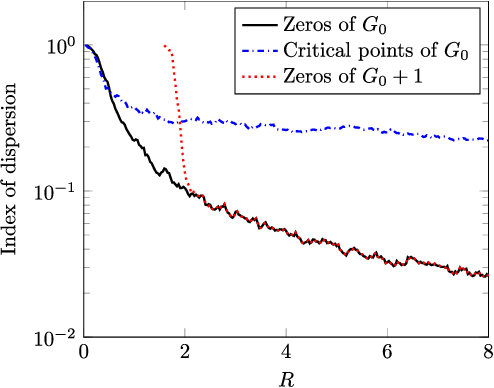}
    \caption{Simulated index of dispersion (variance of the number of points divided by the mean number of points) of the zeros of the GEF $G_0$, the GEF with non-zero mean $G_0+1$, and its critical points in a circle of radius $R$.  For zeros of the GEF (with or without mean), we see a decreasing index of dispersion, while for the critical points, we see that it stays approximately constant showing our proven asymptotic behavior already in a finite domain. The GEF with mean $G_0+1$ has an extremely low probability of having zeros close to the origin and thus no zeros were observed for small $R$ in the simulations.}
    \label{fig:iodzeros}
\end{figure}

The critical points of $G$ (which are almost surely non-degenerate) can be classified according to their topological index (which is almost surely $\pm 1$):
\begin{align*}
&\mathcal{N}_R^+ := \#\{ z \in\C: \bar{\partial}^* G(z)=0, \, |z| \leq R , \mathrm{Index}\big(\bar{\partial}^* G,z\big) = 1 \},
\\
&\mathcal{N}_R^- := \#\{ z \in\C: \bar{\partial}^* G(z)=0, \, |z| \leq R , \mathrm{Index}\big(\bar{\partial}^* G,z\big) = -1 \}.
\end{align*}
The corresponding first order statistics are given in \cite{MR2104882} in the context of general metrics (see also \cite[Section 6.8]{hkr22}). As for second order statistics, we note that the topological index of a critical point of $G$ is exactly the charge of the GWHF $F(z)= e^{-|z|^2/2}\, \bar{\partial}^* G(z)$
(see Section \ref{sec_gef_p}). As a consequence, Theorems \ref{th1} and \ref{th2} can be leveraged
to prove the following companion to Theorem \ref{th_3}.
\begin{theorem}[Non-hyperuniformity of critical points of GEF with given index]\label{th_4}
The set of critical points of a translation-invariant Gaussian entire function $G$ satisfies
\begin{align}\label{eq_ic}
c R^2 \leq
\var \big[ \mathcal{N}_R^\pm \big] \leq C R^2, \qquad R \geq 1,
\end{align}
for adequate constants $c,C>0$.
\end{theorem}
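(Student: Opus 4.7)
The plan is to express $\mathcal{N}_R^\pm$ as a simple linear combination of the charged and uncharged zero counts of the GWHF $F(z) = e^{-|z|^2/2}\,\bar{\partial}^* G(z)$, whose twisted kernel $H(z) = (1-|z|^2)e^{-\abs{z}^2/2}$ is precisely the one covered by Theorem \ref{th2}. Denote $N_R^{\mathrm{ch}} = \sum_{|z|\le R} \kappa_z$ and $N_R^{\mathrm{tot}} = \sum_{|z|\le R} |\kappa_z|$. Since $F$ and $\bar{\partial}^* G$ differ by the strictly positive scalar factor $e^{-|z|^2/2}$, their Jacobians at a common zero have the same sign, so the topological index of a critical point of $G$ coincides with the charge $\kappa_z$ of the corresponding zero of $F$ (this is made explicit in Section \ref{sec_gef_p}). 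Combined with Lemma \ref{lem_pi}, which ensures $\kappa_z\in\{-1,+1\}$ almost surely, we obtain the key identity
\begin{align*}
2\mathcal{N}_R^\pm = N_R^{\mathrm{tot}} \pm N_R^{\mathrm{ch}}.
\end{align*}

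From here, both bounds in \eqref{eq_ic} follow by combining the two main theorems of the paper: Theorem \ref{th1} applied to $F$ yields $\mathrm{Var}[N_R^{\mathrm{ch}}] \le C_1 R$ for $R\ge 1$, while Theorem \ref{th2} gives $c_2 R^2 \le \mathrm{Var}[N_R^{\mathrm{tot}}] \le C_2 R^2$. The upper bound in \eqref{eq_ic} is then immediate from $\mathrm{Var}[A\pm B] \le 2\mathrm{Var}[A] + 2\mathrm{Var}[B]$:
\begin{align*}
\mathrm{Var}[\mathcal{N}_R^\pm] \le \tfrac{1}{2}\bigl(\mathrm{Var}[N_R^{\mathrm{tot}}] + \mathrm{Var}[N_R^{\mathrm{ch}}]\bigr) \le C R^2.
\end{align*}
For the lower bound, I would expand
\begin{align*}
4\mathrm{Var}[\mathcal{N}_R^\pm] = \mathrm{Var}[N_R^{\mathrm{tot}}] + \mathrm{Var}[N_R^{\mathrm{ch}}] \pm 2\,\mathrm{Cov}[N_R^{\mathrm{tot}}, N_R^{\mathrm{ch}}]
\end{align*}
and absorb the cross term via Cauchy--Schwarz:
\begin{align*}
\bigl|\mathrm{Cov}[N_R^{\mathrm{tot}}, N_R^{\mathrm{ch}}]\bigr| \le \sqrt{\mathrm{Var}[N_R^{\mathrm{tot}}]\,\mathrm{Var}[N_R^{\mathrm{ch}}]} \le \sqrt{C_2 R^2 \cdot C_1 R} = O(R^{3/2}).
\end{align*}
Thus $4\mathrm{Var}[\mathcal{N}_R^\pm] \ge c_2 R^2 - C_3 R^{3/2}$, which is at least $(c_2/2) R^2$ for all $R \ge R_0$. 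For $R$ in the compact range $[1,R_0]$, $\mathrm{Var}[\mathcal{N}_R^\pm]$ is strictly positive (the critical point counts are genuinely random), so the inequality extends to all $R\ge 1$ after shrinking the constant.

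There is essentially no hard step here: once Theorems \ref{th1} and \ref{th2} are in hand, the argument is just a two-line linear-algebra manipulation plus Cauchy--Schwarz. The only small point requiring care is the identification of the topological index of a critical point of $G$ with the GWHF charge $\kappa_z$, which reduces to the observation that the multiplicative factor $e^{-|z|^2/2}$ vanishes to zeroth order at any zero and so contributes only a positive scalar to the Jacobian determinant, preserving its sign.
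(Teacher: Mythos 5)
Your proposal is correct and follows essentially the same route as the paper: identify the index of a critical point with the charge of the GWHF $F(z)=e^{-|z|^2/2}\bar{\partial}^*G(z)$, write $2\mathcal{N}_R^{\pm}=(\mathcal{N}_R^++\mathcal{N}_R^-)\pm(\mathcal{N}_R^+-\mathcal{N}_R^-)$, and combine Theorems \ref{th1} and \ref{th2} (the paper absorbs the cross term with the elementary bound $\var[2A-B]\le 8\var[A]+2\var[B]$ rather than Cauchy--Schwarz, which is immaterial). The one point you assert without proof --- strict positivity of $\var[\mathcal{N}_R^{\pm}]$ on the compact range $[1,R_0]$ --- is exactly what the paper establishes via Lemma \ref{lemma_R} (a stationary point process with positive finite first intensity cannot have deterministic counts), so you should cite or reproduce that argument rather than appeal to the counts being ``genuinely random.''
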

Theorems \ref{th_3} and \ref{th_4} can also be formulated in terms of the \emph{weighted magnitude}
\begin{align*}
A(z) = e^{-\tfrac{1}{2}\abs{z}^2} |G(z)|
\end{align*}
and the statistics of its local extrema (local maxima, local minima and saddle points); see Section \ref{sec_wgef}.

\subsection{Applications in statistical signal processing}\label{sec_intro_tf}

Our main motivation comes from certain recent developments in the field of signal processing. The goal is to analyze a distribution $f \in \mathcal{S}'(\mathbb{R})$---called signal---by means of its \emph{spectrogram}
\begin{align}
S f(x,\xi) := \left| 
\int_{\mathbb{R}} f(t) e^{-2\pi i t \xi} e^{-\pi (t-x)^2}\, dt
\right|^2, \qquad (x,\xi) \in \mathbb{R}^2.
\end{align}
The integral, which is to be interpreted distributionally, quantifies the influence of the frequency $\xi$ in $f(t)$ near $t=x$.

In realistic applications, the signal is \emph{non-stationary}, which means that all frequencies $\xi$ can potentially contribute to the spectrogram, and therefore the shape of the essential support of $Sf$ (that is, the region where most of the energy concentrates) is rather unpredictable. Nevertheless, in practice,
$S f$ is expected to be concentrated on a thin or small-measure set which carries most of the information (time-frequency sparsity), a fact that can be computationally exploited \cite{MR3078284}.

The estimation of the essential support of $Sf$ is traditionally done by looking into large values of $Sf$, a task that can be challenging if $f$ is contaminated with significant additive noise~\cite{flandrin2018explorations}. Remarkably, it has been recently discovered that in the additive noise regime the \emph{zeros} of $Sf$ provide a rich set of landmarks from which the essential support of $Sf$ can be effectively inferred. The intuition is that the zeros of the spectrogram of (white) noise $\mathcal{W}$ behave like charged particles, and thus form a rather rigid random pattern with predictable statistics \cite{gardner2006sparse}, from which the presence of an underlying signal can be recognized as a salient local perturbation \cite{7180335, 7869100}.
In practice, the number of zeros of $Sf$ is computed on reasonably sized test disks, which are classified as ``meaningful" if the zero count deviates significantly from what is expected for the spectrogram of noise $S\mathcal{W}$. The union of all meaningful disks furnishes an approximation of the essential support of $Sf$.\footnote{This is a very simplified description; practical algorithms are of course much more refined and combine various approaches \cite{MR4047541, bh, MIRAMONT2024109250, miramont2024benchmarking, pascal2024point}.}

The advantages of spectrogram zeros (``silent points") over other filtering landmarks such as local maxima (``loud points") has been the object of significant numerical investigations \cite{miramont:hal-04102094,miramont2024benchmarking,pascal2024point}. The success of zero-based spectrogram filtering depends crucially on the reliability of empirical statistics, that is, on the desirable property that zero counts computed with a signal impacted by a concrete realization of noise reflect ensemble averages if calculated on sufficiently large observation disks. As it turns out, the \emph{short-time Fourier transform} (STFT)
\begin{align}\label{eq_stft_gauss}
V f(x,\xi) = \int_{\mathbb{R}} f(t) e^{-2\pi i t \xi} e^{-\pi (t-x)^2}\, dt, \qquad (x,\xi) \in \mathbb{R}^2,
\end{align}
associated with standard complex white noise $f=\mathcal{W}$ as input can be identified with the translation-invariant Gaussian entire function $G_0$ \eqref{eq_g0}:
\begin{align}\label{eq_connection}
e^{-\tfrac{1}{2}\abs{z}^2} G_0(z) = 2^{1/4} e^{-i x \xi} \cdot V\, \mathcal{W} (x/\sqrt{\pi},-\xi/\sqrt{\pi}), 
\qquad z=x+i\xi,
\end{align}
see \cite{MR4047541, bh, MR1662451}. Hence, the zeros of the spectrogram of noise $S \mathcal{W}=|V \mathcal{W}|^2$ are hyperuniform (moderate variance in comparison to its expectation at large scales). This gives \emph{partial support} to the success of empirical zero statistics for $S (f+\mathcal{W})$: 
If the observed number of zeros deviates significantly from what is expected for the spectrogram of  noise $S (\mathcal{W})$, we can conclude with high probability that a signal is present. 
Furthermore, a formula for the expected number of zeros of $S (f+\mathcal{W})$ is known \cite[Proposition 3.4]{efkr24}, which suggests that in parts of the time-frequency plane  where the deterministic signal $f$ has low energy it does not significantly change the expected number of zeros. 
However, the effect of deterministic signals on the variance of the number of spectrogram zeros remained so far unknown.
Thus, the hypothesis that the disks that pass the statistical zero counting test (meaningful disks) are likely to be descriptive of the underlying signal $f$ has not been  well founded.

As an application of our main results, we contribute to the analysis of non-stationary signal processing as follows.

\begin{theorem}[Hyperuniformity of zeros of spectrograms, and lack thereof for local maxima]
\label{th_sp}
Let $f \in \mathcal{S}'(\mathbb{R})$ be a distribution with bounded spectrogram $Sf$
and let $\mathcal{W}$ be standard complex white noise. Then
\begin{itemize}
\item[(i)] The zero set $Z_{f+ \mathcal{W}}$ of $S(f+\mathcal{W})$ is hyperuniform. More precisely
\begin{align}\label{eq_vs}
\sup_{(x,\xi) \in \mathbb{R}^2}
\var [\# Z_{f+\mathcal{W}} \cap B_R(x,\xi)] \lesssim R, \qquad R \geq 1.
\end{align}
\item[(ii)] The set $M_{\mathcal{W}}$ of local maxima of $S\,\mathcal{W}$ is not hyperuniform. More precisely,
\begin{align}\label{eq_vs_2}
\var [\# M_{\mathcal{W}} \cap B_R(0,0)] \asymp R^2, \qquad \mbox{as }R \to \infty.
\end{align}
\end{itemize}
\end{theorem}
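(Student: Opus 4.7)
The plan is to deduce both parts from Theorems \ref{th1} and \ref{th_4} via the Bargmann identification of the STFT with entire functions. After the affine change of variables $z = \sqrt{\pi}(x - i\xi)$ and a phase correction, $e^{-|z|^2/2} G(z)$ equals (up to a unimodular constant) $V(f+\mathcal{W})(x,\xi)$, where $G = G_0 + G_1$, $G_0$ is the GEF \eqref{eq_g0}, and $G_1$ is the entire function associated to $Vf$. Hence $F := e^{-|z|^2/2} G$ is a GWHF with twisted kernel $H(z) = e^{-|z|^2/2}$; its zero set is in orientation-preserving bijection with the zero set of $S(f+\mathcal{W})$, and critical points of $|F|^2$ correspond bijectively to critical points of $S\mathcal{W}$.

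For part (i), hypotheses \eqref{A2}--\eqref{A6} are elementary for the Gaussian kernel $H$. The sup-bound on $F_1 = e^{-|z|^2/2} G_1$ follows from the assumption that $Sf = |Vf|^2$ is bounded; applying Cauchy's formula on the unit disc to the auxiliary entire function $\tilde G(z) := G_1(z) e^{-z \bar{z_0}}$ at $z = z_0$ yields a uniform bound $\sup_{z_0} e^{-|z_0|^2/2} |\partial G_1(z_0) - \bar{z_0} G_1(z_0)| < \infty$, i.e., $\sup_z |\mathcal{D}_1 F_1(z)| < \infty$. Together with $\mathcal{D}_2 F_1 \equiv 0$ (which holds because $G_1$ is entire), this establishes \eqref{A1}. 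Since $\mathcal{D}_2 F_0 \equiv 0$ as well, we obtain $\bar\partial F(z) \equiv -\tfrac{z}{2} F(z)$, which vanishes at every zero of $F$; consequently $\det DF = |\partial F|^2 - |\bar\partial F|^2 = |\partial F|^2$ at zeros, and $\charge_z = +1$ almost surely at each zero. Hence the uncharged zero count equals the charged count, and Theorem \ref{th1} yields \eqref{eq_vs} after undoing the change of variables.

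For part (ii), take $f \equiv 0$ so that $G = G_0$, and consider $A(z)^2 = e^{-|z|^2}|G_0(z)|^2$. A direct computation gives $\partial A^2 = -e^{-|z|^2} \overline{G_0} \cdot \bar{\partial}^* G_0$, so critical points of $A^2$ coincide almost surely with zeros of the GWHF $e^{-|z|^2/2}\bar{\partial}^* G_0$ studied in Theorem \ref{th_4}. The identity $\Delta \log A = -2$ (valid away from zeros of $G_0$) makes $\log A$ strictly superharmonic, so its critical points are either local maxima or non-degenerate saddles, and $A$ has no local minima. Differentiating the identity above at a critical point $z_0$, the prefactor survives while its own derivative contribution vanishes, and comparing the Jacobian of the $\mathbb{R}^2 \to \mathbb{R}^2$ map $z \mapsto \partial A^2(z)$ with the Hessian of $A^2$ gives $\charge_{z_0} = -\sgn \det \mathrm{Hess}(A^2)(z_0)$. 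Consequently, local maxima correspond exactly to the class $\charge_{z_0} = -1$, so $\#(M_{\mathcal{W}} \cap B_R(0,0)) = \mathcal{N}^-_{\sqrt{\pi} R}$ and Theorem \ref{th_4} delivers \eqref{eq_vs_2}.

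The main technical obstacle is the verification of \eqref{A1} in part (i): upgrading a sup-bound on $F_1$ to a sup-bound on its twisted derivative. For a generic smooth bounded function this implication would fail, but the Bargmann--entire structure enables the Cauchy-type estimate sketched above, which is essentially a reformulation of the well-known boundedness of the annihilation operator on the Bargmann--Fock space $\mathcal{F}^\infty$. The sign bookkeeping in part (ii) is routine, and in any event either charge class yields the same conclusion through Theorem \ref{th_4}.
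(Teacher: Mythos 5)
Your proposal is correct and follows essentially the same route as the paper: part (i) is the Bargmann identification of $V(f+\mathcal{W})$ with a GWHF/GEF with bounded analytic mean, the observation that analyticity forces all charges to equal $+1$, and an appeal to Theorem \ref{th1}; part (ii) is the identification of local maxima of $S\mathcal{W}$ with the index $-1$ critical points $\mathcal{N}_R^-$ (via the factorization $\partial A^2=-e^{-|z|^2}\overline{G_0}\,\bar\partial^*G_0$ and the superharmonicity argument excluding local minima), followed by Theorem \ref{th_4} — exactly as in Section \ref{sec_wgef} and the paper's proof. The only cosmetic difference is that you verify \eqref{A1} by a Cauchy estimate in the Bargmann--Fock space (the paper's Lemma \ref{lemma_id}/Theorem \ref{th_5} route), whereas the paper's proof of Theorem \ref{th_sp} invokes Lemma \ref{lem_stft_gwhf}, which checks \eqref{A1} for general windows via the modulation-space norm equivalence; both are valid here.
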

Part (i) of Theorem \ref{th_sp} concerns the success of empirically computed statistics for spectrogram zeros of an \emph{arbitrary signal} impacted by additive white noise and completes the heuristic explanation of the success of zero-based filtering by showing that disks that are not significant for the underlying clean signal are likely to fail the zero-counting statistical test (non-meaningful disks). 
Furthermore, novel tests based on spectrogram zeros that aim at specifically distinguishing between signals within a given class can be expected to perform very well due to the hyperuniformity of the zero count for each signal contaminated with noise.

Part (ii) of Theorem \ref{th_sp} provides, for the first time, statistical support for the superiority of zeros over local maxima as filtering landmarks \cite{7180335, 7869100,flandrin2018explorations}, because it shows that statistics computed with the latter suffer from much larger fluctuations in areas of the time-frequency plane dominated by noise. Theorem \ref{th_sp} also reveals a fundamental difference between the point process of spectrogram maxima and the heuristic proxy model used so far in signal processing, which approximately describes spectrogram maxima as a perturbed lattice \cite{flmax}: while perturbed lattices are hyperuniform \cite{MR4226523,MR4083986}, spectrogram maxima turn out not to be so, and, as a consequence, certain signal processing heuristics \cite{flmax} are inaccurate at large scales.

As for the class of signals $f$ covered by Theorem \ref{th_sp}, we mention that the assumption that their spectrograms are bounded is standard. Indeed, it means that $f$ belongs to the \emph{modulation space} $M^\infty(\mathbb{R})$ \cite{MR201959,benyimodulation}, which includes all distributions commonly used in signal processing. The bounded spectrogram assumption is also fair from the point of view of modeling, whereas, if we were to assume that $Sf$ decays, the proof of \eqref{eq_vs} could potentially be easier, as one could ignore the influence of the mean at very large scales. (In the same vein, one can most likely extend part (ii) to signals $f+\mathcal{W}$ where $Sf$ is assumed to decay at infinity, but in the end those details have not seemed interesting enough for this article.)

Finally, we comment on further applications to time-frequency analysis. The STFT of a signal $f$ is defined more generally with respect to a \emph{window function} $g \in \mathcal{S}(\mathbb{R})$:
\begin{align}
\label{eq_stft}
V_g f(x,y) = \int_{\mathbb{R}} f(t) \overline{g(t-x)} e^{-2\pi i t y} dt,
\qquad (x,y) \in \mathbb{R}^2.
\end{align}
The most common choice for $g$ is the Gaussian function, which corresponds to \eqref{eq_stft_gauss}. In practice, the same signal is often processed with multiple windows $g$ so as to average out the bias that they introduce (multi-tapering) \cite[Chapter 10]{flandrin2018explorations}. Typical choices for $g$ are Hermite functions $\{h_n: n \geq 0\}$ as these optimize several measures related to Heisenberg’s uncertainty principle.

For the Gaussian window $g=h_0$, the STFT $V_g f$ of a general signal $f$ is a weighted analytic function. While this is not the case for any other choice of $g$ (up to symmetries) \cite{MR2729877}, the STFT of a signal $f$ impacted by standard complex white noise $\mathcal{W}$ can still be identified with a GWHF as follows:
\begin{align*}
F(x+iy) := e^{-i xy} \cdot V_g \, (f + \mathcal{W}) \big(x/\sqrt{\pi}, -y/\sqrt{\pi}\big),
\end{align*}
see \cite{hkr22}.
Our work shows that the zero counting statistics for the STFT with respect to a general window $g$ may fail to share the statistical advantages found in the Gaussian window case, as it implies that the zeros of the STFT of complex white noise calculated with respect to the Hermite function $h_1$ are \emph{not} hyperuniform.

\begin{theorem}\label{th_sp_2}
Let $h_1(t) := t e^{-\pi t^2}$ and $\mathcal{W}$ standard complex white noise. Then the zero set $Z_{h_1, \mathcal{W}}$ of $V_{h_1} \mathcal{W}$ satisfies
\begin{align*}
\var [\# Z_{h_1, \mathcal{W}} \cap B_R(0,0)] \asymp R^2, \qquad \mbox{as }R \to \infty.
\end{align*}
\end{theorem}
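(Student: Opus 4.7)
My plan is to reduce Theorem \ref{th_sp_2} directly to Theorem \ref{th2} by recognizing that, up to a standard change of variables and phase correction, $V_{h_1}\mathcal{W}$ is precisely the zero-mean GWHF with twisted kernel $H(z)=(1-|z|^2)e^{-|z|^2/2}$ (the Laguerre case $k=1$ singled out in Section \ref{sec_intro_th2}). Since $H$ is exactly the kernel appearing in Theorem \ref{th2}, the non-hyperuniformity for the STFT zeros follows without any new probabilistic input.

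First, I would set
\[
F(x+iy) := e^{-ixy}\cdot V_{h_1}\mathcal{W}\bigl(x/\sqrt{\pi}, -y/\sqrt{\pi}\bigr)
\]
and compute its covariance. Because $\mathcal{W}$ is standard complex white noise,
\[
\mathbb{E}\bigl[V_{h_1}\mathcal{W}(x,y)\overline{V_{h_1}\mathcal{W}(x',y')}\bigr] = \bigl\langle \pi_{(x',y')} h_1,\,\pi_{(x,y)} h_1 \bigr\rangle,
\]
where $\pi_{(x,y)}$ denotes the time-frequency shift by $(x,y)$. The classical STFT-of-Hermite-functions identity (equivalently, the explicit form of the reproducing kernels of true poly-Fock spaces) expresses this inner product in terms of the Laguerre polynomial $L_1(s)=1-s$ times a Gaussian factor and a symplectic phase. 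After the rescaling $(x,y)\mapsto(x/\sqrt{\pi},-y/\sqrt{\pi})$ and the modulation by $e^{-ixy}$, the symplectic phase recombines into the twist $e^{i\Im(z\bar w)}$ of \eqref{eq_cov}, and the remaining factor is exactly $H(z-w)=(1-|z-w|^2)e^{-|z-w|^2/2}$. This computation is essentially carried out in \cite{hkr22}, so I would only need to track constants carefully.

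Second, since the change of variables $(x,y)\mapsto (x/\sqrt{\pi},-y/\sqrt{\pi})$ is a composition of a rescaling by $1/\sqrt{\pi}$ with a reflection, and $e^{-ixy}$ never vanishes, the zeros of $V_{h_1}\mathcal{W}$ in $B_R(0,0)\subset\mathbb{R}^2$ are in bijection with the zeros of $F$ in the complex disk $\{|z|\leq R\sqrt{\pi}\}$. By Lemma \ref{lem_pi} every such zero satisfies $|\charge_z|=1$ almost surely, hence
\[
\#Z_{h_1,\mathcal{W}}\cap B_R(0,0) \;=\; \sum_{|z|\leq R\sqrt{\pi}} |\charge_z|.
\]

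Third, I would apply Theorem \ref{th2} with observation radius $R\sqrt{\pi}$ to conclude
\[
c\,\pi R^2 \;\leq\; \var\bigl[\#Z_{h_1,\mathcal{W}}\cap B_R(0,0)\bigr] \;\leq\; C\,\pi R^2, \qquad R\geq 1,
\]
which is precisely $\var[\#Z_{h_1,\mathcal{W}}\cap B_R(0,0)]\asymp R^2$ as claimed. The only step with any content is the first one, and even there the obstacle is notational: correctly matching the Schr\"odinger-type phase arising from time-frequency shifts of $h_1$ with the twist factor in \eqref{eq_cov}. Once that is done, Theorem \ref{th_sp_2} is an immediate corollary of Theorem \ref{th2}.
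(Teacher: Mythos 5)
Your proposal is correct and follows essentially the same route as the paper: identify $e^{-ixy}V_{h_1}\mathcal{W}(\bar z/\sqrt{\pi})$ with the zero-mean GWHF whose twisted kernel is $(1-|z|^2)e^{-|z|^2/2}$ (via the Hermite--Laguerre connection) and then quote Theorem \ref{th2}. The only detail to add is that $h_1$ must be normalized in $L^2$ (the paper takes $g=\|h_1\|_2^{-1}h_1$) so that \eqref{A3} holds and the kernel is exactly $H$ rather than a constant multiple of it; this does not change the zero set, so your conclusion stands.
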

On the bright side, we are able to show that, for a general window function $g$, \emph{charge statistics}, corresponding to sums of winding numbers of the STFT:
\begin{align}\label{eq_newcharge}
\newcharge_{z} f= \sgn \Big\{\Im  \Big[  \partial_x V_g f(x,y) \cdot \overline{ \partial_y V_g f(x,y)} \,\Big] \Big\}, \qquad z=x+iy,
\end{align}
do exhibit moderate fluctuations at large scales in the presence of noise, and thus offer an attractive novel alternative to zero-based filtering.
\begin{theorem}\label{th_sp_3}
Let $g \in \mathcal{S}(\mathbb{R})$ be non-zero, $f \in M^\infty(\mathbb{R})$ (i.e., $V_gf \in L^\infty(\mathbb{R})$), and $\mathcal{W}$ standard complex white noise. Then
	\begin{align*}
	\var  \Big[
	{\sum}_{|z| \leq R,\, V_g\, (f+\mathcal{W}) (z) = 0} \,\,\newcharge_z (f+\mathcal{W}) \Big]\leq C_g R, \qquad R \geq 1,
	\end{align*}
 for a constant $C_g$.
\end{theorem}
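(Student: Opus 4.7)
The plan is to reduce Theorem~\ref{th_sp_3} to Theorem~\ref{th1} via the GWHF identification recalled in Section~\ref{sec_intro_tf}. After replacing $g$ by $g/\|g\|_2$ (which rescales $V_g$ by a positive factor and affects neither zeros nor charges), we may assume $\|g\|_2=1$ and set
\begin{align*}
F(x+iy) := e^{-ixy} \cdot V_g(f+\mathcal{W})\bigl(x/\sqrt{\pi},-y/\sqrt{\pi}\bigr), \qquad F = F_0+F_1,
\end{align*}
where $F_1(x+iy) = e^{-ixy} V_g f(x/\sqrt{\pi},-y/\sqrt{\pi})$ is the deterministic mean and $F_0$ is the twisted stationary centered Gaussian field coming from $V_g\mathcal{W}$. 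Its twisted kernel $H$ is, up to the rescaling, the ambiguity function $V_g g$, as shown in \cite{hkr22}.

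The next step is to verify the hypotheses \eqref{A1}--\eqref{A6} for this $F$. The normalization $H(0)=\|g\|_2^2=1$ gives \eqref{A3}, and positive-definiteness \eqref{A2} is automatic. Since $g\in\mathcal{S}(\mathbb{R})$ implies $V_g g\in\mathcal{S}(\mathbb{R}^2)$, the smoothness \eqref{A5} and the polynomial decay \eqref{A6} of $H$ and its twisted derivatives both follow. Cauchy--Schwarz gives $|H(z)|\leq 1$, and equality at some $z\neq 0$ would force $g$ to be an $L^2$-eigenvector of the nonzero time-frequency shift $\pi(z)$, which is excluded by irreducibility of the Schr\"odinger representation; this yields \eqref{A4}. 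Finally, $\|F_1\|_\infty=\|V_g f\|_\infty<\infty$ by the very definition of $f\in M^\infty$, and the twisted derivatives $\mathcal{D}_j F_1$ reduce, after the unimodular twist and the rescaling, to STFTs of $f$ with other Schwartz windows obtained from $g$ by differentiation and multiplication by polynomials; these are again bounded by the window-independence of $M^\infty$, giving \eqref{A1}.

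Now I would match charges. At any zero of $F$, the unimodular factor $e^{-ixy}$ contributes a real rotation matrix (determinant $+1$) to the differential, while the diagonal rescaling $(x,y)\mapsto(x/\sqrt{\pi},-y/\sqrt{\pi})$ has Jacobian determinant $-1/\pi$. A short computation from \eqref{eq_charge} and \eqref{eq_newcharge} shows that these sign conventions combine to give exactly $\charge_w = \newcharge_z(f+\mathcal{W})$ whenever the STFT point $z$ corresponds to the GWHF point $w=\sqrt{\pi}\bar z$. Since $|w|=\sqrt{\pi}|z|$, the observation disk $|z|\leq R$ maps bijectively onto $|w|\leq\sqrt{\pi}R$, so Theorem~\ref{th1} applied to $F$ with radius $\sqrt{\pi}R\geq 1$ yields
\begin{align*}
\var\Bigl[\sum_{|z|\leq R,\,V_g(f+\mathcal{W})(z)=0}\newcharge_z(f+\mathcal{W})\Bigr]
&= \var\Bigl[\sum_{|w|\leq\sqrt{\pi}R,\,F(w)=0}\charge_w\Bigr] \\
&\leq C\sqrt{\pi}R, \qquad R\geq 1,
\end{align*}
which is the desired estimate with $C_g=C\sqrt{\pi}$.

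The main obstacle will be the clean verification of \eqref{A1}: tracking how the twisted derivatives $\mathcal{D}_j$ interact with both the unimodular twist $e^{-ixy}$ and the diagonal rescaling, and expressing the outcome as an STFT of $f$ with a concrete Schwartz window to which the window-independence of $M^\infty$ can be applied. The remaining conditions are either routine (Schwartz properties of $V_g g$) or follow from standard representation-theoretic facts (for \eqref{A4}).
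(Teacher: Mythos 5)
Your proposal is correct and follows essentially the same route as the paper: normalize $g$, identify $e^{-ixy}V_g(f+\mathcal{W})(\bar z/\sqrt{\pi})$ with a GWHF satisfying \eqref{A1}--\eqref{A6} (this is exactly the content of the paper's Lemma~\ref{lem_stft_gwhf}, including your argument that $\mathcal{D}_jF_1$ becomes an STFT of $f$ with the windows $g'$ and $tg(t)$, handled via \eqref{eq_normsminf}), match $\charge$ with $\newcharge$ by tracking the signs introduced by the unimodular factor and the reflection–rescaling, and apply Theorem~\ref{th1}. The only cosmetic difference is that the paper verifies the charge identity by computing $\partial_xF$ and $\partial_yF$ at a zero rather than by factoring Jacobian determinants, and defers \eqref{A4} to the zero-mean case treated in \cite{hkr22} rather than invoking irreducibility directly.
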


\subsection{Methods and related literature}\label{sec_met}
The model of twisted stationary Gaussian fields was introduced in \cite{hkr22}, where first and second order statistics for zero sets were derived by means of Kac-Rice formulae and laborious explicit computations, and under zero-mean and strong symmetry assumptions (such as radiality). The results that we present here seem to be out of the scope of such direct methods.

To prove Theorems \ref{th1} and \ref{th2}, the primary challenge lies in adapting techniques for stationary random fields to the case of twisted stationary random functions. A main insight that we systematically exploit is that in the twisted setting the differential operators $\mathcal{D}_1, \mathcal{D}_2$ play a role that is analogous to that of the Euclidean derivatives in the classical stationary context. Besides that basic common element, the proofs of Theorems \ref{th1} and \ref{th2} are based on rather independent techniques.

To prove Theorem \ref{th1}, we draw inspiration from \cite{buckley2017fluctuations, feldheim2018variance, buckley2018winding}, which study, respectively, the argument change of Gaussian entire functions along curves, the variance of analytic stationary random fields, and the winding of complex-valued stationary random fields. As a first step, we express the total charge of the zero set of a GWHF $F$ using \emph{Poincar\'e's index formula}
\[
\sum_{z \in B, F(z)=0} \kappa_z= \frac{1}{2\pi i} \int_{\partial B} \frac{dF}{F},
\]
which we suitably modify to make it covariant under twisted shifts (see Lemma \ref{lem_pi}). This leads us to the task of analyzing correlations among quotients of the form $\mathcal{D}_j F/F$. While in the stationary setting this can be done by means of rather explicit formulae that go back to Kahane \cite{MR0833073}, in the twisted setting we need to derive new estimates. These necessitate arguments that are subtler than those used for related purposes in \cite{feldheim2018variance}, which, for example, break down in the presence of a non-zero mean. Another challenge is that there is no simple description of the positive-definiteness of the twisted kernel \eqref{eq_cov} -- in contrast to the Euclidean stationary situation, where covariance kernels are characterized by the positivity of their Fourier transforms. This leads us to indirect arguments that exploit the positive-definiteness of \eqref{eq_cov} to analyze the Hessian of the squared twisted kernel $|H|^2$.

Our second main result, Theorem \ref{th2}, is proved by developing a \emph{chaos expansion} \cite{janson1997gaussian} of the zero counting statistic, a technique that goes back to \cite{MR1440400, MR1084335, MR1303648} in the stationary setting, and has been successfully applied in several contexts; see, e.g., \cite{dalmao2019phase, MR2257657,nourdin2019nodal}. In the twisted stationary setting, we develop an expansion adapted to the twisted derivatives $\mathcal{D}_1$ and $\mathcal{D}_2$. In Theorem \ref{th_chaos} below, we obtain a chaos expansion for the zero statistic of a GWHF with general radial twisted kernel. With the chaos expansion at hand, we then deduce the non-hyperuniformity of the zero set in question by explicitly estimating the projection of the zero count statistic into the so-called second order chaotic subspace. The corresponding computations rely on the so-called \emph{Feynman diagram method} \cite{janson1997gaussian}, which has also been used for related purposes in, e.g.,  ~\cite{buckley2022gaussian, buckley2017fluctuations, sodin2004random}. 

As a central technical step towards the chaos expansion, we prove a uniform upper bound for the two-point function at non-zero levels, $\{F=u\}$, (Proposition \ref{prop_r}) -- a conclusion that in the stationary case follows by simply inspecting the spectral measure,
see, e.g., \cite{MR0358956}. As a consequence of the uniform bound on the two-point function, we conclude in Theorem \ref{th_coro_r} that the first and second moments of $u$-level counting statistics are continuous in the level $u$, just as is the case in the stationary setting \cite{MR1440400}. We view this as a result of independent interest, and note that the proof technique is general enough to be applicable beyond the setting of Gaussian Weyl-Heisenberg functions.  
We expect that the chaos expansion will be useful in further applications, such as proving a CLT (see \cite{azais2023winding} in the stationary setting). Although we develop the chaos expansion for uncharged zeros, we expect the same to apply with charged zeros (and to be, in fact, technically easier). 

\subsection{Organization}
Section \ref{sec_pre} introduces the notation and relates properties of the covariance kernel \eqref{eq_cov} to pointwise properties of the twisted kernel $H$. Section \ref{sec_poincare} briefly discusses Poincar\'e's index formula and reformulates it in terms of twisted derivatives. Section \ref{sec_cov} obtains several correlation estimates related to the terms in Poincar\'e's formula, which are instrumental in studying winding numbers. Section \ref{sec_var}
contains a proof of Theorem \ref{th1}, in fact in a slightly more quantitative form (Theorem \ref{th1plus}).

Section \ref{sec_reg} investigates non-zero level crossings $\{F=u\}$ and their dependence on $u$. This lays the technical foundation for Section \ref{sec_chaos}, which derives the chaos expansion of the zero counting statistic (Theorem \ref{th_chaos}). This is done for general radial twisted kernels $H$. The approach can be extended to non-radial $H$ at the cost of additional technicalities, which in the end did not seem to merit inclusion in this article.
In Section \ref{sec_nonhyper}, we prove Theorem \ref{th2} by means of explicit computations with the so-called Feynman diagrams corresponding to the chaos expansion of the zero statistic and the particular twisted kernel $H(z)=(1-|z|^2) e^{-|z|^2/2}$. This requires certain algebraic computations with Laguerre polynomials, which are presented as an appendix (Section \ref{appendix}), and can also be followed with a symbolic software notebook publicly available at \cite{jupy_gwhf_nonhyp}. The notebook
also delivers the calculations that are relevant to establish analogues of Theorem \ref{th2} with respect to twisted kernels of the form $H(z) = P(|z|^2) e^{-|z|^2/2}$, $P \in \mathbb{C}[z]$, which are also of interest, cf.\ Section \ref{sec_intro_th2}.

Finally, Section \ref{sec_app} provides detailed arguments for the applications described in Section \ref{sec_gef} (Gaussian entire functions) and Section \ref{sec_intro_tf} (time-frequency analysis). In particular, Theorems \ref{th_5}, \ref{th_3}, and \ref{th_4} are proved in Section \ref{sec_gef_p}, while Section \ref{sec_wgef} discusses a reformulation of these in terms of the classification of critical points of weighted entire functions; and Theorem \ref{th_sp},  \ref{th_sp_2}, and \ref{th_sp_3} are proved in Section \ref{sec_app_tf}. Section \ref{sec_aux} contains auxiliary results.

\section{Preliminaries on Twisted Stationarity}\label{sec_pre}
\subsection{Notation}
The indicator function of a set $E$ is denoted $1_E$.
We adopt the usual complex-variable notation $dz= dx + i dy$,
$d\bar{z}= dx - i dy$, $|dz|= \sqrt{(dx)^2 + (dy)^2}$, while
the differential of the Lebesgue (area) measure on $\mathbb{C}$ is denoted $dA$. For short, we sometimes write $dA(z,w)$ for $dA(z)dA(w)$. We will write $\Delta=\partial \bar \partial$, which is one quarter of the standard Laplace operator.  

The \emph{Jacobian} of a function $F\colon \mathbb{C} \to \mathbb{C}$ at $z \in \mathbb{C}$ is the determinant of its differential matrix $DF$ considered as $F\colon \mathbb{R}^2 \to \mathbb{R}^2$:
\begin{align*}
	\jac F(z) := \det DF(z).
\end{align*}
Recall that the charge of $F$ at a zero $z$ is given by \eqref{eq_charge}. The \emph{total charge} of $F$ on a domain $\Omega \subset \mathbb{C}$ is defined as
\begin{align}
\label{eq:chargeomega}
	\charge_{\Omega} := \sum_{z \in \Omega, F(z)=0} \charge_z.
\end{align}
We will also make extensive use of the covariant differential operators \eqref{eq_D} and the twisted shifts $\mathcal{T}_{w}$ \eqref{eq_ts}. If the operators $\mathcal{D}_j$ are applied to a function of more than one variable, we specify the relevant variable in a second subscript, e.g., $\mathcal{D}_{j,z}F(w,z)$.

\subsection{Covariances}
In what follows, we will often need the covariance between $F(z)$ and $\mathcal{D}_jF(z)$, $j=1,2$, at different points $z$. The following lemma expresses these in terms of twisted shifts.
\begin{lemma}\label{lem_precov}
Let $F$ be the GWHF \eqref{eq_F} with twisted kernel $H$ and assume \eqref{A2} and \eqref{A5}. 
Then the following hold.
\begin{align}\label{eq_cov_spec}
\mathbb{E} \big[ F_0(z) \cdot \overline{F_0(w)} \big] &=
\mathcal{T}_{w}H(z) , \qquad z,w\in\mathbb{C}.
\\
\label{eq_cov_spec_diff}
\mathbb{E} \big[ \mathcal{D}_jF_0(z) \cdot \overline{F_0(w)} \big] &=
\mathcal{D}_j\mathcal{T}_{w}H(z) = \mathcal{T}_w\mathcal{D}_jH(z) , \qquad z,w\in\mathbb{C},\qquad j\in \{1,2\},
\\
\label{eq_cov_spec_diff2}
\mathbb{E} \big[ \mathcal{D}_jF_0(z) \cdot \overline{\mathcal{D}_kF_0(w)} \big] 
& = - \mathcal{T}_{w}\mathcal{D}_{j}\overline{\mathcal{D}_{k}}H(z) 
, \qquad z,w\in\mathbb{C}, \qquad j,k\in \{1,2\}.
\end{align}
\end{lemma}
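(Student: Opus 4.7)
My plan is to establish the three identities in sequence by moving the relevant differential operators through the expectation, using the smoothness hypothesis \eqref{A5} to justify the interchange, and then reducing everything to algebraic relations between the twisted shifts $\mathcal{T}_w$ and the twisted derivatives $\mathcal{D}_j$. Identity \eqref{eq_cov_spec} is a mere rewrite of the defining covariance \eqref{eq_cov}: by the definition of $\mathcal{T}_w$ in \eqref{eq_ts}, one has $\mathcal{T}_w H(z) = H(z-w) e^{i \Im(z \bar w)}$, which is exactly the right-hand side of \eqref{eq_cov}.

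For identity \eqref{eq_cov_spec_diff}, my strategy is to commute $\mathcal{D}_{j,z}$ past the expectation. Since \eqref{A5} guarantees almost sure $C^2$ paths for $F_0$ and $C^2$ regularity of the covariance, a standard dominated convergence argument on difference quotients justifies passing $\mathcal{D}_{j,z}$ inside, yielding $\mathbb{E}[\mathcal{D}_j F_0(z) \overline{F_0(w)}] = \mathcal{D}_{j,z}\, \mathcal{T}_w H(z)$. The further equality $\mathcal{D}_j \mathcal{T}_w H = \mathcal{T}_w \mathcal{D}_j H$ is then exactly the commutation relation \eqref{eq_commute}.

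The main work lies in identity \eqref{eq_cov_spec_diff2}. First, I observe that complex conjugation transforms $\mathcal{D}_k$ into the operator $\overline{\mathcal{D}_k}$ obtained by swapping $\partial \leftrightarrow \bar\partial$ and $\bar z \leftrightarrow z$, as used in the statement. Applying $\overline{\partial f} = \bar\partial \bar f$, one has, for instance, $\overline{\mathcal{D}_1 F_0(w)} = \bar\partial_w \overline{F_0(w)} - \tfrac{w}{2}\overline{F_0(w)}$, which is precisely $\overline{\mathcal{D}_k}$ (with $k=1$) acting in the $w$-variable on $\overline{F_0(w)}$; the case $k=2$ is analogous. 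Writing $\overline{\mathcal{D}_{k,w}}$ for $\overline{\mathcal{D}_k}$ acting in the $w$-variable, and after moving both derivatives past the expectation (again via \eqref{A5}), the claim reduces to the analytic identity
\[
\mathcal{D}_{j,z}\, \overline{\mathcal{D}_{k,w}}\, \mathcal{T}_w H(z) = -\mathcal{T}_w\, \mathcal{D}_j\, \overline{\mathcal{D}_k}\, H(z).
\]
The crux is the intertwining relation $\overline{\mathcal{D}_{k,w}}\, \mathcal{T}_w H(z) = -\mathcal{T}_w\, \overline{\mathcal{D}_k}\, H(z)$, which I intend to verify by a short direct calculation on $\mathcal{T}_w H(z) = H(z-w) e^{\frac{1}{2}(z\bar w - \bar z w)}$: the chain rule generates the minus sign via $\bar\partial_w H(z-w) = -(\bar\partial H)(z-w)$, while the $w$-derivative of the twisted exponential combines with the coefficient term in $\overline{\mathcal{D}_{k,w}}$ to reconstitute the matching coefficient of $\overline{\mathcal{D}_k} H$ evaluated at $z-w$. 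The outer operator $\mathcal{D}_{j,z}$ then commutes through $\mathcal{T}_w$ by \eqref{eq_commute}, finishing the reduction. The main obstacle, such as it is, lies in the sign bookkeeping of this intertwining step; once it is set up, the rest of the argument is mechanical.
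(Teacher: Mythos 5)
Your proposal is correct and follows essentially the same route as the paper: rewrite \eqref{eq_cov} via $\mathcal{T}_w$, interchange $\mathcal{D}_j$ with the expectation using the smoothness/separability from \eqref{A5} together with \eqref{eq_commute}, and for \eqref{eq_cov_spec_diff2} reduce to the intertwining relation $\overline{\mathcal{D}_{k,w}}\,\mathcal{T}_w H(z) = -\mathcal{T}_w\,\overline{\mathcal{D}_{k,z}}H(z)$, which is exactly the identity the paper invokes. Your sketch of the sign bookkeeping in that relation is accurate, so nothing essential is missing.
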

\begin{proof}
The expression \eqref{eq_cov_spec} is clear from the definitions, while \eqref{eq_cov_spec_diff} follows from \eqref{eq_cov_spec}  and \eqref{eq_commute} after exchanging
expectation with the operators $\mathcal{D}_j$.
This is allowed because the covariance of $F$ is smooth and $F$ is separable \cite[Chapter 1]{level}. Similarly, to prove \eqref{eq_cov_spec_diff2} we can further rewrite 
\begin{align*}
\mathbb{E} \big[ \mathcal{D}_jF_0(z) \cdot \overline{\mathcal{D}_kF_0(w)} \big] 
& = \mathcal{D}_{j,z}\overline{\mathcal{D}_{k,w}}\mathcal{T}_{w}H(z) 
\notag  \\
& = - \mathcal{D}_{j,z}\mathcal{T}_{w}\overline{\mathcal{D}_{k,z}}H(z) 
\notag  \\
& = - \mathcal{T}_{w}\mathcal{D}_{j,z}\overline{\mathcal{D}_{k,z}}H(z) 
\notag  \\
& = - \mathcal{T}_{w}\mathcal{D}_{j}\overline{\mathcal{D}_{k}}H(z) 
, \qquad z,w\in\mathbb{C},
\end{align*}
where we used that $\overline{\mathcal{D}_{k,w}}\mathcal{T}_{w}H(z) = -\mathcal{T}_{w}\overline{\mathcal{D}_{k,z}}H(z) $.
\end{proof}

\subsection{Invertibility of the Hessian of the squared twisted kernel}
We will show that our assumptions on $H$ imply that there exists a positive constant $c$ such that  
\[
1-|H(z)|^2 \geq c|z|^2
\]
for $z$ in a neighborhood of the origin. The proof requires the following two lemmas. 
\begin{lemma} \label{lem_covariance}
Let $F$ be the GWHF \eqref{eq_F} and satisfy \eqref{A1}, \ldots, \eqref{A6} and $z \in \C$.
Then the 
covariance matrix of the Gaussian vector $\big(F(z), \mathcal{D}_1 F(z), \mathcal{D}_2F(z)\big)$ is 
given by 
\begin{equation} \label{eq:cov_matrix}
A=\begin{pmatrix} 
1 & -\bar \partial H(0) & - \partial H(0) \\
 \partial H(0) & -\Delta H(0)+\frac12 & -\partial^2 H(0)  \\
 \bar \partial H(0) & -\bar \partial^2 H(0) & -\Delta H(0)- \frac 12 
\end{pmatrix}.
\end{equation}
\end{lemma}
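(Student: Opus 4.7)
The plan is to reduce everything to Lemma \ref{lem_precov} specialized at $w=z$, and then perform a short explicit expansion of the twisted derivatives of $H$ at the origin. Since $F_1$ is deterministic, the covariance of $(F(z),\mathcal{D}_1F(z),\mathcal{D}_2F(z))$ equals that of $(F_0(z),\mathcal{D}_1F_0(z),\mathcal{D}_2F_0(z))$. Observing that for any smooth $g$ one has $\mathcal{T}_zg(z)=g(0)e^{i\Im(z\bar z)}=g(0)$, the identities \eqref{eq_cov_spec}, \eqref{eq_cov_spec_diff}, \eqref{eq_cov_spec_diff2} collapse to
\begin{align*}
\mathbb{E}\bigl[F_0(z)\overline{F_0(z)}\bigr]&=H(0)=1,\\
\mathbb{E}\bigl[\mathcal{D}_jF_0(z)\overline{F_0(z)}\bigr]&=\mathcal{D}_jH(0),\\
\mathbb{E}\bigl[\mathcal{D}_jF_0(z)\overline{\mathcal{D}_kF_0(z)}\bigr]&=-\mathcal{D}_j\overline{\mathcal{D}_k}H(0),
\end{align*}
which in particular shows that $A$ is independent of $z$, as befits a twisted stationary process.

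Next I would read off the first row and column. Since $\bar z$ and $z$ vanish at the origin, $\mathcal{D}_1H(0)=\partial H(0)$ and $\mathcal{D}_2H(0)=\bar\partial H(0)$, giving the entries $A_{10}$ and $A_{20}$. For $A_{01}$ and $A_{02}$ I would take complex conjugates and use the symmetry \eqref{eq_symm}, $H(-z)=\overline{H(z)}$, which upon differentiation at the origin yields $\overline{\partial H(0)}=-\bar\partial H(0)$ and $\overline{\bar\partial H(0)}=-\partial H(0)$. This produces the $-\bar\partial H(0)$ and $-\partial H(0)$ entries in the top row of $A$.

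For the bottom-right $2\times 2$ block I would interpret $\overline{\mathcal{D}_k}$ as the formal complex conjugate of the operator $\mathcal{D}_k$, so that $\overline{\mathcal{D}_1}=\bar\partial-\tfrac{z}{2}$ and $\overline{\mathcal{D}_2}=\partial+\tfrac{\bar z}{2}$, and then expand $\mathcal{D}_j\overline{\mathcal{D}_k}$ as a second-order differential operator with polynomial coefficients. Using $\partial z=\bar\partial\bar z=1$ and $\partial\bar z=\bar\partial z=0$ together with the product rule, two kinds of simplification occur when one evaluates at $0$: the first-order cross terms carrying a factor of $z$ or $\bar z$ vanish outright, while a commutator-type term coming from the derivative hitting the coefficient $\tfrac{z}{2}$ or $\tfrac{\bar z}{2}$ contributes $\pm\tfrac{1}{2}H(0)=\pm\tfrac12$. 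A direct check gives
\begin{align*}
\mathcal{D}_1\overline{\mathcal{D}_1}H(0)=\Delta H(0)-\tfrac12,\qquad \mathcal{D}_2\overline{\mathcal{D}_2}H(0)=\Delta H(0)+\tfrac12,\\
\mathcal{D}_1\overline{\mathcal{D}_2}H(0)=\partial^2H(0),\qquad \mathcal{D}_2\overline{\mathcal{D}_1}H(0)=\bar\partial^2H(0),
\end{align*}
and multiplying by $-1$ yields exactly the lower right block of \eqref{eq:cov_matrix}.

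There is no substantive obstacle; the whole argument is a one-page bookkeeping exercise once Lemma \ref{lem_precov} is available. The only places where care is needed are (i) keeping the convention $\overline{\mathcal{D}_k}=$ (operator with conjugated coefficients) consistent with the derivation of \eqref{eq_cov_spec_diff2}, and (ii) tracking the sign and the $\pm\tfrac12$ shift produced when the Wirtinger derivative in $\mathcal{D}_j$ falls on the coefficient of $\overline{\mathcal{D}_k}$, since this is the sole mechanism by which the constants $+\tfrac12$ and $-\tfrac12$ appearing on the diagonal of $A$ are generated.
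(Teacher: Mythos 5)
Your proposal is correct and follows essentially the same route as the paper: specialize Lemma \ref{lem_precov} at $w=z$ (equivalently, reduce to $z=0$ via $\mathcal{T}_z G(z)=G(0)$), expand the operators $\mathcal{D}_j\overline{\mathcal{D}_k}$ at the origin to extract the $\pm\tfrac12$ terms, and fill in the remaining entries by Hermitian symmetry together with \eqref{eq_symm}. All the signs and the convention for $\overline{\mathcal{D}_k}$ match the paper's computation.
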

\begin{proof}
By Lemma \ref{lem_precov} and because $\mathcal{T}_{z}G(z)=G(0)$ for $G\in \{F, \mathcal{D}_j F, \mathcal{D}_j\overline{\mathcal{D}_k}F \}$, we can assume that $z=0$. We have by \eqref{eq_symm}, \eqref{A3},
\eqref{eq_cov_spec},  \eqref{eq_cov_spec_diff}, 
and \eqref{eq_cov_spec_diff2} 
that
\begin{align*}
\E \big[ F_0(0) \overline{F_0(0)}\big] &= 1,\\
\E \big[ \mathcal{D}_1F_0(0) \overline{F_0(0)}\big]&=\partial H(0), \\
\E \big[ \mathcal{D}_2F_0(0) \overline{F_0(0)}\big] &=\bar \partial H(0), \\
\E \big[ \mathcal{D}_1F_0(0) \overline{\mathcal{D}_1 F_0(0)}\big] =
-\mathcal{D}_1 \overline{\mathcal{D}_1}H(0) &=
-(\partial- \bar z/2)(\bar \partial - z/2) H(z)_{|z=0}= -\Delta H(0) + \frac12, 
\\
\E \big[ \mathcal{D}_2F_0(0) \overline{\mathcal{D}_2 F_0(0)}\big] 
=
-\mathcal{D}_2 \overline{\mathcal{D}_2}H(0) 
&=
-(\bar \partial+ z/2)(\partial + \bar z/2) H(z)_{|z=0}= -\Delta H(0)- \frac12, 
\\
\E \big[ \mathcal{D}_1F_0(0) \overline{\mathcal{D}_2 F_0(0)}\big] 
=
-\mathcal{D}_1 \overline{\mathcal{D}_2}H(0) 
&=
-(\partial - \bar z/2)( \partial + \bar z/2) H(z)_{|z=0}
= -\partial^2 H(0).
\end{align*}
The remaining entries of $A$ can be computed using Hermitian symmetry and \eqref{eq_symm}.
\end{proof}

\begin{lemma} \label{lem_hes}
Let $F$ be the GWHF \eqref{eq_F} and satisfy \eqref{A1}, \ldots, \eqref{A6} and let $A$ be given by \eqref{eq:cov_matrix}.
Then
$$
\det A = -\frac14 \det \begin{pmatrix} \partial^2 |H|^2(0) & \Delta |H|^2(0) \\
\Delta |H|^2(0) & \bar \partial^2 |H|^2(0)
\end{pmatrix} - \frac14.
$$
In addition, because $A$ is positive semi-definite, it follows that 
$$\det \begin{pmatrix} \partial^2 |H|^2(0) & \Delta |H|^2(0) \\
\Delta |H|^2(0) & \bar \partial^2 |H|^2(0)
\end{pmatrix} \leq -1.  $$
\end{lemma}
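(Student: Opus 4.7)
\textbf{Proof plan for Lemma \ref{lem_hes}.}
My plan is to reduce the identity to a straightforward algebraic expansion of both $3\times 3$ and $2\times 2$ determinants, after first identifying the symmetries that the hypothesis \eqref{eq_symm} imposes on the derivatives of $H$ at the origin. To this end, I set $a=\partial H(0)$, $b=\bar\partial H(0)$, $d=\Delta H(0)$, $p=\partial^{2}H(0)$, $q=\bar\partial^{2}H(0)$ and observe, by expanding $H$ as a formal Taylor series around $0$ and matching coefficients in $H(-z)=\overline{H(z)}$, that $\bar a=-b$, $\bar p=q$, and $d\in\mathbb{R}$. With these identities the matrix $A$ in \eqref{eq:cov_matrix} is encoded in the three parameters $a,d,p$ alone.

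The main step is then direct computation. First I expand $\det A$ along the first row; organizing the resulting nine terms and using $q=\bar p$, $b=-\bar a$ collapses several cancellations (the two terms $\pm ab/2$ cancel) and yields
\begin{align*}
\det A = d^{2}+2d|a|^{2}-|p|^{2}+p\bar a^{2}+\bar p a^{2}-\tfrac14.
\end{align*}
Second, I compute the derivatives of $|H|^{2}=H\bar H$ at the origin via the Leibniz rule, using the elementary identities $\partial\bar H=\overline{\bar\partial H}$ and $\bar\partial\bar H=\overline{\partial H}$: this yields $\partial^{2}|H|^{2}(0)=2(p-a^{2})$, $\bar\partial^{2}|H|^{2}(0)=2(\bar p-\bar a^{2})$, and $\Delta|H|^{2}(0)=2(d+|a|^{2})$. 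Plugging these into the $2\times 2$ Hessian-like matrix of the statement gives
\begin{align*}
\tfrac14 \det\begin{pmatrix}\partial^{2}|H|^{2}(0)&\Delta|H|^{2}(0)\\ \Delta|H|^{2}(0)&\bar\partial^{2}|H|^{2}(0)\end{pmatrix}=|p-a^{2}|^{2}-(d+|a|^{2})^{2}.
\end{align*}
Expanding $|p-a^{2}|^{2}$ and $(d+|a|^{2})^{2}$ and comparing with the expression above for $\det A$ yields the claimed identity; the quartic terms $|a|^{4}$ cancel.

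For the inequality, I invoke Lemma \ref{lem_covariance}: $A$ is the covariance matrix of the Gaussian vector $\big(F(z),\mathcal{D}_{1}F(z),\mathcal{D}_{2}F(z)\big)$, hence positive semi-definite, so $\det A\ge 0$. Combining this with the identity just established immediately gives $\det M\le -1$ for the $2\times 2$ matrix $M$ in the statement. The only real potential pitfall I foresee is keeping the Wirtinger bookkeeping consistent (in particular the signs arising from \eqref{eq_symm} after differentiation and the sign in \eqref{eq_cov_spec_diff2}); everything else is routine expansion.
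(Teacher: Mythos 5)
Your proposal is correct and follows essentially the same route as the paper: express both $\det A$ and the second derivatives of $|H|^2$ at the origin in terms of $\partial H(0)$, $\partial^2 H(0)$, $\Delta H(0)$ using the symmetry $H(-z)=\overline{H(z)}$, and compare (the paper computes $\det A$ by Gaussian elimination rather than cofactor expansion, but this is immaterial). All of your intermediate identities, including the cancellations you flag, check out.
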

\begin{proof}
We will use repeatedly the relations $H(0)=1$, $H(-z)=\overline{H(z)}$ in the process. The second identity implies for example
\begin{align*}
\partial H(0)&= -\partial \bar H(0), \\
\partial^2 H(0)&= \partial^2 \bar H(0).
\end{align*}
Similar relations hold for other derivatives of first and second orders.

Gaussian elimination gives
\begin{align*}
\det A %
&= \det \begin{pmatrix} -\Delta H(0)+ \frac12 + \bar \partial H(0) \cdot \partial H(0) & - \partial^2 H(0) + (\partial H(0))^2 \\
-\bar \partial^2 H(0)+ (\bar \partial H(0))^2 & -\Delta H(0) -\frac12 + \partial H(0) \cdot \bar \partial H(0) \end{pmatrix} \\
&= \big(-\Delta H(0) + \bar \partial H(0) \cdot \partial H(0) \big)^2- \frac{1}{4} - 
\bigl|-\partial^2 H(0) + (\partial H(0))^2\bigr|^2.
\end{align*}
Next, we will compute the determinant of the $2 \times 2$ matrix in the statement of the lemma.   We have 
\begin{align*}
\partial^2 |H|^2(0) &= \partial (\partial H(z) \cdot \bar H(z)+ H(z) \cdot \partial \bar H(z))_{z=0} \\
&=\partial^2 H(0)-(\partial H(0))^2 - (\partial H(0))^2+ \partial^2 H(0)= 2(\partial^2 H(0)-(\partial H(0))^2 ),
\\
\bar \partial^2 |H|^2(0) &= 2(\bar \partial^2 H(0)-(\bar \partial H(0))^2 ),
\\
\Delta |H|^2(0) &= 
\bar \partial (\partial H(z) \cdot \bar H(z)+ H(z) \cdot \partial \bar H(z))_{z=0}= 2 (\Delta H(0)+ |\partial H(0)|^2),
\end{align*}
so 
\begin{align*}
\partial^2 |H|^2(0) \cdot \bar \partial^2 |H|^2(0) - (\Delta |H|^2(0))^2 %
&= 4|\partial^2 H(0)-(\partial H(0))^2|^2-
4(\Delta H(0)+|\partial H(0)|^2)^2. %
\end{align*}
We obtain the claim by comparing this expression with the expression for $\det A$.
\end{proof}

\begin{prop} \label{prop_hess}
Let $F$ be the GWHF \eqref{eq_F} and satisfy \eqref{A1}, \ldots, \eqref{A6}. 
There exists $c>0$ such that 
\begin{align*}
1-|H(z)|^2 \geq c |z|^2,
\end{align*}
for $z$ in an adequate neighborhood of the origin. 
\end{prop}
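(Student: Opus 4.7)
The plan is to show that $g(z) := 1 - |H(z)|^2$ has a strict local (in fact global) minimum at the origin with positive definite Hessian, and then conclude by a second-order Taylor expansion. First I would observe that $g \ge 0$ on $\mathbb{C}$: Cauchy--Schwarz applied to the covariance \eqref{eq_cov} gives $|H(z)|^2 = |\mathbb{E}[F_0(z)\overline{F_0(0)}]|^2 \le \mathbb{E}|F_0(z)|^2 \cdot \mathbb{E}|F_0(0)|^2 = 1$, and then \eqref{A3} together with \eqref{A4} make $z=0$ the strict global minimum of the $C^2$ real-valued function $g$. In particular all first-order partial derivatives of $g$ (equivalently of $|H|^2$) vanish at $0$, so the second-order Taylor expansion reads
\begin{align*}
g(z) = Q(z) + o(|z|^2), \qquad z \to 0,
\end{align*}
where
\begin{align*}
Q(z) = -\Delta |H|^2(0)\, |z|^2 - \tfrac12 \partial^2 |H|^2(0)\, z^2 - \tfrac12 \bar\partial^2 |H|^2(0)\, \bar z^2.
\end{align*}

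The crux is to show that $Q$ is strictly positive definite. Setting $a := \Delta|H|^2(0) \in \mathbb{R}$ and $b := \partial^2|H|^2(0) \in \mathbb{C}$, expanding $z = x + i y$ and using $\bar\partial^2|H|^2(0) = \bar b$ (since $|H|^2$ is real), $Q$ is represented in the coordinates $(x,y)$ by the symmetric matrix
\begin{align*}
M = \begin{pmatrix} -a - \mathrm{Re}\, b & \mathrm{Im}\, b \\ \mathrm{Im}\, b & -a + \mathrm{Re}\, b \end{pmatrix},
\end{align*}
with $\mathrm{tr}\, M = -2a$ and $\det M = a^2 - |b|^2$. Lemma \ref{lem_hes} supplies $|b|^2 - a^2 \le -1$, i.e.\ $\det M \ge 1$. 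Since $|H|^2$ attains its global maximum at $0$, the standard second-derivative test yields $\Delta|H|^2(0) \le 0$, so $a \le 0$; combined with $a^2 \ge 1$ this forces $a \le -1$ and hence $\mathrm{tr}\, M \ge 2$. Both eigenvalues of $M$ are therefore strictly positive, and there exists $c_0 > 0$ with $Q(z) \ge c_0 |z|^2$ for all $z \in \mathbb{C}$.

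With positive definiteness secured, the conclusion is immediate: choose $\delta > 0$ so that $|g(z) - Q(z)| \le (c_0/2)|z|^2$ whenever $|z| \le \delta$; then $g(z) \ge (c_0/2)|z|^2$ on the disk of radius $\delta$, proving the claim with $c := c_0/2$. The only nontrivial step is the positive definiteness of $Q$, which in turn rests on Lemma \ref{lem_hes} and hence ultimately on the positive semi-definiteness of the covariance matrix $A$ from Lemma \ref{lem_covariance}; all other pieces are direct consequences of the assumptions \eqref{A2}--\eqref{A5}.
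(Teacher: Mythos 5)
Your proof is correct and follows essentially the same route as the paper: a second-order Taylor expansion of $|H|^2$ at the origin, negative semi-definiteness of the quadratic form from the maximum at $0$, and Lemma \ref{lem_hes} to upgrade to strict definiteness. The only (cosmetic) difference is that you verify positive definiteness of $Q$ via the trace and determinant of the representing real $2\times 2$ matrix, while the paper runs a short contradiction argument in complex notation; both rest on the same inequality $\lvert \partial^2|H|^2(0)\rvert^2-(\Delta|H|^2(0))^2\le -1$.
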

\begin{proof}
By \eqref{A4}, $|H|^2$ attains a maximum at $0$, and thus can be Taylor expanded near the origin as
$$
|H(z)|^2 = 1+ \frac12 \Big( 2 \mathrm{Re} \big[\partial^2 |H|^2(0) z^2 \big]+
2 \Delta |H|^2(0)|z|^2 \Big)+ O(|z|^3),
$$
with the quadratic form being necessarily negative semi-definite. It is enough to show that the form is in fact strictly negative definite. Suppose on the contrary that there exists $z \neq 0$ such that 
\begin{equation*} %
 2 \mathrm{Re} \left(\partial^2 |H|^2(0) z^2  +
 \Delta |H|^2(0)|z|^2 \right) = 0. 
\end{equation*}
This implies that there is a real number $r$ such that 
$$
\partial^2 |H|^2(0) z^2  +
 \Delta |H|^2(0) |z|^2 =  ir. 
$$
This means
$$
\big\lvert \partial^2 |H|^2(0)\big\rvert^2  \lvert z\rvert^4  
=
r^2 + \big\lvert \Delta |H|^2(0) \big\rvert^2 |z|^4,
$$
which implies that 
$$
\big\lvert \partial^2 |H|^2(0)\big\rvert^2- \big\lvert\Delta |H|^2(0)\big\rvert^2 - \frac{r^2}{|z|^4} = 0\,.
$$
However, the left-hand side is $<0$ by Lemma \ref{lem_hes}, a contradiction.
\end{proof}
Proposition \ref{prop_hess} and Assumptions \eqref{A4} and \eqref{A6} imply that 
\begin{align} \label{eq_H_lower_bound}
\inf_{z \in \C \setminus \{0\}} \frac{1-|H(z)|^2}{\min\{1, |z|^2\}} > 0.
\end{align}

\subsection{Model-dependent constants}\label{sec_mod_dep}
We say that $F$ is a GWHF satisfying the \emph{general assumptions} if $F$ is given by \eqref{eq_F} and satisfies \eqref{A1}, \ldots, \eqref{A6}. Many of the results below contain constants that depend on these assumptions. To be more quantitative, we say that a constant \emph{depends on the model} if it can be specified as a function of the following:
\begin{itemize}
\item An upper bound for the left-hand side of \eqref{A1}.
\item A lower bound for the left-hand side of \eqref{eq_H_lower_bound}. 
\item An upper bound for the left-hand side of \eqref{A6}.
\item A lower bound for the smallest \emph{positive} eigenvalue of the covariance matrices of \[(F(0), \mathcal{D}_j F(0)), \quad j=1,2.\]
\end{itemize}

Importantly, estimates for GWHF formulated in terms of model-dependent constants remain valid if $F$ is replaced with the twisted shift 
$\mathcal{T}_{\xi}F(z)= F(z-\xi)e^{i \mathrm{Im}(z \bar \xi)}$. The corresponding zero sets are related as follows.

\begin{rem}\label{rem_shift_charge}
If $F: \mathbb{C} \to \mathbb{C}$ vanishes at $z \in \mathbb{C}$ then a direct computation shows that $\mathcal{T}_{\xi} F$ vanishes at $z+\xi$ and
\[\charge_{z+\xi} (\mathcal{T}_{\xi} F) = \charge_z (F).\]
\end{rem}

\section{Poincare Index}\label{sec_poincare}
We consider a GWHF $F$ and the 
total charge $\charge_{\Omega}$ \eqref{eq:chargeomega} of $F$ on a domain
$\Omega \subset \C$.
As a first step towards the proof of the hyperuniformity of the charge statistics, we derive a variant of Poincare's index formula adapted to the twisted derivatives.
\begin{lemma}\label{lem_pi}
	Let $F$ be a GWHF satisfying the general assumptions, let $\Omega \subset \mathbb{C}$ be a  compact domain with smooth boundary, and let $u \in \mathbb{C}$. Then the following hold almost surely.
	\begin{itemize}
		\item[(i)] 
  The level-crossings of $F$ are non-degenerate, i.e., 
		\begin{align}\label{eq_nd}
			\jac F(z) \not=0, \qquad z \in \{F=u\}\cap \Omega.
		\end{align}		
		\item[(ii)] $\{F=0\} \cap \Omega$ is finite and does not intersect $\partial \Omega$.
		\item[(iii)] (Covariant Poincare's index formula)
		\begin{align}\label{eq_charge2}
			\charge_{\Omega} = \frac{1}{\pi} |\Omega| + \frac{1}{2\pi i} \int_{\partial \Omega} \left(\frac{\mathcal{D}_1 F }{F} \,dz + \frac{\mathcal{D}_2 F }{F} \,d\bar{z}\right).
		\end{align}
	\end{itemize}
\end{lemma}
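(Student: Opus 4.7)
The plan is to establish (i) and (ii) via Bulinskaya-type arguments using non-degeneracy of the relevant Gaussian joint densities, and then derive (iii) from the classical Poincar\'e index formula combined with the twisted-derivative identities $\partial F = \mathcal{D}_1 F + \tfrac{\bar z}{2}F$ and $\bar\partial F = \mathcal{D}_2 F - \tfrac{z}{2}F$.

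For (i), I consider the $C^1$ random map $Y(z) := (F(z)-u, \jac F(z))$ from $\mathbb{R}^2$ to $\mathbb{R}^3$ and apply Bulinskaya's lemma: since the target dimension strictly exceeds the source dimension, $Y$ has no zero in the compact set $\Omega$ almost surely, provided the density of $Y(z)$ near $0$ is uniformly bounded in $z\in\Omega$. Using the identity $\jac F = |\partial F|^2 - |\bar\partial F|^2$ (a direct calculation for $F\colon\mathbb{R}^2\to\mathbb{R}^2$) together with the twisted-derivative rewriting, $\jac F(z)$ becomes an indefinite quadratic form in the Gaussian vector $(F(z),\mathcal{D}_1 F(z),\mathcal{D}_2 F(z))$ whose structure is supplied by Lemma \ref{lem_covariance}. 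Conditioning on $F(z)=u$ leaves $(\mathcal{D}_1 F(z),\mathcal{D}_2 F(z))$ as a non-degenerate four-dimensional real Gaussian (non-degeneracy coming from the model-dependent eigenvalue bound of Section \ref{sec_mod_dep}), and $\jac F(z)$ reads as a signature-$(2,2)$ quadratic form of such a vector, which enjoys a bounded density near $0$. Uniformity in $z$ follows from twisted shifts (Remark \ref{rem_shift_charge}) and assumption \eqref{A1} controlling $F_1$ and its twisted derivatives. For (ii), the same Bulinskaya argument applied to $F\colon\partial\Omega\to\mathbb{C}$ (source dimension $1$, target dimension $2$) gives $F\neq 0$ on $\partial\Omega$ almost surely; combined with (i) and the implicit function theorem, the zeros are isolated, hence finite by compactness of $\Omega$.

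For (iii), on the almost-sure event that (i) and (ii) hold, the classical Poincar\'e index formula for $C^1$ maps asserts
\[
\charge_{\Omega} = \frac{1}{2\pi i}\int_{\partial\Omega}\frac{dF}{F} = \frac{1}{2\pi i}\int_{\partial\Omega}\left(\frac{\partial F}{F}\,dz + \frac{\bar\partial F}{F}\,d\bar z\right).
\]
Substituting the twisted-derivative identities splits the right-hand side into the desired covariant integral plus the residual $\tfrac{1}{4\pi i}\int_{\partial\Omega}(\bar z\,dz - z\,d\bar z)$. A direct calculation gives $\bar z\,dz - z\,d\bar z = 2i(x\,dy - y\,dx)$, and Green's theorem evaluates $\int_{\partial\Omega}(x\,dy - y\,dx) = 2|\Omega|$, producing the leading area term $|\Omega|/\pi$.

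The main obstacle is securing the uniform density bound in (i), because $\jac F(z)$ is a nonlinear (quadratic) functional of Gaussian variables and its density can a priori be singular (for instance, $X^2-Y^2$ with $X,Y$ independent standard Gaussians has a logarithmic singularity at $0$). The twisted setup crucially supplies four Gaussian real components (two complex derivatives $\mathcal{D}_1 F,\mathcal{D}_2 F$) organized as a signature-$(2,2)$ form, making the law analogous to a Laplace distribution (difference of exponentials) with bounded density; propagating this bound uniformly in $z$ is where the model-dependent non-degeneracy of Section \ref{sec_mod_dep} must be shown to survive conditioning on $F(z)=u$.
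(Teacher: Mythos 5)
Your parts (ii) and (iii) are correct and essentially coincide with the paper's argument: the boundary statement is a first-moment (Kac--Rice/Bulinskaya) argument, and (iii) is the classical index formula combined with the substitution $\partial F=\mathcal{D}_1F+\tfrac{\bar z}{2}F$, $\bar\partial F=\mathcal{D}_2F-\tfrac{z}{2}F$ and Green's theorem, exactly as in the paper.

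Part (i), however, has a genuine gap. You augment the map to $Y=(F-u,\jac F)\colon\mathbb{R}^2\to\mathbb{R}^3$, which forces you to bound the \emph{joint} density of $(F(z),\jac F(z))$ near $(u,0)$; for this you assert that, conditionally on $F(z)=u$, the pair $(\mathcal{D}_1F(z),\mathcal{D}_2F(z))$ is a non-degenerate four-dimensional real Gaussian, with ``non-degeneracy coming from the model-dependent eigenvalue bound of Section \ref{sec_mod_dep}''. That section only posits a lower bound on the smallest \emph{positive} eigenvalue of $\cov[(F(0),\mathcal{D}_jF(0))]$, precisely because these covariances are allowed to be singular (see Step 3 of the proof of Proposition \ref{prop_a}). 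The degeneracy is not hypothetical: for the flagship example $H(z)=e^{-|z|^2/2}$ (the GEF), Lemma \ref{lem_covariance} gives $\var[\mathcal{D}_2F_0(0)]=-\Delta H(0)-\tfrac12=0$, i.e.\ $\mathcal{D}_2F_0\equiv 0$, so conditionally on $F(z)=u$ the Jacobian is a shifted non-central $\chi^2$-type form of rank one complex variable rather than a signature-$(2,2)$ form of a non-degenerate Gaussian, and your Laplace-distribution argument does not apply as stated. The conclusion can be rescued by a case analysis (one checks from Lemma \ref{lem_covariance} that $\var[\mathcal{D}_1F_0\,|\,F_0]-\var[\mathcal{D}_2F_0\,|\,F_0]=1$, so the conditional quadratic form never collapses to a constant and its density remains bounded in every degenerate configuration), but as written the key density bound is unproved. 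The paper sidesteps all of this by invoking \cite[Proposition 6.5]{level}, which yields (i) from $C^2$ paths and boundedness of the density of $F(z)$ alone --- no control of the law of the Jacobian is required --- and this is the cleaner route.
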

\begin{proof}
	For part (i), we invoke \cite[Proposition 6.5]{level}.
	The required hypotheses are that $F$ be $C^2$ almost surely, as we assume in \eqref{A5}, and that the probability density of $F(z)$ be bounded near $u$ uniformly on $z$, which in our case holds because $F(z)$ is a circularly symmetric complex Gaussian vector with variance
	$\var[F(z)] = H(0)=1$ and bounded mean by \eqref{A1}. 
 
	For part (ii), we use Kac-Rice's formula to conclude that there is a measurable function $\rho_1\colon \C \to [0,\infty)$ such that for any Borel set $E \subset \C$: 
	\begin{align}\label{eq_kr1}
	\E \big[\#\{F=0\} \cap E\big] = \int_E \rho_1\, dA.
	\end{align}
	 Concretely, we invoke \cite[Theorem 6.2]{level}, a version of Kac-Rice's formula for Gaussian random fields that requires: (a) $F$ to be almost surely $C^1$, which is granted by \eqref{A5}; (b) $\var[F(z)]$ to be non-zero for all $z$, which is granted by \eqref{A3}; (c) the non-degeneracy condition \eqref{eq_nd}. The function $\rho_1$ can be expressed as a certain conditional expectation \cite[Theorem 6.2]{level}, but we shall not need this fact. Simply, since $\partial\Omega$ has null Lebesgue measure,
	 we use \eqref{eq_kr1} with $E=\partial\Omega$ to learn that
	 $\E \big[\#\{F=0\} \cap \partial \Omega \big]=0$, and thus $\#\{F=0\} \cap \partial \Omega=0$ almost surely.
	
	For part (iii), we note that \eqref{eq_nd} means that $0$ is a regular value of $F$. We invoke Poincare's index formula \cite{MR1487640, MR0209411} $\charge_\Omega = \frac{1}{2\pi i} \int_{\partial \Omega} \frac{dF}{F}$ and deduce \eqref{eq_charge2} as follows:
	\begin{align*}
		2\pi i \cdot \charge_\Omega &= \int_{\partial \Omega} \frac{dF}{F}
		= \int_{\partial \Omega} \left(\frac{\partial F }{F} \,dz + \frac{\bar{\partial} F }{F} \,d\bar{z}\right)
		\\
		&= \int_{\partial \Omega} \left(\frac{\mathcal{D}_1 F }{F} \,dz + \frac{\mathcal{D}_2 F }{F} \,d\bar{z}\right)
		+ \frac{1}{2} \int_{\partial \Omega} \bar{z} \,dz
		- \frac{1}{2} \int_{\partial \Omega} z \,d\bar{z}
		\\
		&=\int_{\partial \Omega} \left(\frac{\mathcal{D}_1 F }{F} \,dz + \frac{\mathcal{D}_2 F }{F} \,d\bar{z}\right) + i\cdot \Im\left[\int_{\partial \Omega} \bar{z} \,dz\right].
	\end{align*}
	Finally, by Green's theorem,
	\begin{align*}
		\Im\left[\int_{\partial \Omega} \bar{z} \,dz\right]=
		\int_{\partial \Omega} \left(x\,dy - y\,dx\right) = \int_\Omega 2 \,dxdy = 2 \abs{\Omega},
	\end{align*}
	which gives \eqref{eq_charge2}.
\end{proof}

\section{Covariance Estimates}\label{sec_cov}
Motivated by Lemma \ref{lem_pi}, we look into  correlations between the quotients $\mathcal{D}_j F/F$, and derive several technical estimates.
\begin{lemma}\label{lemma_c}
Let $F$ be a GWHF satisfying the general assumptions. Then the following hold.
\begin{itemize}
\item[(i)] For each $p \in (0,\infty)$ there exists a constant $C_p$ such that
\begin{align}\label{eq_ppp1}
\E \big[ \big| \mathcal{D}_j F(z) \big|^{p} \big]\leq C_p, \qquad z \in \mathbb{C}, j=1,2.
\end{align}
and
\begin{align}\label{eq_ppp2}
	\E \big[ \big| \mathcal{D}_j F(z) \cdot \mathcal{D}_k F(w) \big|^{p} \big] \leq C_p, \qquad z,w \in \mathbb{C}, j,k=1,2.
\end{align}
\item[(ii)] For each $p \in (1,2)$ there exists a constant $C_p$ such that
\begin{align}\label{eq_ma}
	\E \big[ \big| F(z) \big|^{-p} \big]\leq C_p, \qquad z \in \mathbb{C},
\end{align}
and
\begin{align}\label{eq_aaaa}
	\E \Big[ \big|{F(z)} \cdot
	F(w) \big|^{-p} \Big] \leq 
	\begin{cases}
	C_p & \mbox{when }|H(z-w)| < 1/2
	\\
	C_p \cdot (1-|H(z-w)|^2)^{1-p} & \mbox{when } |H(z-w)| \geq 1/2	
	\end{cases}.
\end{align}
\end{itemize}
In each case, the constant $C_p$ depends on the model and $p$.
\end{lemma}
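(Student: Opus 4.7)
The plan is to handle parts (i) and (ii) with distinct techniques. For part (i), I split $\mathcal{D}_j F(z) = \mathcal{D}_j F_0(z) + \mathcal{D}_j F_1(z)$: by \eqref{A1} the deterministic summand is uniformly bounded, and by Lemma \ref{lem_precov} together with twisted stationarity, $\mathcal{D}_j F_0(z)$ is a centered circular complex Gaussian whose variance $-\mathcal{D}_j \overline{\mathcal{D}_j} H(0)$ (finite by \eqref{A6}) is independent of $z$. Hence $\mathcal{D}_j F(z)$ is Gaussian with uniformly bounded mean and variance, and \eqref{eq_ppp1} follows from the classical Gaussian moment bound; \eqref{eq_ppp2} is then immediate from Cauchy--Schwarz at exponent $2p$. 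The single-point estimate \eqref{eq_ma} is equally direct: $F(z)$ is a unit-variance complex Gaussian with bounded mean, so its density on $\mathbb{C}$ is dominated by $1/\pi$, and for $p \in (1,2)$ the local integrability of $|w|^{-p}$ on $\mathbb{R}^{2}$ (combined with Gaussian tail decay outside a fixed disk) gives $\mathbb{E}|F(z)|^{-p} \leq C_p$ uniformly.

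The heart of the lemma is the product bound \eqref{eq_aaaa}, whose sharp dependence on $1-|H(z-w)|^{2}$ rules out H\"older's inequality when $p \in (1,2)$. My plan is to condition on $F(z)=v$. Setting $\rho = H(z-w)e^{i \Im(z \bar w)}$ so that $|\rho|=|H(z-w)|$, the conditional law of $F(w)$ is a circular complex Gaussian of variance $\sigma^{2} := 1-|\rho|^{2}$ and mean of the form $\tilde\mu(v) = \alpha v + \beta$, with $|\alpha|=|\rho|$ and $|\beta|$ uniformly bounded via \eqref{A1}. A rescaling of the conditional density gives the uniform pointwise bound
\begin{equation*}
\mathbb{E}\big[|F(w)|^{-p}\bigm|F(z)=v\big] \leq C_p\, \min\!\big(\sigma^{-p},\, |\tilde\mu(v)|^{-p}\big).
\end{equation*}
Integrating against the density of $F(z)$ and splitting the domain into $\{|\tilde\mu(v)|\leq \sigma\}$ and $\{|\tilde\mu(v)|>\sigma\}$ should then produce the desired power $\sigma^{2-2p}=(1-|\rho|^{2})^{1-p}$. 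On the first region, which is a disk of radius $\lesssim \sigma$ (using $|\rho| \geq 1/2$), the bound $\sigma^{-p}\int|v|^{-p}\,dA \lesssim \sigma^{2-2p}$ follows from the elementary two-dimensional estimate $\int_{B(c,r)} |v|^{-p}\, dA \lesssim r^{2-p}$. In the complementary case $|H(z-w)|<1/2$ one has $\sigma \geq \sqrt{3}/2$, and the analysis collapses to the uniform constant bound.

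I expect the main obstacle to be the region $\{|\tilde\mu(v)|>\sigma\}$. Here the integrand $|v|^{-p}|\tilde\mu(v)|^{-p}$ carries two singularities, at $v=0$ and at the unique root $v_{0}$ of $\tilde\mu$, which may coincide; and for $p \geq 1$ the product $|v|^{-p}|v-v_{0}|^{-p}$ is \emph{not} locally integrable at a common singularity, so a naive factor-by-factor argument breaks down. The saving grace is that the region excludes a disk of radius $\sim \sigma$ around $v_{0}$, and a near-diagonal calculation of the form $\int_{\sigma/|\rho|}^{1} r^{1-2p}\, dr \lesssim \sigma^{2-2p}$ recovers exactly the blow-up rate $(1-|\rho|^{2})^{1-p}$, while the Gaussian density of $F(z)$ supplies decay outside a bounded neighbourhood of the singularities. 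Boundedness of $F_{1}$ from \eqref{A1} is essential throughout, since it keeps $|\beta|$ uniformly bounded and prevents the singularities from escaping to infinity.
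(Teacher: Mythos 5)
Your proposal is correct, and for parts (i), \eqref{eq_ma}, and the case $|H(z-w)|<1/2$ it coincides with the paper's argument. For the case $|H(z-w)|\ge 1/2$ the underlying decomposition is also the same: your conditioning on $F(z)=v$ is exactly the paper's representation $F(w)=F_1(w)+\alpha\xi_1+(1-|\alpha|^2)^{1/2}\xi_2$ with independent standard Gaussians, and your bound $\E\big[|F(w)|^{-p}\,\big|\,F(z)=v\big]\le C_p\min\big(\sigma^{-p},|\tilde\mu(v)|^{-p}\big)$ is precisely the content of the paper's convolution estimate (Lemma \ref{lemma_conv}), which yields $C_p\sigma^{-p}(1+|\tilde\mu(v)|/\sigma)^{-p}$. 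Where you diverge is in the last step, the two-singularity integral $\int |v|^{-p}\min(\sigma^{-p},|\alpha v+\beta|^{-p})e^{-|v-F_1(z)|^2}\,dA(v)$: you propose a region decomposition around $v=0$ and $v=v_0$, with the excluded $\sigma$-disk around $v_0$ supplying $\int_{\sigma/|\alpha|} r^{1-2p}\,dr\asymp\sigma^{2-2p}$. This does close, but it requires some case analysis depending on whether $|v_0|\lesssim\sigma$, $\sigma\lesssim|v_0|\lesssim 1$, or $|v_0|\gtrsim 1$ (the crude bound $|v|^{-p}|v-v_0|^{-p}\le |v|^{-2p}+|v-v_0|^{-2p}$ fails globally since $|v|^{-2p}$ is not locally integrable, so you must keep track of which singularity is active on each region). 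The paper avoids this bookkeeping with one application of the Hardy--Littlewood rearrangement: after translating so that the smoothed factor $(1+c|u|)^{-p}$ is centered at the origin, rearrangement moves the remaining singularity $|u+\cdot|^{-p}$ to the origin as well, and the superposed integrand $|u|^{-p}(1+c|u|)^{-p}$ is globally integrable (integrable at $0$ since $p<2$, decaying like $|u|^{-2p}$ at infinity since $2p>2$); a rescaling then extracts $(1-|\alpha|^2)^{1-p}$ in one stroke. If you want to minimize case-splitting, that rearrangement step is the one refinement worth adopting; otherwise your plan is sound as stated.
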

\begin{proof}
\noindent {\bf Part (i)}. The mean of $\mathcal{D}_j F(z)$ is bounded by \eqref{A1} and its variance is bounded by Lemma \ref{lem_precov} and \eqref{A6}, which gives \eqref{eq_ppp1}, while \eqref{eq_ppp2} follows in turn by H\"older.

\noindent {\bf Part (ii)}.
The Gaussian random vector $(F(z), F(w))$ has mean $(F_1(z), F_1(w))$ and covariance
\begin{align}\label{q2}
\Gamma(z,w) = \begin{bmatrix}
	1 & H(z-w) e^{i \mathrm{Im}(z \bar w)}
	\\
	\overline{H(z-w)} \cdot e^{-i \mathrm{Im}(z \bar w)} & 1
	\end{bmatrix}.
\end{align}
To prove \eqref{eq_ma} we invoke the Hardy-Littlewood rearrangement inequality:
\begin{align*}
	\E \big[ \big| F(z) \big|^{-p} \big] = \frac{1}{\pi} \int_{\C} |u+F_1(z)|^{-p} e^{-|u|^2} \,dA(u)
	\leq  \frac{1}{\pi} \int_{\C} |u|^{-p} e^{-|u|^2} \,dA(u) =: C_p <\infty,
\end{align*}
because $p<2$. Suppose that $|H(z-w)| < 1/2$. Then $\Gamma(z,w)$ has determinant $\geq 1/2$ and there exists a model-dependent constant $a>0$ such that
\begin{align}\label{q1}
\Gamma^{-1}(z,w) \geq a I.
\end{align}
We use \eqref{q1} and the Hardy-Littlewood rearrangement inequality to estimate
\begin{align*}
\E \Big[ \big|{F(z)} \cdot
F(w) \big|^{-p} \Big] &= \frac{1}{\pi^2 \det \Gamma} \int_{\C^2} |u + F_1(z)|^{-p} |v + F_1(w)|^{-p} e^{-(u,v)^* \Gamma^{-1} (u,v)}\,dA(u) dA(v)
\\
&\lesssim \int_\C |u + F_1(z)|^{-p} e^{-a|u|^2} \,dA(u) \cdot 
\int_\C |v + F_1(w)|^{-p} e^{-a|v|^2} \,dA(v)
\\
&\leq \int_\C |u|^{-p} e^{-a|u|^2} \,dA(u) \cdot 
\int_\C |v|^{-p} e^{-a|v|^2} \,dA(v) =: C_p< \infty
\end{align*}
because $p<2$.

Suppose now that $|H(z-w)| \geq 1/2$ and let $\alpha := H(z-w) e^{i \mathrm{Im}(z \bar w)}$. Inspecting \eqref{q2}, we write
\begin{align*}
	&F(z) = F_1(z) + \xi_1,
	\\
	&F(w) = F_1(w) + \alpha \cdot \xi_1 + (1-|\alpha|^2)^{1/2} \cdot \xi_2,
\end{align*}
with $(\xi_1,\xi_2) \sim \mathcal{N}_\C(0,I)$. Then, 
\begin{align*}
	&\E \Big[ \big|{F(z)} \cdot
	F(w) \big|^{-p} \Big] 
	\\
	&\quad= \frac{1}{\pi^2} \int_{\C^2} 
	|u + F_1(z)|^{-p} \, |\alpha u + (1-|\alpha|^2)^{1/2} v + F_1(w)|^{-p}  \, e^{-|u|^2} e^{-|v|^2} \,dA(u) dA(v)
	\\
	&\quad= \frac{1}{\pi^2} \int_{\C^2} 
	|u + F_1(z)-\alpha^{-1}F_1(w)|^{-p} \, |\alpha u + (1-|\alpha|^2)^{1/2} v|^{-p}  \, e^{-|u-\alpha^{-1}F_1(w)|^2} e^{-|v|^2} \,dA(u) dA(v)
	\\
	&\quad\leq 
	\frac{1}{\pi^2} \int_{\C^2} 
	|u + F_1(z)-\alpha^{-1}F_1(w)|^{-p} \, |\alpha u + (1-|\alpha|^2)^{1/2} v|^{-p}  \,  e^{-|v|^2} \,dA(u) dA(v)
	\\
	&\quad=
	\frac{1}{\pi^2} \int_{\C} 
	|u + F_1(z)-\alpha^{-1}F_1(w)|^{-p} \,  \int_{\C} |\alpha u + (1-|\alpha|^2)^{1/2} v|^{-p}   e^{-|v|^2} \,dA(v) dA(u).
\end{align*}
Using Lemma \ref{lemma_conv}, we estimate the inner integral as
\begin{align*}
&\int_{\C} |\alpha u + (1-|\alpha|^2)^{1/2} v|^{-p}   e^{-|v|^2} \,dA(v)
=(1-|\alpha|^2)^{-p/2}
\int_{\C} |\alpha(1-|\alpha|^2)^{-1/2} u +  v|^{-p}   e^{-|v|^2} \,dA(v)
\\
&\qquad\leq C_p (1-|\alpha|^2)^{-p/2} (1+|\alpha(1-|\alpha|^2)^{-1/2}u|)^{-p},
\end{align*}
where $C_p$ is a constant depending only on $p$ that might change from line to line.
Therefore, using the Hardy-Littlewood rearrangement inequality
\begin{align*}
&\E \Big[ \big|{F(z)} \cdot
F(w) \big|^{-p} \Big]
\\
&\quad\leq 
	 C_{p}  (1-|\alpha|^2)^{-p/2}
\int_{\C} 
|u + F_1(z)-\alpha^{-1}F_1(w)|^{-p} \,  (1+|\alpha|(1-|\alpha|^2)^{-1/2}|u|)^{-p} dA(u)
\\
&\quad\leq C_{p} (1-|\alpha|^2)^{-p/2}
\int_{\C} 
|u|^{-p} \,  (1+|\alpha|(1-|\alpha|^2)^{-1/2}|u|)^{-p} dA(u)
\\
&\quad= C_{p} (1-|\alpha|^2)^{1-p/2} |\alpha|^{-2} 
\int_{\C} 
||\alpha|^{-1} (1-|\alpha|^2)^{1/2} u|^{-p} \,  (1+|u|)^{-p} dA(u)
\\
&\quad \leq C_p 
(1-|\alpha|^2)^{1-p} 
\int_{\C} |u|^{-p} \,  (1+|u|)^{-p} dA(u),
\end{align*}
since $|\alpha|^{p-2} \lesssim 1$. Finally, we note that $\int_{\C} |u|^{-p} \,  (1+|u|)^{-p} dA(u) < \infty$ because $1<p<2$, and obtain \eqref{eq_aaaa}.
\end{proof}

\begin{coro}\label{coro1}
Let $F$ be a GWHF satisfying the general assumptions. Then there exist model-dependent constants $C,L>0$ such that
\begin{align}\label{eq_p1}
	\E \Big[ \big| \tfrac{\mathcal{D}_j F(z) }{F(z)}\big|\Big] \leq C,
	\qquad z \in \C,
\end{align}
and
\begin{align}\label{eq_p1b}
	\E \Big[ \big| \tfrac{\mathcal{D}_j F(z) }{F(z)} \cdot
	{\tfrac{\mathcal{D}_k F(w) }{{F(w)}}} \big| \Big] \leq C, \qquad z,w \in \C, |z-w| > L.
\end{align}
\end{coro}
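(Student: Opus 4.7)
The plan is to deduce Corollary \ref{coro1} directly from Lemma \ref{lemma_c} via H\"older's inequality, with the only mildly nontrivial step being the translation of the decay assumption \eqref{A6} into a threshold $L$ beyond which $|H(z-w)|<1/2$.

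For \eqref{eq_p1}, I would fix an exponent $p\in(1,2)$ and let $q=p/(p-1)\in(2,\infty)$ be its H\"older conjugate. Then
\[
\E\Big[\Big|\tfrac{\mathcal{D}_j F(z)}{F(z)}\Big|\Big]
=\E\big[|\mathcal{D}_j F(z)|\cdot|F(z)|^{-1}\big]
\leq \E\big[|\mathcal{D}_j F(z)|^q\big]^{1/q}\cdot \E\big[|F(z)|^{-p}\big]^{1/p}.
\]
The first factor is uniformly bounded in $z$ by \eqref{eq_ppp1}, and the second by \eqref{eq_ma}, with constants that depend only on the model and on $p$. This gives \eqref{eq_p1}.

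For \eqref{eq_p1b}, the same H\"older split applied to the product yields
\[
\E\Big[\Big|\tfrac{\mathcal{D}_j F(z)}{F(z)}\cdot\tfrac{\mathcal{D}_k F(w)}{F(w)}\Big|\Big]
\leq \E\big[|\mathcal{D}_j F(z)\cdot\mathcal{D}_k F(w)|^q\big]^{1/q}\cdot \E\big[|F(z)\cdot F(w)|^{-p}\big]^{1/p}.
\]
The first factor is bounded uniformly in $z,w$ by \eqref{eq_ppp2}. For the second factor, I would invoke \eqref{eq_aaaa}, which is uniformly bounded in the regime $|H(z-w)|<1/2$, and here is where the decay condition \eqref{A6} enters: since $(1+|\zeta|^2)|H(\zeta)|$ is bounded by some model-dependent constant $M$, one has $|H(\zeta)|\leq M/(1+|\zeta|^2)$, so $|H(z-w)|<1/2$ whenever $|z-w|>L$ for a suitable model-dependent $L$ (e.g.\ $L=\sqrt{2M}$). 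With this choice of $L$, we land in the first branch of \eqref{eq_aaaa}, and \eqref{eq_p1b} follows with a model-dependent constant.

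The main (and essentially only) obstacle is the choice of $L$; once it is made so that the ``decoupled'' estimate $|H(z-w)|<1/2$ holds, both bounds reduce to routine applications of H\"older combined with the moment estimates already collected in Lemma \ref{lemma_c}. No new probabilistic input is required, and the dependence of the final constant on the model is inherited directly from that lemma.
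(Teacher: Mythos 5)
Your proof is correct and follows exactly the paper's argument: Hölder's inequality with a conjugate pair $p\in(1,2)$, $q>2$, combined with the moment bounds \eqref{eq_ppp1}, \eqref{eq_ppp2}, \eqref{eq_ma}, and the first branch of \eqref{eq_aaaa}, after choosing $L$ via the decay assumption \eqref{A6} so that $|H(z-w)|<1/2$ for $|z-w|>L$. Your explicit derivation of $L$ from the bound $|H(\zeta)|\leq M/(1+|\zeta|^2)$ is a correct elaboration of the step the paper leaves implicit.
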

\begin{proof}
Let us choose $1<p<2$, say $p=3/2$, and let $q \in (2,\infty)$ be its H\"older conjugate, $1/p+1/q=1$. By H\"older and Lemma \ref{lemma_c},
\begin{align*}
\E \Big[ \big| \tfrac{\mathcal{D}_j F(z) }{F(z)}\big|\Big]
\leq \E \Big[ \big| \mathcal{D}_j F(z) \big|^q\Big]^{1/q} \cdot \E \Big[ \big| F(z)\big|^{-p}\Big]^{1/p} \leq C.
\end{align*}
Similarly, by \eqref{A2} and \eqref{A6}, there exists a constant $L>0$ such that $|H(z-w)|<1/2$ for $|z-w|>L$ and Lemma \ref{lemma_c} implies
\begin{align*}
	\E \Big[ \big| \tfrac{\mathcal{D}_j F(z) }{F(z)} \cdot
	{\tfrac{\mathcal{D}_k F(w) }{{F(w)}}} \big| \Big]
	\leq
	\E \Big[ \big| {\mathcal{D}_j F(z) } \cdot
	{\mathcal{D}_k F(w) }\big|^q \Big]^{1/q} \cdot 
	\E \Big[ \big| {F(z) } \cdot
	{F(w) }\big|^{-p} \Big]^{1/p} \leq C'.
\end{align*}
\end{proof}

\begin{prop}\label{prop_a}
Let $F$ be a GWHF satisfying the general assumptions. Then there exist model-dependent constants $L,C>0$ such that the following bound holds for all $j,k\in\{1,2\}$:
\begin{align}\label{eq_a}
\Big|
\cov \Big[ \tfrac{\mathcal{D}_j F(z) }{F(z)} ,
{\tfrac{\mathcal{D}_k F(w) }{{F(w)}}} \Big]\Big| \leq C (1+|z-w|)^{-2}, \qquad
|z-w| >L.
\end{align}
\end{prop}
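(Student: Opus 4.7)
The plan is to reduce the covariance of the quotients to four manageable pieces via Gaussian regression, and then to estimate each using the decay of the twisted kernel $H$. First, I would write
\begin{align*}
\mathcal{D}_j F(z) = a_j F(z) + W_j(z) + h_j(z),
\end{align*}
where $a_j := \mathcal{D}_j H(0)$ is chosen so that $W_j(z) := \mathcal{D}_j F_0(z) - a_j F_0(z)$ is a centered Gaussian field independent of $F_0(z)$, and $h_j(z) := \mathcal{D}_j F_1(z) - a_j F_1(z)$ is a bounded deterministic function by \eqref{A1}. Dividing by $F(z)$ and using that $\mathbb{E}[W_j(z)/F(z)] = 0$, the target covariance splits into four cross-covariances of $W$- and $h$-type.

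Next, for any piece containing $W_j$, I would perform a second Gaussian regression $W_j(z) = c_1 F_0(z) + c_2 F_0(w) + W_j'(z,w)$ with $W_j'(z,w)$ independent of both $F_0(z)$ and $F_0(w)$. Solving the $2\times 2$ system explicitly, the coefficients $c_1, c_2$ are of magnitude $|H(z-w)| + |\mathcal{D}_j H(z-w)|$, hence $O((1+|z-w|)^{-2})$ by \eqref{A6}. Likewise $\mathbb{E}[W_j'(z,w)\overline{W_k'(w,z)}] = O((1+|z-w|)^{-2})$ after a routine expansion using \eqref{A6}. In the $W$--$W$ covariance, the contribution of the orthogonal pair $(W_j', W_k')$ factorizes by independence to $\mathbb{E}[W_j'\overline{W_k'}] \cdot \mathbb{E}[1/(F(z)\overline{F(w)})]$, whose second factor is uniformly bounded via \eqref{eq_aaaa} for $p \in (1,2)$ close to $1$ combined with the elementary splitting $|x|^{-1} \leq 1 + |x|^{-(1+\varepsilon)}$. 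Every other term arising from the second regression carries a factor $c_i$ or $c_i'$, which yields the required $(1+|z-w|)^{-2}$ decay, while the accompanying expectation is bounded either by \eqref{eq_ma} directly or by combining \eqref{eq_ma} with the unimodular identity $|F(w)\overline{F(z)}/(F(z)\overline{F(w)})| \equiv 1$. The mixed $W$--$h$ and $h$--$W$ cross covariances are treated analogously.

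The main obstacle is the purely deterministic piece
\begin{align*}
\mathrm{Cov}\bigl[h_j(z)/F(z),\, h_k(w)/F(w)\bigr] = h_j(z)\, \overline{h_k(w)}\, \mathrm{Cov}\bigl[1/F(z),\, 1/\overline{F(w)}\bigr],
\end{align*}
which vanishes in the zero-mean case $F_1 \equiv 0$ but in general demands the sharp bound $|\mathrm{Cov}[1/F(z), 1/\overline{F(w)}]| \lesssim |H(z-w)|$. This is delicate because $\mathbb{E}[|F(z)|^{-2}] = +\infty$ rules out Cauchy--Schwarz and $L^2$-gradient-based Gaussian covariance inequalities. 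My plan is to work directly with the explicit joint density
\begin{align*}
p(u,v) = \frac{1}{\pi^2(1-|\rho|^2)} \exp\!\Bigl(-\frac{|u|^2 + |v|^2 - 2\,\mathrm{Re}(\bar u\, \rho\, v)}{1-|\rho|^2}\Bigr), \qquad \rho := H(z-w)\, e^{i\,\mathrm{Im}(z \bar w)},
\end{align*}
of $(F_0(z), F_0(w))$, and to Taylor-expand $p(u,v)/(p_1(u) p_2(v)) - 1 = \rho\, \bar u v + \bar\rho\, u \bar v + R(u,v,\rho)$ at $\rho = 0$. Integration of the linear term against $1/((\mu_z+u)\overline{(\mu_w+v)})$ (with $\mu_z := F_1(z)$, $\mu_w := F_1(w)$) produces a sum $\rho \cdot b_1 \overline{b_2} + \bar\rho \cdot b_1' \overline{b_2'}$, where each factor is an expectation of the form $\mathbb{E}[\bar\xi/(\mu+\xi)]$ or $\mathbb{E}[\xi/(\mu+\xi)]$ with $\xi \sim \mathcal{N}_\mathbb{C}(0,1)$, all finite and explicitly computable by residues (for instance, $\mathbb{E}[\xi/(\mu+\xi)] = e^{-|\mu|^2}$). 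The remainder $R$ obeys a pointwise estimate $|R(u,v,\rho)| \lesssim |\rho|^2\, (1+|u|^2)(1+|v|^2)$ uniformly for $|\rho| \leq 1/2$, so its contribution to the integral is $O(|\rho|^2)$ by \eqref{eq_ma} combined with the Gaussian decay of $p_1 p_2$. Since $|\rho| = |H(z-w)| = O((1+|z-w|)^{-2})$ by \eqref{A6}, this yields the desired estimate and closes the argument.
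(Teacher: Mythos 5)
Your argument is correct in substance, but it takes a genuinely different route from the paper. The paper works with the full $4$-vector $(F(z),\mathcal{D}_jF(z),F(w),\mathcal{D}_kF(w))$ and perturbs its covariance $\Gamma$ to the block-diagonal $\widetilde{\Gamma}$ \emph{inside the Gaussian density}, bounding $\lvert e^{-u\Gamma^{-1}u^*}-e^{-u\widetilde{\Gamma}^{-1}u^*}\rvert$ via $\lvert e^{-t}-e^{-s}\rvert\le\lvert t-s\rvert(e^{-t}+e^{-s})$ and the $O((1+|z-w|)^{-2})$ bound on $\Gamma^{-1}-\widetilde{\Gamma}^{-1}$; this treats all terms (including the pure-mean contribution) in one stroke, at the cost of a separate case analysis when $\mathrm{Cov}[(F(z),\mathcal{D}_jF(z))]$ is singular. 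You instead decompose the random variables themselves by a two-stage Gaussian regression, which reduces everything to scalar coefficients $a_j,c_1,c_2$ and to the orthogonal remainders $W_j'$; the cross terms then either vanish by independence or carry an explicit $O((1+|z-w|)^{-2})$ factor, and the degenerate case disappears ($W_j\equiv 0$ is harmless). What your route buys is transparency about \emph{which} correlations drive the decay; what it costs is that the purely deterministic piece forces you to prove the genuinely new estimate $\lvert\mathrm{Cov}[1/F(z),1/F(w)]\rvert\lesssim|H(z-w)|$, which the paper never needs in isolated form. Your density-expansion argument for that estimate is sound, with one quantitative slip: the remainder of the expansion of $p(u,v)/(p_1(u)p_2(v))$ is not bounded by $|\rho|^2(1+|u|^2)(1+|v|^2)$ uniformly for $|\rho|\le 1/2$ — the second-order Taylor remainder of the exponential carries an extra factor $e^{C|\rho|(|u|^2+|v|^2)}$, which at $|\rho|=1/2$ overwhelms the Gaussian weight $e^{-|u|^2-|v|^2}$. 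You must keep this factor and require $|\rho|$ below a small absolute threshold so that the product still has Gaussian decay; this is harmless here because \eqref{A6} lets you make $|H(z-w)|$ as small as you like by enlarging $L$, but the restriction should be stated.
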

\begin{proof}
Throughout the proof, we fix $j,k\in\{1,2\}$. \\
\noindent {\bf Step 1}. Assume first that the random vectors $(F(z), \mathcal{D}_j F(z))$, $(F(w), \mathcal{D}_k F(w))$ have non-singular covariance matrices, and denote them by $A_j, A_k \in \C^{2\times2}$. These matrices are independent of $(z,w)$ by Lemma \ref{lem_precov} (see also Lemma \ref{lem_covariance}).

We consider the random vector 
\begin{align}\label{V}
(F(z), \mathcal{D}_j F(z), F(w), \mathcal{D}_k F(w))
\end{align}
and decompose its covariance into $2\times2$ blocks 
\begin{align}\label{eq_AB}
\Gamma(z,w) := 
\begin{bmatrix}
A_j & B(z,w) \\
B^*(z,w) & A_k 
\end{bmatrix},
\end{align}
The entries in these blocks are the covariances derived in \eqref{eq_cov_spec}--\eqref{eq_cov_spec_diff2}.
Thus, by Assumption \eqref{A6}, $\| A_j \|, \|A_k\| \lesssim 1$, while
$\| B(z,w) \| \lesssim (1+|z-w|)^{-2}$. As the \emph{approximate covariance matrix}
\begin{align}\label{eq_AB2}
\widetilde{\Gamma}(z,w) := 
\begin{bmatrix}
	A_j & 0 \\
	0 & A_k 
\end{bmatrix}
\end{align}
is invertible, there exist constants $L,a>0$ such that if $|z-w|>L$, $\Gamma(z,w)$ is also invertible, and, moreover,
\begin{align}\label{eq_p2}
	\|\Gamma(z,w) - \widetilde{\Gamma}(z,w)\|, \ 
\|\Gamma(z,w)^{-1} - \widetilde{\Gamma}(z,w)^{-1}\| \lesssim (1+|z-w|)^{-2},
\end{align}
\begin{align}
\label{eq_p2b}
\Gamma(z,w) \geq a I, \qquad
\Gamma(z,w)^{-1} \geq a I, \qquad
\widetilde{\Gamma}(z,w) \geq aI, \qquad
\widetilde{\Gamma}(z,w)^{-1} \geq aI,
\end{align}
where $a$ is model dependent (depending on the smallest eigenvalue of $\widetilde{\Gamma}$).
We also assume that $L$ is larger than the corresponding constant in Corollary \ref{coro1}, so that \eqref{eq_p1} and \eqref{eq_p1b} hold.

\noindent {\bf Step 2}. Let $z,w\in\C$ with $|z-w|>L$. 

Using \eqref{eq_p2} and a  relative perturbation bound for determinants, we 
get
\begin{equation*}
    \bigg\lvert\frac{\det \Gamma(z,w) -\det \widetilde{\Gamma}(z,w)}{\det \widetilde{\Gamma}(z,w)}\bigg\rvert
    \leq O((1+|z-w|)^{-2}).
\end{equation*}
Thus, by \eqref{eq_p1b}, we can rewrite
\begin{align}\label{eq_step2}
\E \Big[ \tfrac{\mathcal{D}_j F(z) }{F(z)} 
\overline{\left({\tfrac{\mathcal{D}_k F(w) }{{F(w)}}}\right)} \Big]
= 
\frac{\det \Gamma(z,w) }{\det \widetilde{\Gamma}(z,w)} \cdot
\E \Big[ \tfrac{\mathcal{D}_j F(z) }{F(z)} 
\overline{\left({\tfrac{\mathcal{D}_k F(w) }{{F(w)}}}\right)} \Big]
+ O((1+|z-w|)^{-2}).
\end{align}
Hence,
\begin{align}\label{eq_p3}
\begin{aligned}
&\cov \Big[ \tfrac{\mathcal{D}_j F(z) }{F(z)} ,
{\tfrac{\mathcal{D}_k F(w) }{{F(w)}}} \Big]
\\
&\qquad=
\E \Big[ \tfrac{\mathcal{D}_j F(z) }{F(z)} 
\overline{\left({\tfrac{\mathcal{D}_k F(w) }{{F(w)}}}\right)} \Big]
- \E \Big[ \tfrac{\mathcal{D}_j F(z) }{F(z)} \Big]
\E \Big[ \overline{\Big({\tfrac{\mathcal{D}_k F(w) }{{F(w)}}}\Big)} \Big]
\\
&\qquad=
\frac{\det \Gamma(z,w) }{\det \widetilde{\Gamma}(z,w)} \cdot
\E \Big[ \tfrac{\mathcal{D}_j F(z) }{F(z)} 
\overline{\left({\tfrac{\mathcal{D}_k F(w) }{{F(w)}}}\right)} \Big]
- \E \Big[ \tfrac{\mathcal{D}_j F(z) }{F(z)} \Big]
\E \Big[ \overline{\Big({\tfrac{\mathcal{D}_k F(w) }{{F(w)}}}\Big)} \Big]
+ O((1+|z-w|)^{-2})
\\
&\qquad=
\frac{1}{\det \widetilde{\Gamma}(z,w)}
\int_{\C^4} \frac{u_2 + \mathcal{D}_j F_1(z)}{u_1 + F_1(z)} \cdot 
\overline{\left(\frac{u_4 + \mathcal{D}_k F_1(w)}{u_3 + F_1(w)}\right)} 
\left[ e^{-u \Gamma(z,w)^{-1} u^*} - e^{-u \widetilde{\Gamma}(z,w)^{-1} u^*}\right]
\, dA(u)
\\
&\qquad\quad +
 O((1+|z-w|)^{-2}).
\end{aligned}
\end{align}
We use the estimate $|e^{-t} - e^{-s}| \leq |t-s| (e^{-t}+e^{-s})$, $s,t>0$, 
and the fact that $\Gamma(z,w)$ and $\widetilde{\Gamma}(z,w)$ are positive matrices to bound
\begin{align*}
&\big| e^{-u \Gamma(z,w)^{-1} u^*} - e^{-u \widetilde{\Gamma}(z,w)^{-1} u^*}\big|
\leq | u (\Gamma^{-1}(z,w) - \widetilde{\Gamma}^{-1}(z,w)) u^*|
\big( e^{-u \Gamma(z,w)^{-1} u^*} + e^{-u \widetilde{\Gamma}(z,w)^{-1} u^*}\big)
\\
&\qquad\lesssim (1+|z-w|)^{-2} \big( e^{-u \Gamma(z,w)^{-1} u^*} + e^{-u \widetilde{\Gamma}(z,w)^{-1} u^*}\big) (|u_1|^2+|u_2|^2+|u_3|^2+|u_4|^2)
\\
&\qquad\lesssim (1+|z-w|)^{-2} \prod_{h=1}^4 e^{-a |u_j|^2} (1+|u_j|^2).
\end{align*}
Combining this estimate with \eqref{eq_p3} we conclude that
\begin{align}\label{kl5}
\Big|\cov \Big[ \tfrac{\mathcal{D}_j F(z) }{F(z)} ,
{\tfrac{\mathcal{D}_k F(w) }{{F(w)}}} \Big]\Big|
\lesssim (1+|z-w|)^{-2} \big(
1+I_1 \cdot I_2 \cdot I_3 \cdot I_4 \big),
\end{align}
where
\begin{align}\label{kl1}
\begin{aligned}
	I_1 = \int_\C |u_1+F_1(z)|^{-1} (1+|u_1|^2) e^{-a|u_1|^2} \, dA(u_1)
	&\lesssim \int_\C |u_1+F_1(z)|^{-1} e^{-\tfrac{a}{2}|u_1|^2} \, dA(u_1)
	\\
	& \leq 
	\int_\C |u_1|^{-1} e^{-\tfrac{a}{2}|u_1|^2} \, dA(u_1) \lesssim 1,
\end{aligned}
\end{align}
by the Hardy-Littlewood rearrangement inequality;
\begin{align}\label{kl2}
I_2 = \int_\C |u_2+\mathcal{D}_j F_1(z)| (1+|u_2|^2) e^{-a|u_2|^2} \, dA(u_2)
\lesssim \int_\C (1+|u_2|) \cdot (1+|u_2|^2) e^{-a|u_2|^2} \, dA(u_2) \lesssim 1,
\end{align}
by \eqref{A1}; and
\begin{align}\label{kl3}
I_3 = \int_\C |u_3+F_1(w)|^{-1} (1+|u_3|^2) e^{-a|u_3|^2} \, dA(u_3) \lesssim 1,
\\\label{kl4}
I_4 = \int_\C |u_4+\mathcal{D}_k F_1(w)| (1+|u_4|^2) e^{-a|u_4|^2} \, dA(u_4) \lesssim 1,
\end{align}
as in \eqref{kl1}, \eqref{kl2}.
We now combine \eqref{kl1}, \eqref{kl2}, \eqref{kl3}, \eqref{kl4} to obtain the desired conclusion.

\noindent {\bf Step 3}. Finally, we consider the case in which the covariance matrix of either \[V_j(z) := (F(z), \mathcal{D}_j F(z)) \qquad \mbox{or} \qquad V_k(w) := (F(w), \mathcal{D}_k F(w))\] is singular (possibly both).

If the covariance matrix of $V_j(z)$ is non-singular, let $V_j^*(z) := V_j(z)$. If it is singular, let $V^*_j(z) := F(z)$. Note that in this latter case, since $\var[F(z)] = |H(0)|^2=1$, there exists a deterministic constant $\lambda_j \in \C$ such that $\mathcal{D}_j F_0(z) =  \lambda_j F_0(z)$ and, thus,
$\mathcal{D}_j F(z) = \mathcal{D}_j F_1(z) + \lambda_j F_0(z)$. Moreover, $|\lambda_j| \lesssim 1$ by \eqref{A6}. We define $V_k^*$ similarly, and concatenate $V_j^*$ and $V_k^*$ to form a random vector $V^* =(V^*_j, V^*_k)$ of length $n \in \{2,3\}$. This vector is thus obtained from \eqref{V} by eliminating one or two components.

Let $A_j, A_k$ and $\Gamma(z,w)$ be the covariance matrices of $V^*_j$, $V^*_k$ and $V^*$ respectively, and consider again the decomposition \eqref{eq_AB} and \eqref{eq_AB2}. We argue as in the previous case to obtain constants $a,L>0$ such that \eqref{eq_p2} and \eqref{eq_p2b} hold. As a consequence \eqref{eq_step2} remains valid. The expansion \eqref{eq_p3} remains valid with integration over $\C^n$ and by setting $u_2=\lambda_j u_1$ and/or $u_4=\lambda_k u_3$. To adapt the subsequent bounds, one needs to consider one additional estimate of the form
\begin{align*}
I_5 &= \int_\C |u+F_1(z)|^{-1} |\lambda_j u+\mathcal{D}_j F_1(z)| (1+|u|^2) e^{-a|u|^2} \, dA(u)
\\
&\lesssim \int_\C |u+F_1(z)|^{-1} (1+|u|)(1+|u|^2) e^{-a|u|^2} \, dA(u)
\\
&\lesssim \int_\C |u+F_1(z)|^{-1} e^{-\tfrac{a}{2}|u|^2} \, dA(u)
\\
&\leq \int_\C |u|^{-1} e^{-\tfrac{a}{2}|u|^2} \, dA(u) \lesssim 1,
\end{align*}
and an analogous quantity with $(F_1(w), \mathcal{D}_k F_1(w))$ in lieu of $(F_1(z), \mathcal{D}_j F_1(z))$. This completes the proof.
\end{proof}

\section{Variance of Aggregated Charge}\label{sec_var}
We now state and prove the following more precise version of Theorem \ref{th1}.
\begin{theorem}\label{th1plus}
Let $F$ be the GWHF \eqref{eq_F} and assume \eqref{A1}, \eqref{A2}, \eqref{A3}, \eqref{A4}, \eqref{A5}, and \eqref{A6}. Then there exists a model-dependent constant $c$ such that
\begin{align*}%
\sup_{w \in \mathbb{C}} \sup_{R \geq 1} \tfrac{1}{R} \mathrm{Var} \Big[ \sum_{|z-w| \leq R} \charge_z \Big] \leq c < \infty.
\end{align*}
\end{theorem}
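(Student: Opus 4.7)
\emph{Plan for the proof of Theorem \ref{th1plus}.} The plan is to combine the covariant Poincar\'e index formula (Lemma \ref{lem_pi}(iii)) with the covariance decay estimates of Section \ref{sec_cov}. Applied to $\Omega = B_R(w)$, so that $|B_R(w)|/\pi = R^2$, the index formula gives
\begin{equation*}
\charge_{B_R(w)} - R^2 = \frac{1}{2\pi i}\int_{\partial B_R(w)}\!\!\left(\frac{\mathcal{D}_1 F}{F}\,dz + \frac{\mathcal{D}_2 F}{F}\,d\bar z\right).
\end{equation*}
Since $R^2$ is deterministic, the task reduces to controlling the variance of the stochastic boundary integral on the right-hand side. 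Expanding it bilinearly into four pieces of the form $\cov[\int X_j\,dz_1, \int X_k\,dz_2]$ with $X_j = \mathcal{D}_j F/F$, it suffices to prove that each is $O(R)$, with a model-dependent constant and uniformly in $w$.

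I would parametrize $\partial B_R(w)$ by $z_\theta = w + Re^{i\theta}$, so that $|z_\theta - z_{\theta'}| = 2R|\sin((\theta-\theta')/2)|$ and $|dz_\theta| = R\,d\theta$. Each of the four contributions is then, up to factors of modulus one, of the form
\begin{equation*}
R^2\int_0^{2\pi}\!\int_0^{2\pi}\cov\!\left[\frac{\mathcal{D}_j F(z_\theta)}{F(z_\theta)},\,\frac{\mathcal{D}_k F(z_{\theta'})}{F(z_{\theta'})}\right]\,d\theta\,d\theta'.
\end{equation*}
I would split the integration region into the \emph{far regime} $|z_\theta - z_{\theta'}| > L$ and the \emph{near regime} $|z_\theta - z_{\theta'}| \leq L$, where $L$ is the constant from Proposition \ref{prop_a}. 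On the far regime that proposition yields the quadratic decay $|\cov| \lesssim (1+|z_\theta - z_{\theta'}|)^{-2}$; writing $\phi = \theta-\theta'$ and substituting $u \approx R\phi$ near $\phi=0$ gives
\begin{equation*}
R^2\cdot 2\pi\int_{-\pi}^{\pi}\frac{C\,d\phi}{(1+2R|\sin(\phi/2)|)^2}\ \lesssim\ R\int_{-\infty}^{\infty}\frac{du}{(1+|u|)^2}\ \lesssim\ R.
\end{equation*}

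On the near regime, which corresponds to $|\theta-\theta'| \lesssim L/R$, I would drop cancellations and bound the absolute covariance by twice the expectation of the absolute product, and then apply H\"older with exponents $(p,q)$, $p\in(1,2)$ close to $1$. Lemma \ref{lemma_c}(i) controls the $L^q$ norm of $\mathcal{D}_j F$ uniformly, while Lemma \ref{lemma_c}(ii) combined with the Hessian lower bound $1-|H(z)|^2 \gtrsim |z|^2$ from Proposition \ref{prop_hess} controls the $L^{-p}$ norm of $F(z_\theta)F(z_{\theta'})$ by a multiple of $|z_\theta - z_{\theta'}|^{-\alpha}$, where $\alpha = 2(p-1)/p$ can be chosen arbitrarily small. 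Integrating against $R^2\,d\theta\,d\theta'$ over the near regime gives
\begin{equation*}
R^{2-\alpha}\!\int_{|\phi|\lesssim L/R}|\phi|^{-\alpha}\,d\phi\ \lesssim\ R^{2-\alpha}(L/R)^{1-\alpha} = L^{1-\alpha}\,R,
\end{equation*}
again of order $R$. Summing the two regimes and the four bilinear terms yields Theorem \ref{th1plus}. The uniformity in $w$ is automatic since all estimates in Section \ref{sec_cov} depend only on the model and not on the center of the disk, which one can also see directly from Remark \ref{rem_shift_charge} by reducing to $w=0$ via a twisted shift.

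The main technical obstacle I anticipate is justifying the Fubini identity $\var[\int X\,dz] = \iint \cov[X(z),X(z')]\,dz\,dz'$: the pointwise variable $X(z)=\mathcal{D}_j F(z)/F(z)$ has $\E|X(z)|^2 = \infty$ (since $\E|F(z)|^{-2}=\infty$), so the two-point covariance cannot be obtained by a direct application of Fubini to the squared integrand. However, the integrated quantity $\charge_{B_R(w)}$ is an integer-valued random variable whose second moment is finite by the Kac-Rice two-point formula, and the covariance bounds established above show absolute integrability of $(\theta,\theta') \mapsto \cov[X(z_\theta),X(z_{\theta'})]$ on the circle. This suggests that one can justify the Fubini step by approximating the boundary integral by Riemann sums along angular partitions and invoking dominated convergence, with the near/far bounds furnishing the uniform dominating function. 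Once this technicality is handled, the geometric computations sketched above complete the argument.
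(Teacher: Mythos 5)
Your proposal is correct and follows essentially the same route as the paper: the covariant Poincar\'e index formula, a bilinear expansion of the variance into boundary integrals of $\cov[\mathcal{D}_jF/F,\mathcal{D}_kF/F]$, Proposition \ref{prop_a} for the far regime, H\"older plus Lemma \ref{lemma_c} and the Hessian lower bound of Proposition \ref{prop_hess} for the near regime, and reduction to $w=0$ by twisted shifts. You also correctly identify the Fubini interchange as the technical point to settle; the paper handles it exactly as you suggest, by establishing absolute integrability of $\E\big|\tfrac{\mathcal{D}_jF(z)}{F(z)}\tfrac{\mathcal{D}_kF(w)}{F(w)}\big|$ over $\partial B\times\partial B$ (its estimates \eqref{zz1} and \eqref{zz5}), the only cosmetic differences being that the paper organizes the near regime into two subcases according to whether $|H(z-w)|\geq 1/2$ and packages your circle parametrization into the convolution lemmas of Section \ref{sec_ce}.
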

\begin{proof}
We restrict attention to the ball $B=B_R(0)$. The conclusions then extend to balls centered at any point because the distribution of $F_0$ is invariant under twisted shifts \eqref{eq_ts}, while all assumptions on $F_1$ also hold for $\mathcal{T}_\xi F_1$ and the constants we rely on are model dependent, cf. Remark \ref{rem_shift_charge}.
	
Using \eqref{eq_charge2} we \emph{formally} expand the variance as
\begin{align}\label{eq_form}
	\begin{aligned}
		4 \pi^2 \cdot
		\var \big[ \charge_{B} \big] &=
		\int_{\partial B} \int_{\partial B}
		\Big(\cov \left[ \tfrac{\mathcal{D}_1 F(z) }{F(z)} ,
		{\tfrac{\mathcal{D}_1 F(w) }{F(w)}} \right] \,dz \,d\bar{w}
		+
		\cov \left[ \tfrac{\mathcal{D}_1 F(z) }{F(z)} ,
		{\tfrac{\mathcal{D}_2 F(w) }{{F(w)}}} \right] \,dz \,dw 
		\\
		&\qquad +
		\cov \left[ \tfrac{\mathcal{D}_2 F(z) }{F(z)} ,
		{\tfrac{\mathcal{D}_1 F(w) }{{F(w)}}} \right] \,d\bar{z} \,d\bar{w}
		+
		\cov \left[ \tfrac{\mathcal{D}_2 F(z) }{F(z)} ,
		{\tfrac{\mathcal{D}_2 F(w) }{{F(w)}}} \right] \,d\bar{z} \,dw
		\Big).
	\end{aligned}
\end{align}
We now derive estimates that justify the expansion \eqref{eq_form} and bound each of the terms in it. We make use of some estimates for convolutions proved in Section \ref{sec_ce}.

Let $L>0$ be larger than the corresponding constants in Corollary \ref{coro1} and Proposition \ref{prop_a}. Let
\begin{align}\label{eq_eps}
\varepsilon := \inf_{0<|z| \leq L} \frac{1-|H(z)|^2}{|z|^2}.
\end{align}
Proposition \ref{prop_hess} and
Assumption \eqref{A4} imply that $\varepsilon>0$, cf. \eqref{eq_H_lower_bound}.

Fix $p \in (1,2)$ and let $q \in (2,\infty)$ be the H\"older conjugate $1/p+1/q=1$. Fix initially $j,k\in\{1,2\}$.

\noindent {\bf Step 1}. Recall that, by Corollary \ref{coro1}, \eqref{eq_p1} holds. Hence,
\begin{align}\label{zz1}
\int_{z \in \partial B} \int_{w \in \partial B} \E \big[ \big|\tfrac{\mathcal{D}_j F(z) }{F(z)} \big|\big] \cdot \E \big[ \big|\tfrac{\mathcal{D}_k F(w) }{F(w)} \big|\big] \, |dz| |dw| \lesssim R^2,
\end{align}
while, by Lemma \ref{lemma_L}  (with $\delta=0$),
\begin{align}\label{eq_y3}
\int_{z \in \partial B} \int_{w \in \partial B, |z-w| \leq L} \E \big[ \big|\tfrac{\mathcal{D}_j F(z) }{F(z)} \big|\big] \cdot \E \big[ \big|\tfrac{\mathcal{D}_k F(w) }{F(w)} \big|\big] \, |dz| |dw| \leq C' L R \lesssim R.
\end{align}

\noindent {\bf Step 2}. We use H\"older's inequality and Lemma \ref{lemma_c} to estimate
\begin{align}\label{y4}
\begin{aligned}
	\E \Big[ \big|\frac{\mathcal{D}_j F(z) }{F(z)} \cdot
	{\frac{\mathcal{D}_k F(w) }{{F(w)}}} \big| \Big]
	&\leq \E \Big[ \big|{\mathcal{D}_j F(z)} \cdot
	\mathcal{D}_k F(w) \big|^q \Big]^{1/q}
	\cdot
	\E \Big[ \big|{F(z)} \cdot
	F(w) \big|^{-p} \Big]^{1/p}
	\\
	&\lesssim \E\Big[ \big|{F(z)} \cdot
	F(w) \big|^{-p} \Big]^{1/p}.
\end{aligned}
\end{align}

Set
\begin{align*}
E_1 &:= \{(z,w) \in \partial B \times \partial B: |z-w| \leq L, |H(z-w)| \geq 1/2\},
\\
E_2 &:= \{(z,w) \in \partial B \times \partial B: |z-w| \leq L, |H(z-w)| < 1/2\},
\\
E_3 &:= \{(z,w) \in \partial B \times \partial B: |z-w| >L\}.
\end{align*}
	
\noindent {\bf Step 3}. Let $(z,w) \in E_1$. We recall \eqref{eq_aaaa} and use \eqref{y4} to further estimate
\begin{align*}
	\E \Big[ \big|\frac{\mathcal{D}_j F(z) }{F(z)} \cdot
	{\frac{\mathcal{D}_k F(w) }{{F(w)}}} \big| \Big]
	&\lesssim \E \Big[ \big|{F(z)} \cdot
	F(w) \big|^{-p} \Big]^{1/p}
	\\
	&\lesssim (1-|H(z-w)|^2)^{1/p-1}
	\leq \varepsilon^{1/p-1} |z-w|^{2(1/p-1)},
\end{align*}
where we used the definition \eqref{eq_eps}. Since $2(1/p-1) > -1$, Lemma \ref{lemma_L} (with $\delta=2(1-1/p)$) gives
\begin{align}\label{zz2}
 \int_{(z,w) \in E_1} \E \Big[ \big|\tfrac{\mathcal{D}_j F(z) }{F(z)} \cdot
 {\tfrac{\mathcal{D}_k F(w) }{{F(w)}}} \big| \Big] \, |dz| |dw|
 \lesssim \int_{z\in\partial B} \int_{\substack{w\in\partial B,\\|z-w| \leq L}} |z-w|^{2(1/p-1)} \, |dw| |dz| \leq CR.
\end{align}
This estimate, together with \eqref{eq_y3}, shows that
\begin{align}\label{yy1}
 \int_{(z,w) \in E_1}
\big|\cov \Big[ \tfrac{\mathcal{D}_j F(z) }{F(z)} ,
{\tfrac{\mathcal{D}_k F(w) }{{F(w)}}} \Big] \big|
\,|dz| \,|dw| \leq C R.
\end{align}

\noindent {\bf Step 4}. For $(z,w) \in E_2$, Lemma \ref{lemma_c} gives $\E\Big[ \big|{F(z)} \cdot F(w) \big|^{-p} \Big]^{1/p} \lesssim 1$. Thus, we combine \eqref{y4} with Lemma \ref{lemma_L} to conclude
\begin{align}\label{zz3}
	\int_{(z,w) \in E_2}
	\E \big[ \big|\tfrac{\mathcal{D}_j F(z) }{F(z)} \cdot
	{\tfrac{\mathcal{D}_k F(w) }{{F(w)}}} \big| \big]
	\,|dz| \,|dw| \lesssim 
	\int_{z \in \partial B} \int_{\substack{w \in \partial B,\\ |z-w| \leq L}} \E\big[ \big|{F(z)} \cdot F(w) \big|^{-p} \big]^{1/p} \, |dz| |dw| \lesssim R.
\end{align}
Together with \eqref{eq_y3}, this shows that
\begin{align}\label{yy2}
	\int_{(z,w) \in E_2}
	\big|\cov \big[ \tfrac{\mathcal{D}_j F(z) }{F(z)} ,
	{\tfrac{\mathcal{D}_k F(w) }{{F(w)}}} \big] \big|
	\,|dz| \,|dw| \leq C R.
\end{align}

\noindent {\bf Step 5}. We combine Proposition \ref{prop_a} and Lemma \ref{lemma_b}  to estimate
\begin{align}\label{yy3}
\begin{aligned}
	\int_{(z,w) \in E_3}
	\big|\cov \Big[ \tfrac{\mathcal{D}_j F(z) }{F(z)} ,
	{\tfrac{\mathcal{D}_k F(w) }{{F(w)}}} \Big] \big|
	\,|dz| \,|dw| &\lesssim \int_{\partial B} \int_{\partial B} (1+|z-w|)^{-2} \, |dz| |dw|
	\\
	&\lesssim \int_{\partial B} \, |dz| \leq CR.
\end{aligned}
\end{align}
In addition, by Corollary \ref{coro1}, \eqref{eq_p1b} holds and
\begin{align}\label{zz4}
\int_{(z,w) \in E_3}	\E \big[ \big|\tfrac{\mathcal{D}_j F(z) }{F(z)} \cdot
{\tfrac{\mathcal{D}_k F(w) }{{F(w)}}} \big| \big]
\,|dz| \,|dw| \lesssim R^2.
\end{align}

\noindent {\bf Step 6}.	
Combining \eqref{yy1}, \eqref{yy2}, and \eqref{yy3} we conclude that
\begin{align}\label{f1}
	\int_{\partial B} \int_{\partial B}
	\big|\cov \Big[ \tfrac{\mathcal{D}_j F(z) }{F(z)} ,
	{\tfrac{\mathcal{D}_k F(w) }{{F(w)}}} \Big] \big|
	\,|dz| \,|dw| \lesssim R, \qquad j,k=1,2.
\end{align}
In addition, combining \eqref{zz2}, \eqref{zz3}, and \eqref{zz4} we conclude that
\begin{align}\label{zz5}
	\int_{\partial B} \int_{\partial B}	\E \big[ \big|\tfrac{\mathcal{D}_j F(z) }{F(z)} \cdot
	{\tfrac{\mathcal{D}_k F(w) }{{F(w)}}} \big| \big]
	\,|dz| \,|dw| \lesssim R^2.
\end{align}
	
Finally, we can justify the expansion \eqref{eq_form}: the interchange of integration and covariance is justified by \eqref{zz1} and \eqref{zz5}. We now bound each of the terms in \eqref{eq_form} with \eqref{f1} to conclude that $\var \big[ \charge_{B} \big] \lesssim R$, as desired.
\end{proof}

 \section{Regularity of Level-Crossing Statistics}\label{sec_reg}

Let $F$ be a \emph{zero-mean} GWHF satisfying the general assumptions. The goal of this section is to show that level-crossing statistics $\#\{z \in K: F(z)=u\}$
depend continuously on the level value $u$ in quadratic mean. For stationary processes, this follows by simply inspecting explicit formulae; see, e.g., \cite{MR0358956}. In our case, we will need to develop estimates for the densities of the level-crossing statistics.

\subsection{Improved covariance bounds in diagonal directions}\label{sec_imp}
Under the general assumptions, the vector $(F(z), F(w))$ is non-degenerate for $z\not=w$ and we will denote its covariance matrix by
\begin{equation}\label{eq_Gamma}
    \Gamma(z,w) = \cov[(F(z),F(w))].
\end{equation} 
By  \eqref{eq_H_lower_bound}, the inverse covariance matrix $\Gamma^{-1} (z,w)$ (henceforth, we will write $\Gamma^{-1}$ and omit the dependency on $z$ and $w$ if the arguments are clear from context) has norm $\lesssim O(|z-w|^{-2})$ (see \eqref{eq_lex} below). We now make the crucial observation that $\Gamma^{-1}$ is much better conditioned when acting on diagonal vectors.
\begin{lemma}\label{lemma_K}
Let $F$ be a \emph{zero-mean} GWHF satisfying the general assumptions. Then for each $R>0$ there exists a model-dependent constant $C_R<\infty$, such that for distinct $z,w \in B_R(0) \subset \mathbb{C}$:
\begin{align}\label{eq_le1}
\var\big[F(z)-F(w)\big] \leq C_R |z-w|^2,
\\\label{eq_le2}
\left| \Gamma^{-1} \cdot (1,1)^\intercal \right| \leq \frac{C_R}{\min\{1,|z-w|\}}.
\\\label{eq_le3}
\left| (1,1) \cdot \Gamma^{-1} \cdot (1,1)^\intercal \right| \leq C_R.
\end{align}
(Here, $\Gamma=\Gamma(z,w)$ is the covariance matrix \eqref{eq_Gamma}.)
\end{lemma}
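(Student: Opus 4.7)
The three estimates all concern the $2\times 2$ covariance matrix
\[
\Gamma(z,w)=\begin{pmatrix} 1 & h(z,w) \\ \overline{h(z,w)} & 1 \end{pmatrix}, \qquad h(z,w):=H(z-w)\,e^{i\Im(z\bar w)},
\]
with $\det\Gamma(z,w)=1-|H(z-w)|^2$. My plan is to first establish \eqref{eq_le1}, which amounts to a quadratic vanishing of $1-\Re h(z,w)$ on the diagonal, and then to derive \eqref{eq_le2} and \eqref{eq_le3} from it by explicit inversion of $\Gamma$ combined with the lower bound \eqref{eq_H_lower_bound}.

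\textbf{Step 1 (proof of \eqref{eq_le1}).} A direct computation using $H(0)=1$ gives
\[
\var[F(z)-F(w)] = 2\bigl(1-\Re h(z,w)\bigr).
\]
Since $|h(z,w)|=|H(z-w)|\leq 1$ by \eqref{A4}, the function $\psi_w(z):=1-\Re h(z,w)$ is non-negative and vanishes at $z=w$; hence $z=w$ is its global minimum in $z$ and $\nabla_z\psi_w(w)=0$. By \eqref{A5}, the second-order $z$-derivatives of $\psi_w$ are continuous in $(z,w)$ and thus uniformly bounded on $\overline{B_R(0)}\times\overline{B_R(0)}$. A second-order Taylor expansion of $\psi_w$ around $z=w$ then gives $\psi_w(z)\leq C_R|z-w|^2$, which is \eqref{eq_le1}.

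\textbf{Step 2 (proof of \eqref{eq_le2} and \eqref{eq_le3}).} Inverting the $2\times 2$ matrix explicitly,
\[
\Gamma^{-1}(1,1)^{\intercal} = \frac{1}{1-|H(z-w)|^2}\begin{pmatrix} 1-h(z,w)\\ 1-\overline{h(z,w)}\end{pmatrix},
\]
which yields the two identities
\[
(1,1)\,\Gamma^{-1}(1,1)^{\intercal} = \frac{2(1-\Re h(z,w))}{1-|H(z-w)|^2}, \qquad \bigl|\Gamma^{-1}(1,1)^{\intercal}\bigr|^2 = \frac{2|1-h(z,w)|^2}{(1-|H(z-w)|^2)^2}.
\]
For \eqref{eq_le3}, I combine Step 1 (numerator $\leq C_R|z-w|^2$) with the lower bound \eqref{eq_H_lower_bound} on the denominator ($\geq c\min\{1,|z-w|^2\}$), splitting the regimes $|z-w|\leq 1$ and $1<|z-w|\leq 2R$ to reach a uniform bound. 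For \eqref{eq_le2}, the key observation is that the elementary inequalities $(1-\Re h)^2\leq 2(1-\Re h)$ (since $|\Re h|\leq 1$) and $(\Im h)^2=(|h|-\Re h)(|h|+\Re h)\leq 2(1-\Re h)$ (since $|h|\leq 1$) give
\[
|1-h|^2=(1-\Re h)^2+(\Im h)^2\leq 4(1-\Re h).
\]
Plugging this into the second identity above, using Step 1 once more and splitting the same two regimes, one obtains $|\Gamma^{-1}(1,1)^{\intercal}|\leq C_R/\min\{1,|z-w|\}$.

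\textbf{Main obstacle.} The substantive part is Step 1. Neither $\nabla H(0)$ nor the derivative of the twist phase $e^{i\Im(z\bar w)}$ vanishes in general, so a quadratic-in-$(z-w)$ bound on $1-\Re h$ relies on a cancellation between them, forced by the constraint $|h|\leq 1$ with equality on the diagonal. The minimum-at-a-zero argument above captures this cancellation in one stroke, bypassing any explicit first-order computation. Once \eqref{eq_le1} is available, \eqref{eq_le2} and \eqref{eq_le3} reduce to routine $2\times 2$ algebra.
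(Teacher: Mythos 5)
Your proof is correct, and the key step is handled by a genuinely different (and arguably slicker) mechanism than the paper's. For the quadratic vanishing of $1-\Re h(z,w)$, the paper Taylor-expands $K(z,w)=H(z-w)e^{i\Im(z\bar w)}$ explicitly to first order in $z$ around $w$, symmetrizes via $2\Re K(z,w)=K(z,w)+K(w,z)$, and verifies by direct computation that $\partial_1K(w,w)-\partial_1K(z,z)=\tfrac{\overline{w-z}}{2}$ is $O(|z-w|)$, so the first-order terms cancel; this also yields $1-K(z,w)=O(|z-w|)$ as a byproduct, which is then fed into \eqref{eq_le2}. You instead observe that $\psi_w(z)=1-\Re h(z,w)\geq 0$ with equality at $z=w$, so the gradient vanishes there by the first-derivative test, and conclude by Taylor with a uniformly bounded Hessian. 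Your route avoids any explicit computation with the twist phase and makes transparent \emph{why} the cancellation must occur (it is forced by $|h|\leq 1$), at the cost of being slightly less explicit. Since you do not get $1-h=O(|z-w|)$ for free, you supply the elementary inequality $|1-h|^2=(1-\Re h)^2+(\Im h)^2\leq 4(1-\Re h)$ to reduce \eqref{eq_le2} to \eqref{eq_le1}; this is correct ($(\Im h)^2=(|h|-\Re h)(|h|+\Re h)\leq 2(1-\Re h)$ indeed uses only $|h|\leq 1$). The remaining $2\times 2$ algebra and the regime splitting against \eqref{eq_H_lower_bound} match the paper's. No gaps.
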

\begin{proof}
Since $z\not= w$, $(F(z),F(w))$ is a non-degenerate jointly Gaussian vector. Specifically, writing 
$K(z,w)=H(z-w)e^{i \Im(z\bar{w})}$, its covariance is
\begin{align}\label{eq_lep} 
\Gamma(z,w) = \begin{bmatrix}
1 & K(z,w)\\
\overline{K(z,w)} & 1
\end{bmatrix}
\end{align}
and has inverse
\begin{align}
\Gamma^{-1}(z,w) = 
\frac{1}{1-|H(z-w)|^2}
\begin{bmatrix}
1 & -K(z,w)\\
-\overline{K(z,w)}  & 1
\end{bmatrix}.
\end{align}
A Taylor expansion in $z$ around $w$ gives 
\begin{align*}
K(z,w)&= 1+ \partial_1K(w,w)(z-w)+ \bar \partial_1K(w,w)\overline{(z-w)}+ O(|z-w|^2)
\end{align*}
where $\partial_1$ and $\bar \partial_1$ denote derivatives with respect to the first variable,
and the implied constants may depend on $R$.
Hence, 
\begin{align*}
2 \Re K(z,w) 
= K(z,w)+K(w,z)
& = 2+ (\partial_1K(w,w)-\partial_1K(z,z))(z-w)
 \\
 & \quad +(\bar \partial_1K(w,w)-\bar \partial_1K(z,z))\overline{(z-w)}+O(|z-w|^2).
\end{align*}
Since $\partial_1 K(w,w)- \partial_1 K(z,z) = \frac{\overline{w-z}}{2}$ and $\bar \partial_1K(w,w)-\bar \partial_1K(z,z) = \frac{z-w}{2}$ are $O(|z-w|)$, we see that
\begin{align}\label{eq_lea}
1- \Re K(z,w) = O(|z-w|^2),
\qquad
1-K(z,w) = O(|z-w|).
\end{align}

Now \eqref{eq_le1} follows
since \[\var\big[F(z)-F(w)\big] = \mathbb{E}\left[|F(z)-F(w)|^2\right] = 2(1- \Re K(z,w)).\]

Second, we compute
\begin{align*}
\Gamma^{-1}(z,w) \cdot (1,1)^\intercal 
=\left(1-|H(z-w)|^2\right)^{-1} \big(1 - K(z,w), 1 - \overline{K(z,w)}\big)^\intercal,
\end{align*}
and
\begin{align*}
(1,1) \cdot \Gamma^{-1}(z,w) \cdot (1,1)^\intercal 
=2 \left(1-|H(z-w)|^2\right)^{-1} \left(1 - \Re K(z,w)
\right).
\end{align*}
By  \eqref{eq_H_lower_bound}, 
\eqref{eq_lea} readily implies \eqref{eq_le2} and \eqref{eq_le3}.
\end{proof}

\subsection{Boundedness of the intensity functions}
We define the \emph{one and two point functions} 
$\rho\colon \mathbb{C} \to [0,\infty]$,
$\tau\colon \mathbb{C}^2 \to [0,\infty]$ (associated with the level $u \in \mathbb{C}$)  by
\begin{align}\label{eq_sp1}
\rho(z,u) &=   \E \left[ |\det DF(z)| \,\big|\, F(z)=u \right] p_{F(z)}(u),
\\
\label{eq_sp2}
\tau(z,w,u)&= \begin{cases}
\E \left[ |\det DF(z)| \cdot |
\det DF(w)|\,\big|\, F(z)=F(w)=u \right] p_{F(z), F(w)}(u,u), &z\not= w
\\
0, &z=w
\end{cases},
\end{align}
where $p_{F(z), F(w)}$ is the joint probability density of $(F(z), F(w))$ and the conditional expectation is defined by Gaussian regression; see, e.g., \cite[Proposition 1.2]{level}. The two point function is well-defined because \eqref{A4} implies that $(F(z),F(w))$ is non-singular for $z\not=w$.

According to the Kac-Rice formula \cite{level, adler}, the one and two point functions provide densities for the first and second factorial moments of the number of level crossings $\{F=u\}$ within a test set. The following key result estimates the two-point function associated with general level crossings.

\begin{prop}\label{prop_r}
Let $F$ be a \emph{zero-mean} GWHF satisfying the general assumptions. Then for each compact set $K \subset \C$ there exists a model-dependent constant $C_K<\infty$, such that 
\begin{align}\label{eq_sg}
\tau(z,w,u) \leq C_K, \qquad z,w,u \in K.
\end{align}
Similarly, for each compact set $K \subset \mathbb{C}$ there exists a model-dependent constant $C_K<\infty$, such that $\sup_{z,u\in K} \rho(z,u) \leq C_K$.
\end{prop}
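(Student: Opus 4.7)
The plan is to apply the Kac--Rice formula in tandem with Gaussian regression, with the key analytic input being the improved covariance bound of Lemma \ref{lemma_K}. Although the joint Gaussian density $p_{F(z),F(w)}(u,u)$ blows up like $|z-w|^{-2}$ as $w\to z$, the conditional expectation of the Jacobian product will be shown to decay at a matching rate; extracting this cancellation is the main task.

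Boundedness of $\rho$ is a direct computation: since $F(z)$ is zero-mean complex Gaussian of unit variance, $p_{F(z)}(u) = \pi^{-1}e^{-|u|^2}\leq\pi^{-1}$; and conditionally on $F(z)=u$, the vector $(\mathcal{D}_1 F(z),\mathcal{D}_2 F(z))$ is Gaussian with mean linear in $u$ and bounded conditional covariance, by Lemma \ref{lem_covariance} together with the non-degeneracy built into the notion of model. Since $\det DF(z)$ is a quadratic form in the first derivatives (which differ from $\mathcal{D}_j F(z)$ by deterministic multiples of $F(z)=u$), its conditional expectation is $O(1+|u|^2)$, uniformly on $K$.

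For $\tau$, the well-separated regime $|z-w|\geq\delta$ is handled analogously: Assumption \eqref{A4} gives uniform invertibility of $\Gamma(z,w)$, so the regression of $(\partial F(z),\bar\partial F(z),\partial F(w),\bar\partial F(w))$ on $(F(z),F(w))$ yields a bounded density and bounded conditional moments on that part of $K\times K\times K$. The principal difficulty is the near-diagonal regime $|z-w|<\delta$, where $\det\Gamma(z,w)\asymp|z-w|^2$ by Proposition \ref{prop_hess}, while the exponential factor of the density remains bounded thanks to $(1,1)\Gamma^{-1}(z,w)(1,1)^\intercal=O(1)$ from Lemma \ref{lemma_K}. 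What is needed there is the bound
\[
\mathbb{E}\bigl[|\det DF(z)|\cdot|\det DF(w)| \,\big|\, F(z)=F(w)=u\bigr] = O(|z-w|^2)
\]
uniformly on compacta. The guiding heuristic is a Rolle-type identity: for $G(t):=F(z+t(w-z))$, the formula $G'(0) = -\int_0^1(1-s)G''(s)\,ds$ (valid when $G(0)=G(1)$) combined with $|G''(s)|\lesssim|z-w|^2$ forces the tangential complex derivative $T_z:=e\,\partial F(z)+\bar e\,\bar\partial F(z)$ (with $e:=(w-z)/|w-z|$) to have conditional $L^2$-size $O(|z-w|)$, and similarly for $T_w$. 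In the orthonormal real frame $(e,ie)$ the first column of $DF(z)$ is precisely $T_z$ viewed as a $2$-vector, so $|\det DF(z)|\leq|T_z|\cdot\|DF(z)\|$, and a Cauchy--Schwarz argument delivers the required $O(|z-w|^2)$.

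To make this rigorous I would compute the conditional mean of $T_z$ by Gaussian regression as $u\cdot[c_z^T(1-K)+c_w^T(1-\bar K)]/(1-|K|^2)$, with $K=K(z,w)=H(z-w)e^{i\Im z\bar w}$, $c_z^T:=\mathrm{Cov}(T_z,F(z))$, and $c_w^T:=\mathrm{Cov}(T_z,F(w))$. Taylor-expanding in $\zeta:=w-z$ and invoking the Hermitian symmetry $\bar\partial H(0)=-\overline{\partial H(0)}$ (a consequence of \eqref{eq_symm}) reveals that the a priori $O(|\zeta|^2)$ contributions to the numerator cancel exactly, leaving $O(|\zeta|^3)$; dividing by $1-|K|^2=\Theta(|\zeta|^2)$ then yields $\mathbb{E}[T_z\mid F(z)=F(w)=u]=O(|z-w|\,|u|)$. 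A parallel expansion bounds the conditional variance of $T_z$ by $O(|z-w|^2)$. The conditional moments of the complementary normal derivative $N_z$ require only the general bounds $\mathrm{Var}(N_z\mid\cdot)\leq\mathrm{Var}(N_z)=O(1)$ together with a routine expansion of its conditional mean. The main technical obstacle is precisely the cancellation-tracking in the Taylor expansion, which must be carried out uniformly over $(z,w,u)\in K\times K\times K$ and over all unit directions $e$; once this is done, Cauchy--Schwarz combined with the density factorization closes the argument and yields $\tau(z,w,u)\leq C_K$.
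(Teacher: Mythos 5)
Your proposal is correct in outline and shares the skeleton of the paper's proof: factor $\tau$ into the joint density (which blows up like $|z-w|^{-2}$ by \eqref{eq_lex}) times the conditional expectation of the Jacobian product, and recover a matching $O(|z-w|^2)$ by observing that conditioning on $F(z)=F(w)$ forces the tangential derivative to be small; the reduction $|\det DF(z)|\le|\partial_v F(z)|\,|\partial_{-iv}F(z)|$ and the Cauchy--Schwarz step are exactly Steps~1--2 of the paper's argument, and your treatment of $\rho$ and of the well-separated regime matches the paper's. Where you genuinely diverge is in how the smallness of the conditional law of $T_z$ is certified. The paper uses the Taylor identity with integral remainder \emph{pathwise}: on the event $F(z)=F(w)=u$, identity \eqref{eq_taylorrw} gives $\partial_vF(z)=-|z-w|\cdot(\text{remainder terms})$ exactly, so the factor $|z-w|$ is extracted deterministically and one only needs $O(1)$ bounds on the conditional moments of the remainders $R_j$ and of $\partial F,\bar\partial F$; these follow from Lemma \ref{lemma_K} via the splitting $\cov[X,F(z)]=\cov[X,F(w)]+\cov[X,F(z)-F(w)]$, plus Dudley and Borell--TIS to control the suprema of second derivatives entering the $R_j$. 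You instead propose to compute $\E[T_z\mid F(z)=F(w)=u]$ and $\var[T_z\mid\cdot]$ by explicit Gaussian regression and to verify a third-order cancellation in the numerator $c_z^T(1-K)+c_w^T(1-\bar K)$. That cancellation is real---it is an automatic consequence of the pathwise identity you relegate to a ``heuristic''---but it is precisely the step you leave unverified, and it is the delicate one: the generic decomposition (using $1-\Re K=O(|\zeta|^2)$ and $c_w^T-c_z^T=O(|\zeta|)$) only yields a numerator of order $|\zeta|^2$, hence $\E[T_z\mid\cdot]=O(|u|)$, and one must check that the order-$|\zeta|^2$ coefficients conspire to vanish, uniformly in the direction $e$ and in $z$. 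Your route buys something (it avoids the Dudley/Borell--TIS machinery, since $T_z$ is a single Gaussian variable whose conditional law is fully determined by regression), but the paper's pathwise route makes the cancellation structural rather than computational; I would recommend promoting your Rolle-type identity from heuristic to proof, since as stated it is already rigorous and eliminates exactly the expansion you identify as the main obstacle.
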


\begin{proof}
We start by considering the bound \eqref{eq_sg} on the second intensity. 
Let us fix $(z,w,u) \in K$ with $z\not=w$. Without loss of generality we assume that $K$ is a ball. Throughout the proof we let $C_K$ denote a finite constant that may depend on $K$ and the model. Other dependencies are noted with further subscripts. The particular values of $C_K$ may change from line to line.

First we consider the factor $p_{F(z), F(w)}(u,u)$ in \eqref{eq_sp2} and show that
\begin{align}\label{eq_s1}
p_{F(z), F(w)}(u,u) \leq C_K |z-w|^{-2}.
\end{align}
Since $z\not= w$, $(F(z),F(w))$ is a non-degenerate jointly Gaussian vector. Specifically, its covariance
is given by \eqref{eq_lep} and its determinant satisfies
\begin{align} \label{eq_lex}
  \det \Gamma(z,w) = 1-|H(z-w)|^2 \geq c \min\{1,|z-w|^2\}.
\end{align}
As a consequence, the probability density 
\begin{align*}
p_{F(z), F(w)}(u,v)  &= \frac{1}{\pi^2\det \Gamma(z,w)}
\exp\Big({-\big(u,v\big)^* \,\Gamma(z,w)^{-1} \, \big(u,v\big)}\Big)
\end{align*}
satisfies \eqref{eq_s1}.

Thus, to prove  \eqref{eq_sg}, it remains to show that the other factor in \eqref{eq_sp2} satisfies
$$
\E\left( |\det(DF(z))| \cdot |\det(DF(w)) |\,\big|\, F(z)=F(w)=u \right)
\leq
C_K |z-w|^{2}.
$$ 
This will be done in several steps.

\noindent {\bf Step 1}. By Cauchy-Schwarz,
\begin{align*}
&\E\left( |\det(DF(z))| \cdot |\det(DF(w)) |\,\big|\, F(z)=F(w)=u \right)
\\
&\qquad \leq \left[
\E\left( |\det(DF(z))|^2\,\big|\, F(z)=F(w)=u \right)\right]^{1/2}
\cdot
\left[
\E\left( |\det(DF(w))|^2\,\big|\, F(z)=F(w)=u \right)\right]^{1/2}.
\end{align*}
Hence, by symmetry in $z$ and $w$, it remains to show that
\begin{align}\label{eq_detsqcond}
    \E\left( |\det(DF(z))|^2\,\big|\, F(z)=F(w)=u \right) \leq C_K |z-w|^2.
\end{align}

\smallskip

\noindent {\bf Step 2}. 
We expand $F$ around $z$ to obtain 
\begin{align}\label{eq_taylor}
F(w)
& =
F(z) + (w-z) \partial F(z) + \overline{(w-z)} \bar \partial F(z)
\\*
& \quad + (w-z)^2 R_1(z,w) + |w-z|^2 R_2(z,w) + \overline{(w-z)}^2  R_3(z,w),
\notag 
\end{align}
where $R_k$ are zero-mean Gaussian random functions resulting from Taylor's theorem with integral remainder:
\begin{align*}
R_1(z,w)&= \int_0^1 \partial^2 F(z+t(w-z)) (1-t) dt, \\
R_2(z,w)&= 2 \int_0^1 \partial \bar \partial F(z+t(w-z)) (1-t) dt, \\
R_3(z,w)&= \int_0^1 \bar \partial^2 F(z+t(w-z)) (1-t) dt. 
\end{align*}
We introduce, for $v \in \C$ with $|v|=1$, the notation 
\begin{align}\label{eq_nota}
\partial_v F(z)= v \partial F(z) + \bar v \bar \partial F(z)
\end{align}
for the corresponding directional derivative.
Considering the unit vector $v := (w-z)/|z-w|$, this allows us to rewrite 
\eqref{eq_taylor} as
\begin{align}\label{eq_taylorrw}
\partial_v F(z)
& =
\frac{F(w) - F(z)}{|z-w|}
 - \frac{(w-z)^2}{|z-w|} R_1(z,w) - |z-w| R_2(z,w) - \frac{\overline{(w-z)}^2}{|z-w|}  R_3(z,w).
\end{align}
Furthermore, in the orthogonal coordinate system $\{v, -iv\}$, the Jacobian of $F$ can be rewritten as:
\begin{align}\label{eq_jacv}
\det DF(z) = |\partial F(z)|^2 -|\bar \partial F(z)|^2 = \mathrm{Im} \Big( \partial_v F(z) \overline{\partial_{-iv}F(z)}\Big),
\quad z \in \C, |v|=1.
\end{align}
Noting that we condition on the event $F(z)=F(w)=u$, we can thus bound the left-hand side in \eqref{eq_detsqcond} as
{\small
\begin{align}\label{eq_detsqcondbound1}
    & \E\left( |\det(DF(z))|^2\,\big|\, F(z)=F(w)=u \right) 
    \\
    & \, \leq 
     \E\Big( \big(\big\lvert R_1(z,w)\big\rvert + \big\lvert R_2(z,w)\big\rvert + \big\lvert R_3(z,w)\big\rvert\big)^2
    \big(\lvert\partial F(z) \rvert + \lvert\bar \partial F(z)\rvert\big)^2\,\big|\, F(z)=F(w)=u \Big) \cdot |z-w|^2.
    \notag 
\end{align}}
It remains to bound the conditional expectation on the right-hand side of \eqref{eq_detsqcondbound1} by a constant that depends only on $K$ and the model.

\smallskip

\noindent {\bf Step 3}. For a multi-index $\alpha=(\alpha_1, \alpha_2)$, we denote $\partial_{(z,\bar{z})}^{\alpha} = \partial^{\alpha_1}  \bar\partial^{\alpha_2}$ and let
\begin{align}
E := \sum_{\alpha: |\alpha| \leq 2}
\sup_{\xi \in K} | \partial_{(z,\bar{z})}^{\alpha} F(\xi)|.
\end{align}
Note that the functions $R_j$ in \eqref{eq_taylor}, as well as $\partial F(z)$ and $\bar \partial F(z)$, satisfy
\begin{align}\label{eq_s6}
|X| \lesssim E, \qquad X \in \{\partial F(z), \bar \partial F(z), R_1(z,w), R_2(z,w), R_3(z,w)\}.
\end{align}
We will show that for $p \geq 1$ 
\begin{align}\label{eq_s5b}
\mathbb{E}[E^p] \leq C_{K,p}.
\end{align}
Let us first fix a multi-index $\alpha$ with $|\alpha| \leq 2$ and set \[L^\alpha(\xi,\chi) := \mathbb{E}\Big[\,\partial_{(z,\bar{z})}^{\alpha} F(\xi) \cdot \overline{\partial_{(z,\bar{z})}^{\alpha} F(\chi)}\,\Big], \qquad \xi,\chi \in K.\]
While we do not need an explicit expression for $L^\alpha$, we see as in Lemma \ref{lem_precov}, that we can exchange expectation and differentiation and the regularity assumption \eqref{A5} on $H$ then implies that $L^\alpha$ is $C^2$ in the real sense.
Thus, for $\xi,\chi$ in the compact domain $K$,
\begin{align}\label{eq_bo1}
&\var[\partial_{(z,\bar{z})}^{\alpha} F(\xi)] = L^\alpha(\xi,\xi) \leq C_K,
\\\label{eq_bo2}
&\var[\partial_{(z,\bar{z})}^{\alpha} F(\xi) -  \partial_{(z,\bar{z})}^{\alpha} F(\chi)] = 
L^\alpha(\xi,\xi) + L^\alpha(\chi,\chi) - 2 \Re [L^\alpha(\xi,\chi)] \leq C_K |\xi-\chi|.
\end{align}
(The second estimate can be improved but this is not important for us).

We shall invoke Dudley's inequality \cite[Theorem 2.10]{level} \cite[Theorem 1.3.3]{adler}, which estimates $\sup_{\xi \in K}| \partial_{(z,\bar{z})}^{\alpha} F(\xi)|$ in terms of the covering number of $K$ with respect to the so-called canonical distance $\big(\var[\partial_{(z,\bar{z})}^{\alpha} F(\xi) -  \partial^\alpha F(\chi)]\big)^{1/2}$. By \eqref{eq_bo2}, we can bound the logarithm of the covering number in question by a constant times the logarithm of the covering number associated with the Euclidean distance. 
Hence, Dudley's inequality implies that
\begin{align*}
\mathbb{E}\big[\sup_{\xi \in K} |\partial_{(z,\bar{z})}^{\alpha} F(\xi)|\big] \leq C_K.
\end{align*}
Due to the Borell-TIS inequality
\cite[Theorem 2.9]{level} \cite[Theorem 2.1.1]{adler}, the previous estimate, together with \eqref{eq_bo1} yields
\begin{align*}
\mathbb{P}\big[ \sup_{\xi \in K}  |\partial_{(z,\bar{z})}^{\alpha} F(\xi)| >t\big] \leq C_K \exp[-c_K t^2], \qquad t \geq 0,
\end{align*}
 which readily gives \eqref{eq_s5b}. 
\smallskip

\noindent {\bf Step 4}. We show that
for $X \in \{\partial F(z), \bar \partial F(z), R_1(z,w), R_2(z,w), R_3(z,w)\}$ the following estimate holds
\begin{align}\label{eq_s5}
\big|\mathbb{E}\big[X\,|\, F(z)=F(w)=u] \big| \leq C_K.
\end{align}
The conditional expectation in question is the expectation of a certain Gaussian vector $Z_X$---defined by Gaussian regression applied to the (zero-mean, circularly symmetric) Gaussian vector $(X,F(z), F(w))$ \cite[Proposition 1.2]{level}. Specifically, the conditional mean is
\begin{align}\label{eq_EZX}
\mathbb{E}[Z_X] = \big(\cov[X,F(z)], \cov[X,F(w)]\big) \cdot \Gamma^{-1}(z,w) \cdot (u,u)^\intercal,
\end{align}
where we identify $\mathbb{C}^2 \sim \mathbb{C}^{1\times 2}$. 
We split the last expression as
\begin{align}\label{eq_expzx}
\begin{aligned}
\mathbb{E}[Z_X] &= \big(\cov[X,F(w)], \cov[X,F(w)]\big) \cdot \Gamma^{-1}(z,w) \cdot (u,u)^\intercal \, + 
\\
&\qquad\qquad
\big(\cov[X,F(z)-F(w)], 0\big) \cdot \Gamma^{-1}(z,w) \cdot (u,u)^\intercal.
\end{aligned}
\end{align}
We first recall that by \eqref{eq_s5b} and \eqref{eq_s6}, $\var[X], \var[F(z)], \var[F(w)] \leq C_K$, and consequently also
$|\cov[X,F(z)]|, |\cov[X,F(w)]| \leq C_K$.
Thus, the first term on the right-hand side of \eqref{eq_expzx} can be estimated with Lemma \ref{lemma_K} as
\begin{align*}
&\left|\big(\cov[X,F(w)], \cov[X,F(w)]\big) \cdot \Gamma^{-1}(z,w) \cdot (u,u)^\intercal \right|
\\
&\qquad\leq \left| \cov[X,F(w)] \right | \cdot |u| \cdot \left| (1,1) \cdot \Gamma^{-1}(z,w) \cdot (1,1)^\intercal \right|
\leq C_K.
\end{align*}
Similarly, we use Lemma \ref{lemma_K} to estimate
the second term on the right-hand side of \eqref{eq_expzx} as
\begin{align*}
&\left|\big(\cov[X,F(z)-F(w)], 0\big) \cdot \Gamma^{-1}(z,w) \cdot (u,u)^\intercal\right|
\\
&\qquad \leq
\left|\cov[X,F(z)-F(w)]\right| \cdot |u| \cdot \left| \Gamma^{-1}(z,w) \cdot (1,1)^\intercal\right|
\\ 
&\qquad \leq \var[X]^{1/2} \cdot \var \big[F(z)-F(w)\big]^{1/2} \cdot |u| \cdot \left|  \Gamma^{-1}(z,w)  \cdot (1,1)^\intercal \right| \leq C_K,
\end{align*}
which finishes the proof of \eqref{eq_s5}.

\smallskip

\noindent {\bf Step 5}. 
We finally show that for every $p \geq 1$ and \[X \in \{\partial F(z), \bar \partial F(z), R_1(z,w), R_2(z,w), R_3(z,w)\}\] the following estimate holds
\begin{align}\label{eq_s2}
\mathbb{E}\big[|X|^p\,|\, F(z)=F(w)=u\big]  \leq C_{p,K}.
\end{align}
This then can be combined with \eqref{eq_detsqcondbound1} and concludes the proof of \eqref{eq_sg}.

First note that we want to bound the $p$-th absolute moment of the vector $Z_X$ described in Step 4.
Its expectation is bounded as in \eqref{eq_s5}
while its variance satisfies
\begin{align*}
\mathrm{Var}[Z_X] \leq \mathrm{Var}[X],
\end{align*}
because the conditional expectation map is an orthogonal projection. On the other hand, by 
\eqref{eq_s6} and \eqref{eq_s5},
\[\mathrm{Var}[X] \lesssim \mathbb{E}[E^2] \leq C_K.\]
Since $Z_X$ is normally distributed, it follows that
\begin{align*}
\mathbb{E}\big[|X|^p\,|\, F(z)=F(w)=u]
= \mathbb{E}\big[|Z_X|^p]
\lesssim \left|\mathbb{E}\big[Z_X]\right|^p + 
\left[\mathrm{Var}\big[Z_X]\right]^{p/2}
\leq C_{K,p}.
\end{align*}
This proves \eqref{eq_s2}.

\smallskip

\noindent {\bf Step 6}. 
Finally, we prove the bound for the one-point function. 
This is significantly easier, as the probability density function $p_{F(z)}(u)$ can be bounded by a constant and, thus, we only have to bound the conditional expectation in \eqref{eq_sp1} by a constant $C_K$. 
Let $z \in K$, write $\det DF(z) = |\partial F(z)|^2 - |\bar\partial F(z)|^2$ and estimate
\begin{align*}
&\E \left[ |\det DF(z)| \,\big|\, F(z)=u \right] \leq 
\E \left[ |\partial F(z)|^2 \,\big|\, F(z)=u \right]
+\E \left[ |\bar \partial F(z)|^2 \,\big|\, F(z)=u \right]
\\
&\quad\lesssim
\left|\E \left[ \partial F(z)\,\big|\, F(z)=u \right]\right|^2
+
\left|\E \left[ \bar\partial F(z)\,\big|\, F(z)=u \right]\right|^2
\\*
&\qquad\qquad
+ \var[\partial F(z)\,|\,F(z)=u]
+ \var[\bar\partial F(z)\,|\,F(z)=u]
\\
&\quad\leq
\left|\E \left[ \partial F(z)\,\big|\, F(z)=u \right]\right|^2
+
\left|\E \left[ \bar\partial F(z)\,\big|\, F(z)=u \right]\right|^2 +
\var[\partial F(z)]
+ \var[\bar\partial F(z)].
\end{align*}
As in Step 2, $\var[\partial F(z)]
+ \var[\bar\partial F(z)] \leq C_K$. In addition,
\begin{align*}
\E \left[ \partial F(z)\,\big|\, F(z)=u \right] = \var[F(z)]^{-1} \cov[F(z), \partial F(z)] u,
\\
\E \left[ \bar\partial F(z)\,\big|\, F(z)=u \right] = \var[F(z)]^{-1} \cov[F(z), \bar\partial F(z)] u.
\end{align*}
Finally, note that $\var[F(z)] = 1$
while $|\cov[F(z), \partial F(z)]| \leq \var[\partial F(z)]^{1/2} \leq C_K$ and similarly 
$|\cov[F(z), \bar \partial F(z)]| \leq C_K$. This shows that $\rho(z,u) \leq C_K$.
\end{proof}
\begin{rem}
Step 5 in the previous proof is somehow analogous to the estimate on Taylor expansions for stationary processes in \cite[Proposition 1]{MR4648512}; see also \cite[Appendix A]{MR3572325}.
\end{rem}

We can now prove that moments of level crossing statistics are continuous at the origin.

\begin{theorem}
\label{th_coro_r}
Under the hypothesis of Proposition \ref{prop_r}, let $N_u(K) := \#\{z\in K: F(z)=u\}$. Then
\begin{align}\label{eq_coro1}
&\lim_{u\to 0} \mathbb{E}\big[N_u(K)\big] = \mathbb{E}\big[N_0(K)\big],
\\\label{eq_coro2}
&\lim_{u\to 0} \mathbb{E}\big[(N_u(K))^2\big] = \mathbb{E}\big[(N_0(K))^2\big],
\\\label{eq_corox}
&\sup_{|u|\leq 1} \mathbb{E} \big[(N_{u}(K))^2\big] < \infty.
\end{align}
\end{theorem}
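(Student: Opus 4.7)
The plan is to deduce all three statements from a Kac-Rice representation of the moments of $N_u(K)$ together with the uniform bounds on $\rho(z,u)$ and $\tau(z,w,u)$ provided by Proposition \ref{prop_r}, and a pointwise continuity argument for the level-dependence of the one- and two-point functions.

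First, I would invoke the Kac-Rice formula (cf. \cite[Theorem 6.3]{level}), whose hypotheses are satisfied under the general assumptions thanks to \eqref{A4p}, \eqref{A5} and Lemma \ref{lem_pi}(i) (which ensures that $u$ is almost surely a regular value). This yields
\begin{align*}
\mathbb{E}\bigl[N_u(K)\bigr] &= \int_K \rho(z,u)\, dA(z),\\
\mathbb{E}\bigl[N_u(K)(N_u(K)-1)\bigr] &= \int_{K\times K} \tau(z,w,u)\, dA(z)\,dA(w),
\end{align*}
and hence $\mathbb{E}\bigl[(N_u(K))^2\bigr] = \int_{K\times K} \tau(z,w,u)\, dA(z)\,dA(w) + \int_K \rho(z,u)\, dA(z)$.

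For \eqref{eq_corox}, I would apply Proposition \ref{prop_r} with the enlarged compact set $K' = K \cup \overline{B_1(0)}$. This gives a model-dependent constant $C_{K'}$ such that $\rho(z,u) \leq C_{K'}$ and $\tau(z,w,u) \leq C_{K'}$ for all $z,w \in K$ and all $|u| \leq 1$. Integration over $K \times K$ gives $\mathbb{E}[(N_u(K))^2] \leq C_{K'}\,|K|^2 + C_{K'}\,|K|$, uniformly in $|u|\leq 1$.

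For the limits \eqref{eq_coro1} and \eqref{eq_coro2}, the plan is to combine dominated convergence (using the uniform bound above as the dominating constant on the compact $K\times K$) with pointwise continuity of $\rho(z,u)$ and $\tau(z,w,u)$ in $u$. The pointwise continuity reduces to showing that the Gaussian densities $p_{F(z)}(u)$ and $p_{F(z),F(w)}(u,u)$ and the conditional expectations $\mathbb{E}\!\left[|\det DF(z)|\,\big|\,F(z)=u\right]$ and $\mathbb{E}\!\left[|\det DF(z)||\det DF(w)|\,\big|\,F(z)=F(w)=u\right]$ are continuous in $u$. The densities are manifestly continuous, being explicit Gaussian densities (the relevant covariance matrices being non-degenerate by \eqref{A4p}). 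For the conditional expectations, Gaussian regression (see, e.g., \cite[Proposition 1.2]{level}) expresses the conditional distribution of the relevant derivative vectors as a Gaussian vector whose mean is affine in $u$ and whose covariance does not depend on $u$; hence the conditional expectation is a continuous function of $u$ (by, say, the translation continuity of Gaussian integrals), and dominated convergence applied once more—using the $L^2$ moment bounds from Step 5 of the proof of Proposition \ref{prop_r}—gives the desired continuity of these conditional expectations as $u \to 0$.

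The main (and only delicate) obstacle is the continuity of the conditional expectations at $u=0$; the uniformity in $z$ (or $z,w$) required to invoke dominated convergence on the outer integrals is exactly what Proposition \ref{prop_r} provides, so once pointwise continuity is in hand the limits \eqref{eq_coro1} and \eqref{eq_coro2} follow immediately.
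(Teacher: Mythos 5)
Your proposal is correct and follows essentially the same route as the paper: Kac--Rice representations of the first two moments, pointwise continuity in $u$ of the one- and two-point intensities (off the diagonal for $\tau$, which is harmless since the diagonal is null), and the uniform bounds of Proposition \ref{prop_r} as the dominating function for dominated convergence and for \eqref{eq_corox}. Your extra care (enlarging the compact set to cover $|u|\leq 1$, and spelling out via Gaussian regression that the conditional law has $u$-affine mean and $u$-independent covariance) only makes explicit what the paper leaves implicit.
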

\begin{proof}
We consider the intensity functions \eqref{eq_sp1}, \eqref{eq_sp2}. The conditional expectations defining these functions have explicit expressions \cite[Proposition 1.2]{level} in terms of the $C^2$-smooth vector $(F(z),F(w))$ and the $C^6$-smooth matrix \[\cov(F(z),F(w),\partial F(z), \partial F(w), \bar\partial F(z),\bar\partial F(w)),\]
which show that $\lim_{u\to 0}\rho(z,u)=\rho(z,0)$ and, for $z\not= w$, $\lim_{u\to 0}\tau(z,w,u)=\tau(z,w,0)$.

On the other hand, the Kac-Rice formulae \cite[Theorems 6.2 and 6.3]{level} provide the representations
\begin{align}\label{eq_coro3}
&\mathbb{E}\big[N_u(K)\big] = \int_K \rho(z,u) \, dA(z),
\\
\label{eq_coro4}
&\mathbb{E}\big[N_u(K)^2-N_u(K)\big]
=
\int_{K\times K} \tau(z,w,u) \, dA(z,w).
\end{align}
(The formulae are applicable because $F$ has $C^2$ paths; each $F(z)$ is a standard normal variable; for $z\not= w$ the vector $(F(z),F(w))$ has non-singular covariance; while Lemma \ref{lem_pi} shows that zeros are almost surely non-degenerate.)

The uniform bound in Proposition \ref{prop_r} then allows us to exchange $\mathbb{E}$ and $\lim_{u\to 0}$ in \eqref{eq_coro3} and \eqref{eq_coro4}
to obtain \eqref{eq_coro1} and \eqref{eq_coro2}. Similarly, Proposition \ref{prop_r} provides bounds for \eqref{eq_coro3} and \eqref{eq_coro4} which readily implies \eqref{eq_corox}.
\end{proof}
\begin{rem}
The analog of Theorem \ref{th_coro_r} for stationary processes follows by direct inspection of explicit formulae for the point intensities; see, e.g., \cite{MR0358956}.
\end{rem}

\subsection{Quadratic convergence of level crossing statistics}
We can now prove that level crossing statistics are continuous in quadratic mean.
\begin{lemma}\label{lem_ae}
Under the hypothesis of Proposition \ref{prop_r}, let $N_u(K) := \#\{z\in K: F(z)=u\}$. Then $\mathbb{E} |N_{u}(K) - N_{0}(K)|^2 \to 0$, as $u \to 0$.
\end{lemma}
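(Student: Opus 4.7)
The idea is to reduce the $L^2$ convergence to convergence of the cross moment $\mathbb{E}[N_u(K)\,N_0(K)]$, and to prove the latter via a Cauchy--Schwarz/Fatou sandwich anchored by the almost sure stability of $N_u(K)$ under small perturbations of $u$.

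The first task is to show that $N_u(K)\to N_0(K)$ almost surely as $u\to 0$. By Lemma~\ref{lem_pi}(i), applied on an exhaustion of $\mathbb{C}$ by compact smooth domains, every zero of $F$ in $\mathbb{C}$ is almost surely non-degenerate and in particular isolated. Combining the Kac--Rice formula with the one-point density bound of Proposition~\ref{prop_r} yields $\mathbb{E}[N_0(\partial K)]=\int_{\partial K}\rho(z,0)\,dA(z)=0$ provided $\partial K$ has vanishing Lebesgue measure (as for the compact sets of interest), so $F$ almost surely has no zero on $\partial K$. On the resulting almost sure event, $\{F=0\}\cap K$ is a finite set $\{z_1,\dots,z_n\}$ lying in the interior of $K$, and the implicit function theorem (using the non-degeneracy $\jac F(z_i)\neq 0$) produces disjoint open neighborhoods $U_i\subset K^{\circ}$ of the $z_i$ and a number $\delta>0$ such that for every $|u|<\delta$ the equation $F=u$ has exactly one solution in each $U_i$. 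Since $|F|$ is bounded below by a positive constant on the compact set $K\setminus \bigcup_i U_i$, shrinking $\delta$ if necessary ensures that $F=u$ has no further solutions in $K$ for $|u|<\delta$. Thus $N_u(K)=N_0(K)$ for $|u|$ sufficiently small (with a threshold depending on the realization), and in particular $N_u(K)\to N_0(K)$ almost surely.

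Given this, one expands
\begin{equation*}
\mathbb{E}|N_u(K)-N_0(K)|^2=\mathbb{E}[N_u(K)^2]+\mathbb{E}[N_0(K)^2]-2\,\mathbb{E}[N_u(K)\,N_0(K)].
\end{equation*}
By \eqref{eq_coro2}, $\mathbb{E}[N_u(K)^2]\to\mathbb{E}[N_0(K)^2]$, so it suffices to prove $\mathbb{E}[N_u(K)\,N_0(K)]\to\mathbb{E}[N_0(K)^2]$. The Cauchy--Schwarz inequality and \eqref{eq_coro2} give
\begin{equation*}
\limsup_{u\to 0}\mathbb{E}[N_u(K)\,N_0(K)]\leq\limsup_{u\to 0}\sqrt{\mathbb{E}[N_u(K)^2]\,\mathbb{E}[N_0(K)^2]}=\mathbb{E}[N_0(K)^2],
\end{equation*}
while applying Fatou's lemma along any sequence $u_n\to 0$ to the almost sure convergence $N_{u_n}(K)\,N_0(K)\to N_0(K)^2$ yields the matching lower bound $\liminf_{u\to 0}\mathbb{E}[N_u(K)\,N_0(K)]\geq\mathbb{E}[N_0(K)^2]$. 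The real obstacle is the almost sure stability in the preceding paragraph---especially ruling out the scenario that zeros of $F-u$ escape from $K$ by crossing $\partial K$---whereas the closing sandwich relies only on the one- and two-point bounds of Proposition~\ref{prop_r} and Theorem~\ref{th_coro_r} and requires no uniform-integrability input beyond \eqref{eq_corox}.
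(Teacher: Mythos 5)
Your proof is correct and follows essentially the same route as the paper: almost-sure convergence of $N_u(K)$ to $N_0(K)$ via non-degeneracy of the zeros and the inverse function theorem, combined with the second-moment convergence \eqref{eq_coro2} from Theorem \ref{th_coro_r}. The only difference is cosmetic: the paper closes by citing the Br\'ezis--Lieb lemma, whereas you inline the elementary Hilbert-space version of that step (Cauchy--Schwarz plus Fatou applied to the cross term $\mathbb{E}[N_u(K)\,N_0(K)]$), and you are slightly more explicit than the paper about ruling out zeros on $\partial K$.
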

\begin{proof}
Let $\{u_k: k \geq 1\} \subset \mathbb{C}$ be an arbitrary sequence with $u_k \to 0$. Let us show that $\mathbb{E} \big[|N_{u_k}(K) - N_{0}(K)|^2\big] \to 0$ as $k \to \infty$. By Theorem \ref{th_coro_r},
$\mathbb{E}\big[(N_{u_k}(K))^2\big] \longrightarrow \mathbb{E}\big[(N_0(K))^2\big]$ as $k \to \infty$. Hence, by the Br\'ezis-Lieb Lemma \cite{MR0699419, MR1817225}, it is enough to show that $N_{u_k}(K) \to N_0(K)$ almost surely. This is a standard argument \cite{adler,level} and we just briefly sketch it.

By Lemma \ref{lem_pi}, the zeros of $F$ are almost surely non-degenerate, cf.\ \eqref{eq_nd}. Fix one such realization of $F$. Then the set $\{F=0\}\cap K$ must be finite. In addition, by the inverse function theorem, around each zero $z$ there is a neighborhood $V(z)$ such that $F\colon V(z) \to F(V(z))$ is a homeomorphism. These neighborhoods can be assumed to be disjoint, and their union is denoted $U$. For all sufficiently large $k$, the equation $F(z)=u_k$ has a unique solution on each $V(z)$ and therefore $N_{u_k}(K) \geq N_0(K)$. On the other hand, suppose that, after passing to a subsequence of $u_k$, one can find solutions $F(z_k)=u_k$ with $z_k \notin U$. After passing to a further subsequence, $z_k \to z_* \in K \setminus U$, while, by continuity $F(z_*)=0$ and therefore $z_* \in U$. 
This contradiction concludes the proof.
\end{proof}

\section{Chaos Expansion on the Complex Plane}\label{sec_chaos}
\subsection{Complex Hermite polynomials}
We will employ the \emph{complex Hermite polynomials} $H_{j,k}$, defined via the generating function identity as
\begin{align*}
e^{u z + v \bar z -uv}
= \sum_{j,k \geq0 } \frac{u^j v^k}{j!k!}H_{j,k}(z, \bar z), \qquad z \in \mathbb{C},
\end{align*}
or, more explicitly, by
\begin{align}\label{eq_hermite}
H_{j,k}(z,\bar z)= \sum_{r=0}^{\min(j,k)}(-1)^r r! \binom{j}{r}\binom{k}{r}z^{j-r}\bar z^{k-r}, \qquad j,k \geq0,
\end{align}
see, e.g., \cite{is16}.
Using the explicit expression, complex Hermite polynomials can be written in terms of associated Laguerre polynomials \cite{is16}. We shall only be interested in the following expression for diagonal index pairs:
\begin{align}\label{eq_lh}
H_{k,k}(z, \bar z) = (-1)^k k! L_k(|z|^2),
\end{align}
where $L_k$ is the standard Laguerre polynomial
\begin{align}\label{eq_lagn}
L_{k}(t)=\sum_{j=0}^{k} (-1)^j \binom{k}{j} \frac{t^j}{j!}.
\end{align} 
Complex Hermite polynomials satisfy the orthogonality relation
\begin{align*}
\int_\C H_{j,k}(z,\bar z) \overline{H_{l,m}(z,\bar z)}e^{-|z|^2} \,\frac{dA(z)}{\pi} = j! k! \delta_{j=l} \delta_{k=m},
\end{align*}
and they form an orthogonal basis of $L^2\big(\mathbb{C},e^{-|z|^2}\,dA(z)/\pi\big)$. By \eqref{eq_lh}, for any radial function $f \in L^2\big(\mathbb{C},e^{-|z|^2}\,dA(z)/\pi\big)$ we have 
\begin{align*}
f(z) = \sum_{k \geq 0} \tfrac{(-1)^k} {k!} a_k H_{k,k}(z,\bar z)
= \sum_{k \geq 0} a_k L_k(|z|^2), \qquad  z\in\mathbb{C} %
\end{align*}
for appropriate coefficients $a_k$.
 
Complex Hermite polynomials were discovered by \cite{ito1952complex} in the context of complex multiple Wiener integrals. They share many properties of standard Hermite polynomials \cite{chen2019complex} \cite{is16} \cite{gh13}. To prove Theorem \ref{th2}, we shall employ a so-called Wiener chaos expansion in complex Hermite polynomials, which is in many ways analogous to the more standard chaos expansion in real Hermite polynomials. 

\subsection{The planar chaos decomposition}\label{sec_plch}
Our presentation is based on Gaussian Hilbert spaces following Janson's book \cite{janson1997gaussian}, although we use somewhat different notation. He presents chaos decompositions in terms of Wick products rather than complex Hermite polynomials, which is just a notational difference. 

Let $F\colon \mathbb{C} \to \mathbb{C}$ be a circularly symmetric Gaussian random function with underlying probability space $(\Omega,\mathbb{P})$, and consider the set of Gaussian variables
\begin{align}\label{eq_defs}
S:=\{ F(z), \mathcal{D}_1F(z), \mathcal{D}_2F(z) \,:\,z \in \C\}.
\end{align}
Let $\mathcal{F}_S$ 
be the sigma-algebra generated by $S$ and $G$ the completion of the linear span of $S$ as a subspace of $L^2(\Omega, \mathcal{F}_S)$. The space $G$, called \emph{the Gaussian space induced by $S$}, is a separable Hilbert space consisting of circularly symmetric complex Gaussian variables.  
Let $\{\xi_n \}_{n=0,1,2,\ldots}$ be some orthonormal basis of $G$. 
Translated to our notation involving complex Hermite polynomials, Proposition 1.34 and Example 3.32 in \cite{janson1997gaussian} tell us that 
\begin{align}\label{eq_basis}
\bigg\{ \prod_{k=0}^\infty \frac{1}{\sqrt{\alpha_k! \beta_k!}} H_{\alpha_k, \beta_k}( \xi_k, \overline{\xi_k}): |\alpha|, |\beta| < \infty \bigg\}
\end{align}
is an orthonormal basis of 
$L^2(\Omega, \mathcal{F}_S)$. Here  $\alpha=(\alpha_k)_{k=0,1,2,\ldots}, \beta=(\beta_k)_{k=0,1,2,\ldots}$ are multi-indices, and we are using the notation $|\alpha|= \sum_k \alpha_k$.

The \emph{$(M,N)$'th chaotic subspace of $L^2(\Omega, \mathcal{F}_S)$} is defined as
\begin{align}\label{eq_ch2}
C_{M,N}= \overline{ \mathrm{span}} \bigg\{
\prod_{k=0}^\infty H_{\alpha_k, \beta_k}( \xi_k, \overline{\xi_k}):
|\alpha| =M, |\beta| = N \bigg\} , \quad N,M \geq 0
\end{align}
and we will denote the orthogonal projection onto $C_{M,N}$ by $Q_{M,N}$.
We therefore have the following \emph{chaos decomposition}:
\begin{align}\label{eq_ch}
L^2(\Omega, \mathcal{F}_S)= \bigoplus_{M,N \geq 0} C_{M,N}.
\end{align}
The decomposition is independent of the choice of the orthonormal basis of $G$; for completeness, we provide a short argument for this fact in Section \ref{sec_chin}.

\subsection{Chaos decomposition of number statistics}\label{sec: chaos decomp}
We now look into the chaotic components of the number statistic
\begin{align*}
N(B) := \# \{z \in B: F(z)=0\}
\end{align*}
associated with a GWHF $F$ and a test set $B \subset \mathbb{C}$. 

This will be done by first considering certain regularized versions of $N(B)$. Let $\chi\colon [0,+\infty) \to [0,+\infty)$ be smooth with $\mathrm{supp}(\chi) \subset [0,1]$ and $\int_0^\infty \chi(t) \, dt =1$. 
For $\epsilon>0$, let 
\begin{align}\label{eq_m1}
\chi_\epsilon(z)= \frac{1}{\pi \epsilon^2} \chi\left(|z|^2/\epsilon^2\right), \qquad z \in \mathbb{C}.
\end{align}
Then $\chi_\epsilon$ is smooth, $\mathrm{supp}(\chi_\epsilon) \subset \bar{B}_\epsilon(0)$ and $\int \chi_\epsilon\,dA=1$.
We define the regularized variables $N^\epsilon(B)$ by 
\begin{align}\label{eq_m2}
N^\epsilon(B) =  \int_B \chi_\epsilon( F(z)) \big| |\mathcal{D}_1 F(z)|^2- |\mathcal{D}_2 F(z)|^2 \big| \,dA(z). 
\end{align}
Note that the expression involves the \emph{covariant Jacobian determinant} 
$|\mathcal{D}_1 F(z)|^2- |\mathcal{D}_2F(z)|^2$
instead of the usual Euclidean Jacobian
$|\partial F(z)|^2- |\bar \partial F(z)|^2$.

The following lemma, proved in Section \ref{sec_plem_lip}, shows that $N^\epsilon(B) \in L^2(\mathcal{F}_S)$.
\begin{lemma}\label{lem_lip}
Let $F$ be a zero-mean GWHF satisfying the general assumptions. Let $\phi\colon \mathbb{C}^3\to\mathbb{C}$ satisfy
\begin{align*}
|\phi(\zeta) - \phi(\zeta')| \leq C |\zeta-\zeta'|(1+|\zeta|^s+|\zeta'|^s), \qquad \zeta,\zeta' \in \mathbb{C}^3, 
\end{align*}
for some constants $C>0$ and $s\geq 1$. Let $B \subset \mathbb{C}$ be a bounded Borel set. For each $n \in \mathbb{N}$ consider a finite cover of $B$ by almost disjoint cubes $Q_1, \dots Q_{L_n}$ with centers $z_1, \ldots, z_{L_n}$ and diameter $1/n$. Then
\begin{align*}
\sum_{k=1}^{L_n} \phi\big(F(z_k),\mathcal{D}_1 F(z_k), \mathcal{D}_2 F(z_k)\big)
\,\big|B \cap Q_k \big|
\longrightarrow \int_B \phi\big(F(z),\mathcal{D}_1 F(z), \mathcal{D}_2 F(z)\big) \, dA(z)
\end{align*}
in $L^2(\mathrm{d}\mathbb{P})$ as $n \to \infty$, where the right-hand side is defined realization-wise. 
\end{lemma}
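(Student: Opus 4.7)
Write $X(z) := \phi(F(z), \mathcal{D}_1 F(z), \mathcal{D}_2 F(z))$ and $\zeta(z) := (F(z), \mathcal{D}_1 F(z), \mathcal{D}_2 F(z))$. Since $F$ is almost surely $C^2$ by \eqref{A5} and $\phi$ is continuous (indeed locally Lipschitz) by hypothesis, $X$ is almost surely continuous on $\bar B$, so the integral on the right is well defined realization-wise. The Riemann sum on the left coincides with $\int_B X(z_{k(z)})\,dA(z)$, where $z_{k(z)}$ is the center of the cube containing $z$ (well defined away from the measure-zero set of cube boundaries). Letting $E_n$ denote the difference between the Riemann sum and the integral, Cauchy--Schwarz gives
\[
|E_n|^2 = \biggl|\int_B \bigl(X(z_{k(z)}) - X(z)\bigr)\, dA(z)\biggr|^2 \leq |B|\int_B \bigl|X(z_{k(z)}) - X(z)\bigr|^2 \, dA(z),
\]
so the task reduces to a uniform pointwise $L^2$-bound on increments of $X$ at scale $1/n$.

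My goal is the quadratic bound $\mathbb{E}|X(z) - X(z')|^2 \leq C_B |z-z'|^2$ uniformly for $z, z' \in B$; taking expectations and applying Fubini to the displayed inequality will then yield $\mathbb{E}|E_n|^2 \leq C_B |B|^2 / n^2 \to 0$. Combining the Lipschitz hypothesis on $\phi$ with Cauchy--Schwarz in the probability space,
\[
\mathbb{E}|X(z) - X(z')|^2 \leq C^2\bigl(\mathbb{E}|\zeta(z)-\zeta(z')|^4\bigr)^{1/2} \bigl(\mathbb{E}(1+|\zeta(z)|^s+|\zeta(z')|^s)^4\bigr)^{1/2}.
\]
The second factor is bounded uniformly in $z, z'$: by Lemma \ref{lem_precov}, each component of $\zeta(z)$ is a zero-mean complex Gaussian whose variance does not depend on $z$ (e.g.\ $\var[\mathcal{D}_j F(z)] = -\mathcal{D}_j\overline{\mathcal{D}_j}H(0)$), so it has finite moments of every order. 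For the first factor, Gaussian hypercontractivity yields $\mathbb{E}|\zeta(z)-\zeta(z')|^4 \lesssim \bigl(\mathbb{E}|\zeta(z)-\zeta(z')|^2\bigr)^2$, reducing matters to the quadratic variance bounds
\[
\mathbb{E}|F(z)-F(z')|^2 \leq C_B|z-z'|^2 \quad \text{and} \quad \mathbb{E}|\mathcal{D}_j F(z) - \mathcal{D}_j F(z')|^2 \leq C_B|z-z'|^2.
\]
The first is precisely \eqref{eq_le1} in Lemma \ref{lemma_K}.

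The main technical step is thus the increment bound for $\mathcal{D}_j F$, $j=1,2$. A clean route is to observe that the covariance $(z,w)\mapsto C_j(z,w) := -\mathcal{T}_w \mathcal{D}_j \overline{\mathcal{D}_j}H(z)$ is $C^2$ in the real sense (the $C^6$-smoothness of $H$ from \eqref{A5} absorbs the two differential operators), and that by Cauchy--Schwarz for random vectors $\Re C_j(z,w) \leq |C_j(z,w)| \leq C_j(z,z) = -\mathcal{D}_j\overline{\mathcal{D}_j}H(0)$, with equality on the diagonal $z=w$. Consequently, for each fixed $z$, the non-negative function $u \mapsto C_j(z,z) - \Re C_j(z, z+u)$ has a minimum at $u=0$, so its gradient there vanishes; its Hessian in $u$ is continuous in $z$ and hence uniformly bounded on the compact set $\{(z,u) : z,z+u\in B\}$, and Taylor's theorem delivers the desired quadratic bound. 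This cancellation of first-order terms along the diagonal is the essence of the argument and mirrors the Taylor expansion carried out for $F$ itself in the proof of Lemma \ref{lemma_K}; it is the only genuinely non-trivial ingredient. With both increment bounds in place, the combination described above gives $\mathbb{E}|E_n|^2 = O(n^{-2})$ and hence the stated $L^2$-convergence.
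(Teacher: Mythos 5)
Your proof is correct and follows essentially the same strategy as the paper's: reduce the $L^2$ error to moment bounds on increments of the Gaussian vector $(F,\mathcal{D}_1F,\mathcal{D}_2F)$, separate the Lipschitz factor from the polynomial-growth factor by Cauchy--Schwarz/H\"older, and use Gaussianity for uniform moments together with smoothness of the covariance kernel for the increments. The only difference is that you prove the sharper quadratic bound $\E|\Psi(z)-\Psi(z')|^2\lesssim |z-z'|^2$ via the vanishing of the gradient of $u\mapsto C_j(z,z)-\Re C_j(z,z+u)$ at the diagonal (yielding a rate $O(n^{-2})$), whereas the paper is content with the cruder linear bound $\lesssim 1/n$ that follows directly from Lipschitz continuity of the covariance kernel, since only convergence to zero is needed.
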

We now show that the regularized number statistic converges in quadratic mean.
\begin{prop}\label{prop_reg}
Let $F$ be a \emph{zero-mean} GWHF satisfying the general assumptions and let $B \subset \mathbb{C}$ be a bounded Borel set. Then $\var N(B) < \infty$ and
\begin{align}\label{eq_r0}
N^\epsilon(B) \to N(B) \mbox{ in } L^2\mbox{ as }\epsilon\to0^+.
\end{align}
\end{prop}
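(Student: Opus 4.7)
The plan is to split the proof into the variance bound and the $L^2$-convergence, with the latter routed through an Euclidean-Jacobian surrogate. Finiteness of $\var N(B)$ is immediate from the Kac--Rice formulae \eqref{eq_coro3}--\eqref{eq_coro4} applied at level $u=0$, combined with the uniform bounds from Proposition~\ref{prop_r}: one obtains $\E[N(B)]\leq C_B|B|$ and $\E[N(B)(N(B)-1)]\leq C_B|B|^2$, hence $\var N(B) < \infty$.

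For the $L^2$ convergence, I would introduce the Euclidean surrogate
\[
\tilde N^\epsilon(B) := \int_B \chi_\epsilon(F(z))\,|\det DF(z)|\,dA(z),
\]
which differs from $N^\epsilon(B)$ only in replacing the twisted Jacobian $\big||\mathcal{D}_1 F|^2-|\mathcal{D}_2 F|^2\big|$ by $|\det DF|$. On the almost sure event provided by Lemma~\ref{lem_pi}, the classical area formula gives $\tilde N^\epsilon(B) = \int_{\mathbb{C}} \chi_\epsilon(u)\,N_u(B)\,dA(u)$, so that Jensen's inequality (since $\chi_\epsilon\geq 0$, $\int\chi_\epsilon=1$) combined with Fubini yields
\[
\E\big|\tilde N^\epsilon(B) - N(B)\big|^2 \leq \int \chi_\epsilon(u)\,\E\big|N_u(B) - N_0(B)\big|^2\,dA(u) \leq \sup_{|u|\leq\epsilon}\E\big|N_u(B)-N_0(B)\big|^2,
\]
which tends to $0$ as $\epsilon\to 0$ by Lemma~\ref{lem_ae} (together with the uniform second-moment control from Theorem~\ref{th_coro_r}).

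To close the gap between $N^\epsilon(B)$ and $\tilde N^\epsilon(B)$, a direct expansion via \eqref{eq_D} produces the identity $\big(|\mathcal{D}_1 F|^2-|\mathcal{D}_2 F|^2\big) - \det DF = -\mathrm{Re}[\bar z F\,\overline{\partial F}] - \mathrm{Re}[z F\,\overline{\bar\partial F}]$, whence the reverse triangle inequality gives $\big|\big||\mathcal{D}_1 F|^2-|\mathcal{D}_2 F|^2\big|-|\det DF|\big|\leq C_B\,|F|\,(|\partial F|+|\bar\partial F|)$ on $B$. Since $|F|\leq\epsilon$ on the support of $\chi_\epsilon\circ F$, this yields $|N^\epsilon(B)-\tilde N^\epsilon(B)|\leq C_B\,\epsilon\, M^\epsilon$ with $M^\epsilon := \int_B \chi_\epsilon(F(z))(|\partial F(z)|+|\bar\partial F(z)|)\,dA(z)$, and the remaining task reduces to showing $\epsilon^2\,\E[(M^\epsilon)^2]\to 0$.

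The principal technical step is this last bound on $\E[(M^\epsilon)^2]$. I would expand it as a double integral over $B^2$ and condition on $(F(z),F(w))$, splitting the $(z,w)$-domain into a near-diagonal piece $|z-w|<\epsilon$ (where the brutal supremum bound $\|\chi_\epsilon\|_\infty\sim\epsilon^{-2}$ combined with the $O(\epsilon^2)$ area of the strip yields only an $O(1)$ contribution) and a bulk piece $|z-w|\geq\epsilon$ (handled by the Taylor-expansion and Borell--TIS machinery already used in the proof of Proposition~\ref{prop_r}, together with the diagonalization of the joint covariance supplied by Lemma~\ref{lemma_K}). The delicate point in the bulk, and the main obstacle, is that, in contrast to Proposition~\ref{prop_r} where conditioning on $F(z)=F(w)=0$ forced two directional derivatives of $F$ along $z-w$ to be small, the single-derivative quantities $|\partial F|,|\bar\partial F|$ in $M^\epsilon$ have transverse components that remain of size $O(1)$, so that the integrand decays only like $|z-w|^{-2}$ and generates an $O(\log(1/\epsilon))$ contribution. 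The two pieces combine to give $\E[(M^\epsilon)^2]=O(\log(1/\epsilon))$, whence $\epsilon^2\E[(M^\epsilon)^2]=O(\epsilon^2\log(1/\epsilon))\to 0$.
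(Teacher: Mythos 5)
Your overall architecture coincides with the paper's: the finiteness of $\var N(B)$ via Kac--Rice and Proposition~\ref{prop_r}, the Euclidean surrogate $\tilde N^\epsilon(B)$, the area formula combined with Theorem~\ref{th_coro_r} and Lemma~\ref{lem_ae} for $\tilde N^\epsilon(B)\to N(B)$, and the pointwise gain $|N^\epsilon(B)-\tilde N^\epsilon(B)|\leq C_B\,\epsilon\, M^\epsilon$ coming from $|F|\leq\epsilon$ on $\mathrm{supp}(\chi_\epsilon\circ F)$ (your exact identity for the difference of the two Jacobians is in fact slightly cleaner than the paper's estimate). The one place you diverge is the final moment bound, and there the paper takes a much shorter route: it writes $\chi_\epsilon(F(z))\lesssim \epsilon^{q-2}|F(z)|^{-q}$ for some $q\in(1,2)$, applies H\"older, and invokes the negative-moment bound of Lemma~\ref{lemma_c}(ii) together with Proposition~\ref{prop_hess} to get $\mathbb{E}[(M^\epsilon)^2]=O(\epsilon^{2q-4})$, i.e.\ $O(\epsilon^{-2\delta})$ for arbitrarily small $\delta>0$. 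Since any $o(\epsilon^{-2})$ bound suffices, this avoids entirely the conditioning machinery you propose. Your route toward the sharper $O(\log(1/\epsilon))$ is viable but costs more: in the bulk you must extend the conditional-moment estimates of Proposition~\ref{prop_r} from the single level $F(z)=F(w)=u$ to two \emph{distinct} small levels $F(z)=u$, $F(w)=v$, which requires controlling $\Gamma^{-1}(z,w)\cdot(u,v)^\intercal$ along the antidiagonal direction $(1,-1)$ as well (this works, using $\mathrm{Im}\,K(z,w)=O(|z-w|)$, but it is not literally supplied by Lemma~\ref{lemma_K}). Also, your near-diagonal accounting is imprecise as stated: bounding \emph{both} factors $\chi_\epsilon(F(z))\chi_\epsilon(F(w))$ by $\|\chi_\epsilon\|_\infty^2\sim\epsilon^{-4}$ over a strip of area $O(\epsilon^2)$ gives $O(\epsilon^{-2})$, which after multiplication by $\epsilon^2$ does \emph{not} vanish; you must retain one factor of $\chi_\epsilon(F(z))$ and integrate it against the bounded density of $F(z)$ (e.g.\ via Cauchy--Schwarz and $\mathbb{E}[\chi_\epsilon(F(z))\Phi(z)^2]=O(1)$) to get the claimed $O(1)$. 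With these two points repaired your argument closes, but the paper's H\"older shortcut renders them unnecessary.
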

\begin{proof}
\noindent {\bf Step 1}. Consider the \emph{Euclidean regularized statistic}
\begin{align*}
\tilde{N}^\epsilon(B) =  \int_B \chi_\epsilon( F(z)) \big| |\partial F(z)|^2- |\bar\partial F(z)|^2 \big| \,dA(z),
\end{align*}
where, in contrast to \eqref{eq_m2} we use the Euclidean Jacobian. Let us show that
\begin{align}\label{eq_r1}
\mathbb{E} \big[\big|\tilde{N}^\epsilon(B) - N(B) \big|^2\big] \longrightarrow 0, \mbox{ as }\epsilon\to0^+.
\end{align}
To prove this, we take an arbitrary sequence $\epsilon_k \to 0^+$ and consider the corresponding limit.

We shall invoke Lemma \ref{lem_ae} and adopt its notation. By the area formula, 
\begin{align*}
\tilde N^{\epsilon_k}(B)= \int_{\C} \chi_{\epsilon_k}(u) N_u(B) \,dA(u)
= \int_{B_1(0)} \chi_1(u) N_{\epsilon_k u}(B) \,dA(u), 
\end{align*}
see, e.g., \cite[Proposition 6.1.]{level}. 
Consequently,
\begin{align*}
\mathbb{E} \Big[\big|
\tilde N^{\epsilon_k}(B) - N(B)\big|^2 \Big]&=
\mathbb{E}\bigg[ \Big|\int_{B_1(0)}  \chi_1(u) \big( N_{\epsilon_k u}(B) - N_0(B) \big) \,dA(u)\Big|^2 \bigg]
\\
&\leq \|\chi_1\|_2^2 \cdot 
\int_{B_1(0)}  \mathbb{E} \Big[ \big| N_{\epsilon_k u}(B) - N_0(B) \big|^2 \Big]\,dA(u).
\end{align*}
The integrand in the last expression is bounded due to Theorem \ref{th_coro_r}, and converges to $0$ pointwise due to Lemma \ref{lem_ae}. Thus, the dominated convergence theorem yields \eqref{eq_r1}. 
\smallskip

\noindent {\bf Step 2}. Let us show that
\begin{align}\label{eq_r2}
\mathbb{E} \big[\big|\tilde{N}^\epsilon(B) - N^\epsilon(B) \big|^2\big] \longrightarrow 0, \mbox{ as }\epsilon\to0^+.
\end{align}
Let us first estimate
\begin{align*}
&\big|\tilde{N}^\epsilon(B) - N^\epsilon(B) \big|
\\
&\quad
\leq \int_B \chi_{\epsilon}(F(z)) \Big|
\big|
|\partial F(z)|^2 - |\bar\partial F(z)|^2\big|
-
\big||\partial F(z) - \tfrac{\bar z}{2} F(z)|^2 - |\bar\partial F(z) + \tfrac{z}{2} F(z)|^2\big|
\Big|\, dA(z)
\\
&\quad
\leq \int_B \chi_{\epsilon}(F(z))
\Big(
\big|
|\partial F(z)|^2 - |\partial F(z) - \tfrac{\bar z}{2} F(z)|^2 \big|
+
\big|
|\bar\partial F(z)|^2 - |\bar\partial F(z) + \tfrac{z}{2} F(z)|^2 \big|
\Big)
\, dA(z)
\\
&\quad 
\lesssim \int_B \chi_{\epsilon}(F(z))
|z| |F(z)| \big( |\partial F(z)| + |\bar\partial F(z)| + |z| |F(z)| \big)
\, dA(z)
\\
&\quad 
\leq C_B \cdot \epsilon \cdot
\int_B \chi_{\epsilon}(F(z)) \Phi(z)
\, dA(z),
\end{align*}
where $\Phi(z) = \big( |\partial F(z)| + |\bar\partial F(z)| + 1 \big)$.

Let $p \in (1,2)$, $p' \in (2,\infty)$ its H\"older conjugate, $1/p+1/p'=1$,
and select $q \in (1,2)$ with $pq <2$.
Our assumptions on $F$ imply that
\begin{align*}
\sup_{z,w \in B}
\big(\mathbb{E} |\Phi(z) \Phi(w)|^{p'}\big)^{1/p'} \leq C'_{B,p}.
\end{align*}
In addition, since $1<pq<2$, by Lemma \ref{lemma_c} and Proposition \ref{prop_hess},
\begin{align*}
\mathbb{E} \big[|F(z) F(w)|^{-pq}\big] \leq C_{pq} \max\big\{1,|z-w|^{2(1-pq)}\big\}, \qquad z,w \in \mathbb{C}.
\end{align*}
We also note that $\chi_{\epsilon}(F(z)) \lesssim \epsilon^{q-2} \lvert F(z)\rvert^{-q}$ and further estimate
\begin{align*}
\mathbb{E} \big[\big|\tilde{N}^\epsilon(B) - N^\epsilon(B) \big|^2\big]
&\leq C^2_B \cdot \epsilon^2 \cdot
\int_{B\times B} \mathbb{E} \big[ \chi_{\epsilon}(F(z)) \chi_{\epsilon}(F(w)) \Phi(z) \Phi(w)\big] \, dA(z)dA(w)
\\
&\lesssim C''_{B,p} \cdot \epsilon^2 \cdot 
\int_{B\times B} \big(\mathbb{E}\big[ |\chi_{\epsilon}(F(z)) \chi_{\epsilon}(F(w))|^p\big]\big)^{1/p} \, dA(z)dA(w)
\\
&\lesssim C''_{B,p} \cdot \epsilon^2 \cdot \epsilon^{2q-4} \cdot 
\int_{B\times B} \big(\mathbb{E} \big[|F(z) F(w)|^{-pq}\big]\big)^{1/p} \, dA(z)dA(w)
\\
&\lesssim C''_{B,p,q} \cdot \epsilon^{2q-2} \cdot
\int_{B\times B} \max\{1,|z-w|^{2(1/p-q)}\} \, dA(z)dA(w).
\end{align*}
The previous integral is finite because $0<2(q-1/p)
<2(2/p-1/p)=2/p <2$, while the power of $\epsilon$ is positive, which proves \eqref{eq_r2}.
\smallskip

\noindent {\bf Step 3}. Finally, we combine \eqref{eq_r1} and \eqref{eq_r2} to conclude that $\var N(B) < \infty$ and obtain \eqref{eq_r0}.
\end{proof}

Lemma \ref{lem_lip} shows that  $N^\epsilon(B) \in L^2(\mathcal{F}_S,\mathrm{d}\mathbb{P})$ and, therefore, by Proposition \ref{prop_reg}, \[N(B)\in L^2(\mathcal{F}_S,\mathrm{d}\mathbb{P}).\] Hence, the number statistic $N(B)$ can be expanded as
\begin{align}\label{eq_chN}
N(B) = \sum_{M,N \geq 0} Q_{M,N}(N(B)), \qquad Q_{M,N}(N(B)) \in C_{M,N},
\end{align}
into chaotic components \eqref{eq_ch2} associated with the Gaussian space induced by $S$. We now calculate the chaos expansion of $N(B)$ explicitly in the case of radial twisted kernels.
\begin{theorem}\label{th_chaos}
Let $F$ be a \emph{zero-mean} GWHF satisfying the general assumptions and with a twisted kernel of the form $H(z)=P(|z|^2)$ where $P\colon \mathbb{R} \to \mathbb{C}$ is $C^6$.
Let $B \subset \mathbb{C}$ be a bounded Borel set and consider the number statistic $N(B)$. 
Then $N(B) \in L^2(\mathcal{F}_S, \mathrm{d}\mathbb{P})$, where $\mathcal{F}_S$ is the $\sigma$-algebra generated by $S$ given by \eqref{eq_defs}. 

Consider the chaos decomposition associated with $F$ and let \eqref{eq_chN} be corresponding expansion of $N(B)$.
Let $N,M \geq 0$. Then $Q_{M,N}(N(B))=0$ for $N\not=M$, while  
\begin{align}\label{eq:chaos_expansion}
Q_{N,N}(N(B)) &= \frac{1}{\pi}\sum_{k+l+j = N} c_{k,l} \int_B L_j\left(|F(z)|^2\right) L_k\left(\frac{|\mathcal{D}_1F(z)|^2}{1/2-\Delta H(0)}\right)L_{l}\left(\frac{|\mathcal{D}_2F(z)|^2}{-1/2-\Delta H(0)} \right)\,
dA(z),
\end{align}
where the integral is defined realization-wise
and 
\begin{align} \label{eq:c_kl_formula}
 c_{k,l} 
= \frac{1}{\pi^2}\int_{\C} \int_{\C} \bigl|(-\Delta H(0) + 1/2)|z|^2- (-\Delta H(0) - 1/2)|w|^2\bigr| L_k(|z|^2)\, L_l(|w|^2)  \,e^{-|z|^2-|w|^2} \,dA(z,w).
\end{align}
\end{theorem}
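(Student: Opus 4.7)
The plan is to take the regularized statistic $N^\epsilon(B)$ from \eqref{eq_m2}, expand its integrand pointwise in a diagonal Hermite/Laguerre basis, identify the chaos components, and then pass to the limit $\epsilon\to 0^+$ using Proposition~\ref{prop_reg}. The key structural observation is that when $H(z) = P(|z|^2)$, all of $\partial H(0),\ \bar\partial H(0),\ \partial^2 H(0),\ \bar\partial^2 H(0)$ vanish, so by Lemma~\ref{lem_covariance} (together with Lemma~\ref{lem_precov} applied at a general $z$) the vector $\big(F(z),\mathcal{D}_1F(z),\mathcal{D}_2F(z)\big)$ consists of three independent circularly symmetric complex Gaussians of variances $1,\ \sigma_1^2:=-\Delta H(0)+\tfrac12,\ \sigma_2^2:=-\Delta H(0)-\tfrac12$, for every $z$.

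Using the normalized copies $Y(z) := \mathcal{D}_1 F(z)/\sigma_1,\ W(z) := \mathcal{D}_2 F(z)/\sigma_2$, the integrand of $N^\epsilon(B)$ is a function of three independent standard complex Gaussians that is radial in each of its three arguments. Hence its $L^2$-expansion in complex Hermite polynomials lives on diagonal indices and, by \eqref{eq_lh}, takes the form of a product of Laguerre expansions:
\begin{align*}
\chi_\epsilon(F(z)) = \sum_{j\geq 0} a_j^\epsilon\, L_j(|F(z)|^2),
\qquad
\bigl|\sigma_1^2|Y(z)|^2-\sigma_2^2|W(z)|^2\bigr| = \sum_{k,l\geq 0} c_{k,l}\, L_k(|Y(z)|^2)\,L_l(|W(z)|^2),
\end{align*}
with $c_{k,l}$ exactly as in \eqref{eq:c_kl_formula} and $a_j^\epsilon = \tfrac1\pi\int_\C \chi_\epsilon(u)L_j(|u|^2)e^{-|u|^2}\,dA(u) \to \tfrac1\pi L_j(0) = \tfrac1\pi$ as $\epsilon\to 0^+$, since $\chi_\epsilon$ is an approximate identity at $0$.

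Multiplying the two series (whose factors involve disjoint, orthogonal Gaussians) and integrating in $z$ yields
\begin{align*}
N^\epsilon(B) = \sum_{j,k,l}a_j^\epsilon c_{k,l}\int_B L_j(|F(z)|^2)\,L_k(|Y(z)|^2)\,L_l(|W(z)|^2)\,dA(z).
\end{align*}
By \eqref{eq_lh}, each factor $L_j(|F(z)|^2) = \tfrac{(-1)^j}{j!}H_{j,j}(F(z),\overline{F(z)})$ lies in the diagonal chaos $C_{j,j}$, and analogously for the other two factors. Because $F(z),\mathcal{D}_1F(z),\mathcal{D}_2F(z)$ are orthogonal in the Gaussian space $G$, products of their Hermite polynomials respect the chaos grading, so each integrand at fixed $z$ lies in $C_{j+k+l,\,j+k+l}$. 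Integrating over $B$ preserves this grading: $C_{N,N}$ is closed, $Q_{M,N}$ is a bounded projection, and Lemma~\ref{lem_lip} lets us realize the $z$-integral as an $L^2$-limit of Riemann sums of elements of $C_{N,N}$. Hence $Q_{M,N}(N^\epsilon(B))=0$ for $M\neq N$ and $Q_{N,N}(N^\epsilon(B))$ equals the $j+k+l=N$ slice of the series above. Finally, Proposition~\ref{prop_reg} gives $N^\epsilon(B)\to N(B)$ in $L^2$, so by continuity of $Q_{M,N}$ we pass to the limit, substitute $a_j^\epsilon\to 1/\pi$, and undo the normalization $|Y|^2=|\mathcal{D}_1F|^2/\sigma_1^2,\ |W|^2=|\mathcal{D}_2F|^2/\sigma_2^2$ to arrive at \eqref{eq:chaos_expansion}.

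The main technical obstacle will be the interchange of summation, spatial integration, and the limit $\epsilon\to 0^+$. Pointwise in $\omega$ the Laguerre expansions converge in the $L^2$-sense over the Gaussian variables, but to legitimize integrating term-by-term over $B$ I expect to first truncate the expansions to finitely many terms, apply Lemma~\ref{lem_lip} so that each truncated statistic is a bona fide $L^2(d\mathbb{P})$ random variable sitting in $\bigoplus_{N\leq N_0}C_{N,N}$, and then control the tails using a uniform (in $\epsilon$) Parseval bound coming from the $L^2(e^{-|u|^2}dA/\pi)$-membership of $\chi_\epsilon$ and of the Jacobian factor together with Lemma~\ref{lemma_c}. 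The orthogonality of chaos components across different $N$ is what makes this tail control effective after integration in $z$.
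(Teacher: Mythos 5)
Your proposal is correct and follows essentially the same route as the paper's proof: the same reduction to independent standardized Gaussians $(F,\mathcal{D}_1F/\sigma_1,\mathcal{D}_2F/\sigma_2)$ via the radiality of $H$, the same Laguerre expansions of $\chi_\epsilon$ and of the Jacobian factor with the coefficients $a_{j,\epsilon}\to 1/\pi$ and $c_{k,l}$, the same use of Lemma~\ref{lem_lip} with Riemann sums to commute $Q_{M,N}$ with the spatial integral, and Proposition~\ref{prop_reg} for the limit $\epsilon\to 0^+$. The interchange issue you flag at the end is handled in the paper by applying the bounded projection $Q_{M,N}$ directly to the $L^2$-convergent series at each Riemann-sum node (which leaves only the finitely many terms with $j+k+l=N=M$) before letting the mesh go to zero, which is a minor bookkeeping variant of your truncation idea.
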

\begin{proof}
\noindent {\bf Step 1}. We have already observed that $N(B) \in L^2(\mathcal{F}_S)$. Let $\varepsilon>0$ and consider the smooth mollifier $\chi_\epsilon$ \eqref{eq_m1} and the regularized number statistic $N^{\epsilon}(B)$ \eqref{eq_m2}.
Since $\chi_\epsilon$ is radial, its expansion in the basis \eqref{eq_basis} has the form
\begin{align*}
\chi_\epsilon(\zeta) = \sum_{j \geq 0} a_{j,\epsilon} (-1)^j \frac{1}{j!} H_{j,j}(\zeta,\bar{\zeta})=\sum_{j \geq 0} a_{j,\epsilon} L_j(|\zeta|^2),
\end{align*}
with convergence in $L^2\big(\mathbb{C},e^{-|\zeta|^2}\,dA(\zeta)/\pi\big)$
and $(a_{j,\epsilon})_{j\geq 0} \in \ell^2$. 
We do not need the exact value of $a_{j, \epsilon}$; we just note that
\begin{align}\label{eq_ajt}
a_{j, \epsilon} \to \frac{1}{\pi} \int_{\C} \delta_0(\zeta) 
L_j(|\zeta|^2)
e^{-|\zeta|^2} \,dA(\zeta) =\frac{1}{\pi}, \qquad \mbox{as } \epsilon \to 0.
\end{align}
We also expand: 
\begin{align*}
\big|(-\Delta H(0) + 1/2)|\zeta_1|^2- (-\Delta H(0) - 1/2)|\zeta_2|^2 \big| 
&= \sum_{k,l  \geq 0 } c_{k,l} L_k(|\zeta_1|^2) L_l(|\zeta_2|^2) 
\end{align*}
with convergence in $L^2\big(\C, \frac{1}{\pi^2} e^{-|\zeta_1|^2-|\zeta_2|^2} dA(\zeta_1) dA(\zeta_2) \big)$.

We thus arrive at the expansion
\begin{align}\label{eq_pc1}
\chi_\epsilon(\zeta_1) \big|(-\Delta H(0) + 1/2)|\zeta_2|^2- (-\Delta H(0) - 1/2)|\zeta_3|^2 \big|
= \sum_{j,k,l\geq0} a_{j, \epsilon} c_{k,l} L_j(|\zeta_1|^2) L_k(|\zeta_2|^2) L_l(|\zeta_3|^2),
\end{align}
with convergence in $L^2\big(\C, \frac{1}{\pi^3} e^{-|\zeta_1|^2-|\zeta_2|^2-|\zeta_3|^2} dA(\zeta_1) dA(\zeta_2) dA(\zeta_3)\big)$.

\smallskip

\noindent {\bf Step 2}. Let $z \in \mathbb{C}$ be arbitrary but fixed. The covariance of the Gaussian vector \[(F(z), \mathcal{D}_1 F(z),\mathcal{D}_2 F(z))\] is given by \eqref{eq:cov_matrix}. Since $H(z)=P(|z|^2)$, it follows that $\partial H(0)=\bar\partial H(0)=\partial^2 H(0) = \bar\partial^2 H(0)=0$. Consequently,
\begin{equation} \label{eq:stand_gauss}
(\xi(z), \xi'(z), \xi''(z)) := 
\bigg( F(z), \frac{\mathcal{D}_1F(z)}{(-\Delta H(0) + 1/2)^{1/2}}, \frac{\mathcal{D}_2F(z)}{(-\Delta H(0) - 1/2)^{1/2}} \bigg)
\end{equation}
is a standard complex Gaussian vector and the map 
\begin{align*}
L^2\big(\C, \tfrac{1}{\pi^3} e^{-|\zeta_1|^2-|\zeta_2|^2-|\zeta_3|^2} dA(\zeta_1) dA(\zeta_2) dA(\zeta_3)\big) 
& \to L^2(\mathrm{d}\mathbb{P})
\\*
f & \mapsto f(\xi(z), \xi'(z), \xi''(z))
\end{align*}
is an isometric embedding. Thus \eqref{eq_pc1} translates into the almost sure equality
\begin{align}
\chi_\epsilon(F(z)) \big||\mathcal{D}_1F(z)|^2- |\mathcal{D}_2F(z)|^2 \big|
= \sum_{j,k,l\geq0} a_{j, \epsilon} c_{k,l} L_j(|\xi(z)|^2) L_k(|\xi'(z)|^2) L_l(|\xi''(z)|^2),
\end{align}
with convergence in quadratic mean for each $z \in \mathbb{C}$.

We shall invoke Lemma \ref{lem_lip} with the functions
\begin{align*}
&\phi_0(\zeta_1,\zeta_2,\zeta_3)=\chi_\epsilon(\zeta_1)
\big| |\zeta_2|^2-|\zeta_3|^2 \big|,
\\
&\phi_{j,k,l}(\zeta_1,\zeta_2,\zeta_3)= L_j(|\zeta_1|^2) L_k(|\zeta_2|^2) L_l(|\zeta_3|^2),
\end{align*}
which satisfy the hypothesis of the lemma (with constants that depend on $j,k,l$). With the notation of Lemma \ref{lem_lip},
\begin{align}\label{eq_opa}
&Q_{M,N}\left[
\sum_{h=1}^{L_n} \chi_\epsilon(F(z_h)) \big||\mathcal{D}_1F(z_h)|^2- |\mathcal{D}_2F(z_h)|^2 \big|\,
\,\big|B \cap Q_h \big|\right]
\\\notag
&\quad=\sum_{h=1}^{L_n} 
\sum_{j,k,l\geq0} a_{j, \epsilon} c_{k,l} 
Q_{M,N}\left[ L_j(\lvert\xi(z_h) \rvert^2) L_{k}(\lvert\xi'(z_h) \rvert^2) L_{l}(\lvert\xi''(z_h) \rvert^2)\right] \,\big|B \cap Q_h \big|
\\\label{eq_opb}
&\quad={\delta_{N=M}} \sum_{h=1}^{L_n} 
\sum_{j+k+l=N} a_{j, \epsilon} c_{k,l}L_j(\lvert\xi(z_h) \rvert^2) L_{k}(\lvert\xi'(z_h) \rvert^2) L_{l}(\lvert\xi''(z_h) \rvert^2) \,\big|B \cap Q_h \big|.
\end{align} 
\smallskip
By Lemma \ref{lem_lip}, as $n \to \infty$, the Riemann sums in \eqref{eq_opa} and \eqref{eq_opb} converge to the corresponding integrals in quadratic mean.
Since $Q_{M,N}$ is continuous in $L^2(\mathrm{d}\mathbb{P})$, we conclude that
\begin{align*}
Q_{M,N}[N^\epsilon(B)] = \delta_{N=M} \int_B
\sum_{j+k+l=N} a_{j, \epsilon} c_{k,l} L_j(|\xi(z)|^2) L_k(|\xi'(z)|^2) L_l(|\xi''(z)|^2) \, dA(z).
\end{align*}
By Proposition \ref{prop_reg}, $Q_{M,N}[N^\epsilon(B)]\longrightarrow Q_{M,N}[N(B)]$ in quadratic mean as $\epsilon\to0^+$, which, together with \eqref{eq_ajt} gives \eqref{eq:chaos_expansion} and the vanishing of the projection for $N\not=M$.
\end{proof}

\section{Non-Hyperuniformity of Uncharged Zeros: Proof of Theorem \ref{th2}} \label{sec_nonhyper}
In this section we prove Theorem~\ref{th2}, regarding the non-hyperuniformity of the uncharged zeros of a \emph{zero-mean} GWHF $F$ with the specific twisted kernel
\begin{equation}\label{eq_H}
H(z) = (1-|z|^2) e^{-\frac 1 2 |z|^2}.
\end{equation}
We analyze the zero set $\{F=0\}$ on the observation disk $B=B_R(0)$ centered at $0$.

\subsection{Lower bound}
Let us consider the chaos decomposition of the number statistic $N(B)$. The gist of the proof is to estimate the second order chaotic component $Q_{2,2}(N(B))$. By the orthogonality of the chaos decomposition (as in~\eqref{eq_ch}), we then have the estimate 
\[
\var[N(B)] \ge \var [Q_{2,2}(N(B))].
\]
We will show that \[\var [Q_{2,2}(N(B))] \gtrsim R^2.\]
(On the other hand, one can check that $\var [Q_{1,1}(N(B))] = O(R)$, which is not enough for our purpose.)

We shall inspect the explicit expression for $Q_{2,2}(N(B))$ given in Theorem \ref{th_chaos}. In our case,
\[
\Delta H(0) = \partial \bar \partial H(0) = -\frac32. 
\]
We follow the notation from Section~\ref{sec: chaos decomp} and specialize~\eqref{eq:stand_gauss} to
\begin{equation}\label{eq_xi}
(\xi(z), \xi'(z), \xi''(z)) = 
\Big( F(z), \tfrac{1}{\sqrt 2} \mathcal{D}_1F(z), \mathcal{D}_2F(z) \Big),
\end{equation}
whose entries are independent and distributed as $\mathcal{N}_\C (0,1)$, for every $z\in \C$. 
By Theorem~\ref{th_chaos}, the projection of $N(B)$ onto $C_{2,2}$ is given by
\begin{equation}\label{eq: Q22}
Q_{2,2}(N(B)) =  \frac{1}{\pi}\int_B \phi(z)\,
dA(z),
\end{equation}
where 
\begin{align}\label{eq: phi}
 \phi(z) &=  \sum_{k+l+j = 2} c_{k,l}L_j\left(|\xi(z)|^2\right) L_k\left(|\xi'(z)|^2\right)L_{l}\left(|\xi''(z)|^2\right)
\\ & = c_{1,0} L_1(|\xi(z)|^2)L_1(|\xi'(z)|^2) + c_{0,1}L_1(|\xi(z)|^2)L_1(|\xi''(z)|^2) + c_{1,1}L_1(|\xi'(z)|^2)L_1(|\xi''(z)|^2) \notag \\
&\qquad + c_{0,0} L_2(|\xi(z)|^2) + c_{2,0} L_2(|\xi'(z)|^2) + c_{0,2} L_2(|\xi''(z)|^2).\notag
\end{align}
Using~\eqref{eq:c_kl_formula}, we compute the coefficients explicitly (see the accompanying notebook \cite{jupy_gwhf_nonhyp}).
\begin{equation}\label{eq_cs}
c_{0,0}= \frac{5}{3}, \quad c_{0,1} = - \frac {1} {9}, \quad c_{1,0}=-\frac{14}{9}, \quad
c_{0,2}=c_{2,0}=\frac {8}{27}, \quad c_{1,1}=-\frac{16}{27}.
\end{equation}
The random variable $Q_{2,2}(N(B))$ has zero mean because it is orthogonal to $1 \in C_{0,0}$. Thus, from~\eqref{eq: Q22},
\begin{equation}\label{eq_stop}
\var [Q_{2,2}(N(B))] = 
\E \left[\frac{1}{\pi^2}\int_B \int_B \phi(z)\phi(w)\, dA(z) dA(w) \right] 
= \frac{1}{\pi^2}\int_B\int_B \E[\phi(z)\phi(w)]\, dA(z) dA(w),
\end{equation}
where the exchange of integral and expectation is justified by noting that, by~\eqref{eq: phi}, $\phi(z)\phi(w)$ is a polynomial in the coordinates of the Gaussian vector $(\xi(z),\xi'(z),\xi''(z),\xi(w),\xi'(w),\xi''(w))$.

The following proposition is an application of the so-called Feynman-diagram method and enables us to compute $\E[\phi(z)\phi(w)]$ term-by-term according to~\eqref{eq: phi}. 

\begin{prop}\label{prop_diag}
Suppose that $(\al,\bl,\cl,\dl)$ is a complex Gaussian random vector, where each coordinate has a standard complex Gaussian distribution, and $\E(\al\overline{\bl})=0$, $\E(\cl\overline{\dl})=0$.
Then:
\begin{enumerate}
\item
$
\E[L_1(|\al|^2) L_1(|\bl|^2) L_1(|\cl|^2) L_1(|\dl|^2)] = 
\Big|\E(\al \overline{\cl})\E(\beta \overline{\dl}) + \E(\al \overline{\dl}) 
\E(\bl \overline{\cl} )\Big|^2 
$,
\item 
$ \E [L_1(|\al|^2) L_1(|\bl|^2) L_2(|\cl|^2) ] = 2 \left| \E(\al \overline{\cl}) \E(\bl \overline{\cl}) \right|^2 $,
\item 
$ \E [L_2(|\al|^2) L_2(|\cl|^2 ) ] =  \left| \E(\al \overline{\cl})\right|^4$.
\end{enumerate}
\end{prop}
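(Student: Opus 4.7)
The plan is to rewrite each factor $L_k(|z|^2)$ in terms of the complex Hermite polynomial $H_{k,k}(z,\bar z)$ using \eqref{eq_lh}, namely $L_1(|z|^2) = -H_{1,1}(z,\bar z)$ and $L_2(|z|^2) = \tfrac{1}{2}H_{2,2}(z,\bar z)$, and then apply the diagrammatic (Wick-type) formula for expectations of products of complex Hermite polynomials evaluated on a jointly circularly-symmetric Gaussian vector \cite[Chapter 3]{janson1997gaussian}. In our diagonal-index situation, this formula reads
\begin{equation*}
\E\bigg[\prod_{i=1}^n H_{k_i,k_i}(\zeta_i,\bar\zeta_i)\bigg] = \sum_\pi \prod_{(i,l)\in\pi}\E[\zeta_i\bar\zeta_l],
\end{equation*}
where $\pi$ ranges over bijections from the multiset of ``$\alpha$-slots'' to the multiset of ``$\bar\alpha$-slots'' (each vertex $i$ contributing $k_i$ of each) with the constraint that no slot of vertex $i$ is paired with a slot of the same vertex---the normal ordering built into $H_{k,k}$ excludes such self-loops.

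With this tool in hand each identity reduces to enumerating the valid matchings. For \textbf{(3)}, the two vertices carry $2$ $\alpha$-slots and $2$ $\bar\alpha$-slots each, so the admissible diagrams are the $2!\cdot 2!=4$ ways to pair the $\alpha$-slots of one vertex with the $\bar\alpha$-slots of the other; each contributes $|\E(\alpha\bar\gamma)|^4$, and the normalization prefactor $\tfrac14$ coming from $L_2=\tfrac12 H_{2,2}$ cancels this count. For \textbf{(2)}, the hypothesis $\E(\alpha\bar\beta)=0$ forces the single $\alpha$- and $\beta$-slots to both pair with the two $\bar\gamma$-slots and, dually, the two $\gamma$-slots to pair one each with $\bar\alpha$ and $\bar\beta$; this yields $2\cdot 2=4$ diagrams, each contributing $|\E(\alpha\bar\gamma)\E(\beta\bar\gamma)|^2$, and the prefactor $\tfrac12$ produces the overall factor $2$. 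For \textbf{(1)}, each of the four vertices has one $\alpha$- and one $\bar\alpha$-slot, so matchings correspond to the $D_4=9$ derangements of $\{1,2,3,4\}$; the orthogonality hypotheses $\E(\alpha\bar\beta)=\E(\gamma\bar\delta)=0$ kill every derangement containing an $\alpha$--$\bar\beta$ or $\gamma$--$\bar\delta$ edge, leaving exactly the four derangements that send $\{1,2\}\to\{3,4\}$ (and hence $\{3,4\}\to\{1,2\}$) under the permutation. Summing their contributions gives
\begin{equation*}
|\E(\alpha\bar\gamma)\E(\beta\bar\delta)|^2 + 2\,\mathrm{Re}\!\big(\E(\alpha\bar\gamma)\E(\beta\bar\delta)\overline{\E(\alpha\bar\delta)\E(\beta\bar\gamma)}\big) + |\E(\alpha\bar\delta)\E(\beta\bar\gamma)|^2,
\end{equation*}
which is precisely the expansion of the right-hand side of \textnormal{(1)} as $|A+B|^2$ with $A=\E(\alpha\bar\gamma)\E(\beta\bar\delta)$ and $B=\E(\alpha\bar\delta)\E(\beta\bar\gamma)$.

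The crucial conceptual ingredient is the diagrammatic identity itself, which follows from the isometric isomorphism between the symmetric Fock space and the Gaussian $L^2$ space and is standard in the complex Gaussian setting. Once it is invoked, the remaining work is essentially bookkeeping---tracking the signs $(-1)^k$ and factorials from the correspondence $L_k \leftrightarrow H_{k,k}$, and the multiplicities of indistinguishable slots at each vertex---so the only real ``obstacle'' is organizing the enumeration carefully enough that no valid diagram is missed and no forbidden self-loop is counted.
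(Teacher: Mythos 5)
Your proposal is correct and follows essentially the same route as the paper: both invoke the Feynman-diagram (Wick) formula for products of complex Hermite polynomials from Janson, translate $L_k$ into $H_{k,k}$ via \eqref{eq_lh}, and then enumerate the admissible pairings, finding four surviving diagrams in each case with the stated normalizing factors. The only cosmetic difference is that in part (1) you organize the count via the nine derangements of $\{1,2,3,4\}$ and discard those killed by $\E(\al\overline{\bl})=\E(\cl\overline{\dl})=0$, whereas the paper argues directly from the bipartite structure of the surviving diagrams.
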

The proof of this proposition is provided in Section~\ref{sec_diag} below. 
Here we apply it to 
$\al, \bl \in \{\xi(z),\xi'(z),\xi''(z)\}$ and $\cl, \dl \in \{\xi(w),\xi'(w),\xi''(w)\}$ such that $\al\neq \bl$, $\cl\neq \dl$.
This allows us to continue from~\eqref{eq_stop}, using the form of $\phi$ given in~\eqref{eq: phi}. The expectation of each term is then expressed (via Proposition~\ref{prop_diag}) in terms of covariances between $(\xi(z),\xi'(z),\xi''(z))$ and $(\xi(w),\xi'(w),\xi''(w))$.
These covariance computations are explicitly carried out using the relations in Lemma~\ref{lem_precov}, and recalling the definitions in \eqref{eq_ts}, \eqref{eq_D}, \eqref{eq_H}, and~\eqref{eq_xi}. We include the details in Appendix~\ref{appendix}. Using~\eqref{eq_cs} and combining all the resulting expressions, we obtain that
\[
\E[\phi(z)\phi(w)]= g(|z-w|^2), \text{ where }
\]
\[
g(s)=
\frac{2}{729} e^{-2s} (2091 - 22110 s + 62628 s^2 - 77836 s^3 + 48325 s^4 - 
   15040 s^5 + 2156 s^6 - 116 s^7 + 2 s^8),
  \]
see the accompanying notebook for symbolic calculations \cite{jupy_gwhf_nonhyp}.
This leads to the explicit bound
\begin{equation}\label{eq: almost LB}
\var N(B) \geq 
\var Q_{2,2}(N(B)) = \frac{1}{\pi^2}\int_B\int_B g(|z-w|^2) dA(z) dA(w)= \frac{1}{\pi^2}\int_{B} \big(g(|\cdot|^2) * 1_{B}\big)(z)\,dA(z), 
\end{equation}
which we now inspect. One can easily verify that
\[
\int_\C g\big(|z|^2\big) \,dA(z) = \pi \int_0^\infty g(s) ds= \frac{7}{81}\pi,
\]
see for example the accompanying notebook \cite{jupy_gwhf_nonhyp}.
For us the important fact is that this number is not equal to zero. We also note that \[ 
\int_\C |z| |g(|z|^2)| dA(z) <\infty.
\]
We shall need the following standard 
estimate, which can be found 
in \cite[Lemma 8.3]{hkr22}.
\begin{lemma} \label{lem:famous}
Let $B_R=B_R(0)$. Let $h\colon \C \to \mathbb{R} $ be integrable with $\int_{\mathbb{C}} h(z)dA(z) \neq 0$ and 
$$
C_h:=\int_\C |z| |h(z)| dA(z) < \infty. 
$$
There exists a universal constant $C$ such that 
$$
\bigg| \int_{B_R} h * 1_{B_R} - \bigg( \int_\C h(z)dA(z) \bigg) |B_R| \bigg| \leq CC_hR, \quad R>0.
$$
\end{lemma}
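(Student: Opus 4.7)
\medskip

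The plan is to reduce the statement to a uniform geometric estimate on how much a disk differs in measure from its translate, and then integrate against $h$. First, by Fubini's theorem,
\begin{align*}
\int_{B_R} (h * 1_{B_R})(z)\,dA(z)
= \int_{B_R}\!\int_{\mathbb{C}} h(w)\, 1_{B_R}(z-w)\,dA(w)\,dA(z)
= \int_{\mathbb{C}} h(w)\, |B_R \cap (B_R + w)|\,dA(w),
\end{align*}
while trivially $\bigl(\int_{\mathbb{C}} h\bigr) |B_R| = \int_{\mathbb{C}} h(w)\, |B_R|\,dA(w)$. Subtracting, the quantity to be bounded equals
\begin{align*}
\int_{\mathbb{C}} h(w)\, \bigl( |B_R \cap (B_R + w)| - |B_R| \bigr)\,dA(w)
= -\int_{\mathbb{C}} h(w)\, |B_R \setminus (B_R + w)|\,dA(w).
\end{align*}

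Next, I would establish the key geometric estimate
\begin{equation}\label{eq_geom}
|B_R \setminus (B_R + w)| \leq C\, |w|\, R, \qquad w \in \mathbb{C},\; R>0,
\end{equation}
with a universal constant $C$. Split into two cases. If $|w| \leq 2R$, then $B_R \setminus (B_R+w)$ is contained in the tubular neighborhood of $\partial B_R$ of width $|w|$ (any point in this set difference lies within distance $|w|$ of the boundary in the direction of $w$), which has area at most $2\pi R \cdot |w| + \pi |w|^2 \leq 6\pi R |w|$. If $|w| > 2R$, then $B_R$ and $B_R + w$ are disjoint, so $|B_R \setminus (B_R+w)| = \pi R^2 < \tfrac{\pi}{2} |w| R$. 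Either way, \eqref{eq_geom} holds for some universal $C$.

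Finally, applying \eqref{eq_geom} and taking absolute values inside the integral gives
\begin{align*}
\biggl| \int_{B_R} h * 1_{B_R} - \Bigl(\int_{\mathbb{C}} h\Bigr) |B_R| \biggr|
\leq \int_{\mathbb{C}} |h(w)|\, C |w|\, R \,dA(w) = C\, C_h\, R,
\end{align*}
which is the desired bound. The only real work is the geometric estimate \eqref{eq_geom}; once that is in hand, the rest is a straightforward Fubini argument. I do not expect any serious obstacle, since this statement is a standard boundary-vs-bulk estimate of the type routinely used in fluctuation calculations. Note that the hypothesis $\int h \neq 0$ is not actually used in the proof of the inequality itself; it is needed only to make the statement meaningful (the leading-order term is nonzero so that the $O(R)$ error is genuinely subleading relative to the $O(R^2)$ main term).
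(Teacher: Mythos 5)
Your proof is correct. The paper itself does not prove this lemma — it only cites \cite[Lemma 8.3]{hkr22} as a standard estimate — and your argument (Fubini to reduce to $\int h(w)\,|B_R\setminus(B_R+w)|\,dA(w)$, followed by the boundary-layer bound $|B_R\setminus(B_R+w)|\leq C|w|R$) is exactly the standard proof of that result; all steps, including the Fubini justification via $h\in L^1$ and the case split on $|w|$, are sound, and your remark that $\int h\neq 0$ is not needed for the inequality is also accurate.
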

Applying Lemma~\ref{lem:famous}, we may continue from~\eqref{eq: almost LB} to conclude that
\begin{align}\label{eq: end}
\var N(B) &\ge
\frac{1}{\pi^2}\int_{B_R} \big(g(|\cdot|^2) * 1_{B_R}\big)(z)\,dA(z) \notag
\\
&= \frac{1}{\pi^2}\int_{\C} g\big(|z|^2\big) \,dA(z) \cdot \pi R^2 + O(R) = \frac{7}{81} R^2 + O(R) \geq c R^2
\end{align}
for large enough $R$ and a strictly positive constant $c$. 
To conclude the proof of the lower bound, we need to observe that $\var N(B_R) >0$ for any $R>0$. This is the case, in general, for any Euclidean stationary point process on the plane with positive and finite first intensity (see, e.g., Lemma \ref{lemma_R} below). In our case $\{F=0\}$ is stationary because $F$ has zero mean (and is invariant under twisted shifts), while the first intensity of the zero set is finite and $\geq \tfrac{1}{\pi}$, as shown in \cite[Theorem 1.6]{hkr22}.

\subsection{Upper bound}

We now look into an upper bound for the variance of the number statistic.

Let $z \in \C \setminus\{0\}$, consider the Gaussian vector
\begin{align}\label{eq_X}
X(z) = (\mathcal{D}_1 F(z), \mathcal{D}_2 F(z),
\mathcal{D}_1 F(0), \mathcal{D}_2 F(0), F(z), F(0)),
\end{align}
and write its covariance as
\begin{align}\label{eq_split}
\Gamma(z) = \begin{bmatrix}
A & B \\
B^* & C
\end{bmatrix}
\end{align}
with $A \in \C^{4\times 4}$. Let $Z$ be a standard complex Gaussian vector in $\C^{4}$ (so that $\cov(Z)=I \in \C^{4 \times 4}$). The two point function \eqref{eq_sp2} can be expressed by Gaussian regression as
\begin{align*}
\tau(z,0,0) = 
(1-|H(z)|^2)^{-1}\cdot
\mathbb{E} \big[ f((A-BC^{-1}B^*)^{1/2} Z)
\big], \quad z\not= 0,
\end{align*}
where $f(u_1, \dots, u_4)=\tfrac{1}{\pi^2}\big||u_1|^2-|u_2|^2 \big| \cdot \big||u_3|^2-|u_4|^2 \big|$. We note that
\begin{align}\label{eq_xss}
|f(u) - f(u')| \lesssim |u-u'| \cdot (1+|u|^s+|u'|^s),
\qquad  u,u' \in \C^4,
\end{align}
and $|f(u)| \lesssim (1+|u|^s)$ for some $s>0$.

Let $\tilde{\Gamma}$ be obtained from $\Gamma$ by replacing with $0$ every entry that corresponds to correlations between a function of $z$ and a function evaluated at $0$. Thus $\tilde{\Gamma}$ is the covariance of a Gaussian vector defined analogously to \eqref{eq_X} but replacing $(\mathcal{D}_1 F(z), \mathcal{D}_2 F(z), F(z))$ and $(\mathcal{D}_1 F(0), \mathcal{D}_2 F(0), F(0))$ with two independent copies of those vectors. As a consequence, the one point function \eqref{eq_sp1} satisfies
\begin{align*}
\rho(z,0) \rho(0,0) = 
\mathbb{E} \big[ f((\tilde{A}-\tilde{B} \tilde{C}^{-1}\tilde{B}^*)^{1/2} Z)
\big],
\end{align*}
where $\tilde{\Gamma}$ is split into blocks analogous to \eqref{eq_split}. (Actually, $\tilde C=I$.)

By the fast decay of $H$ and its derivatives,
\begin{align*}
\|\Gamma - \tilde{\Gamma}\| \lesssim e^{-2c|z|^2},
\qquad \|\Gamma\|, \|\tilde{\Gamma}\| \lesssim 1,
\end{align*}
for some constant $c>0$. It follows that there exists $L>0$ such that for $|z| > L$, $\|C^{-1}\| \lesssim 1$, and
\begin{align*}
(1-|H(z)|^2)^{-1} = 1 + O(e^{-c|z|^2}).
\end{align*}
Taking into account that the square-root operation is continuous on the set of positive matrices, this implies that, for $|z|>L$,
\begin{align*}
\|(A-BC^{-1}B^*)^{1/2} - (\tilde{A}-\tilde{B} \tilde{C}^{-1}\tilde{B}^*)^{1/2}\| \lesssim e^{-c|z|^2},
\end{align*}
and, by \eqref{eq_xss},
\begin{align*}
&\big|\tau(z,0,0) - \rho(z)\rho(0) \big|
\\
&\,\leq \mathbb{E}
\Big[\big| f((A-BC^{-1}B^*)^{1/2} Z) - f((\tilde{A}-\tilde{B} \tilde{C}^{-1}\tilde{B}^*)^{1/2} Z) \big|
+
O(e^{-c|z|^2}) \big| f((A-BC^{-1}B^*)^{1/2} Z)\big| \Big]
\\
&\, \lesssim
\mathbb{E}
\Big[\big(\big| (A-BC^{-1}B^*)^{1/2} Z - (\tilde{A}-\tilde{B} \tilde{C}^{-1}\tilde{B}^*)^{1/2} Z) \big| + O(e^{-c|z|^2})\big)
\,\cdot \,
\\*
&\quad\qquad\qquad\qquad
\Big(1+\big| (A-BC^{-1}B^*)^{1/2} Z \big|^s+\big|(\tilde{A}-\tilde{B} \tilde{C}^{-1}\tilde{B}^*)^{1/2} Z) \big|^s \Big)
\Big]
\\
&\, \lesssim 
e^{-c|z|^2} \mathbb{E} \big[ 1+|Z|^{s+1} \big] \lesssim e^{-c|z|^2}.
\end{align*}
On the other hand, for $|z| \leq L$ we can invoke Proposition \ref{prop_r} and conclude that
\begin{align}\label{eq_b2p}
\big|\tau(z,0,0) - \rho(z)\rho(0) \big|
\lesssim e^{-c|z|^2}, \qquad z \in \C.
\end{align}
Since $F$ has zero mean, the point process $\{F=0\}$ is stationary and
\begin{align*}
\rho(z,0)=\rho(0,0), \qquad \tau(z,w,0)=\tau(z-w,0,0), \qquad z,w \in \mathbb{C},
\end{align*}
(this is special of the zero statistic; it does not apply to the more general $u$-crossings considered in Section \ref{sec_reg}.)
We next write the variance of the number statistic with help of Kac-Rice formulae, whose application was justified in the proof of Theorem~\ref{th_coro_r}, cf.\ \eqref{eq_coro3} and \eqref{eq_coro4},
\begin{align*}
&\mathbb{E}\big[N(B)\big] = \int_{B} \rho(z,0)\,dA(z) =  \rho(0,0) \cdot |B|,
\\
&\left(\mathbb{E}\big[N(B)\big]\right)^2 = \int_{B\times B} \rho(z,0)\rho(w,0) \, dA(z)dA(w)
=  \rho(0,0)^2 \cdot |B|^2 ,
\\
&\mathbb{E}\big[N(B)^2-N(B)\big]
=
\int_{B\times B} \tau(z,w,0) \, dA(z)dA(w)
= \int_{B\times B} \tau(z-w,0,0) \, dA(z)dA(w).
\end{align*}
Hence,
\begin{align}\label{eq_nh1}
\var[N(B)] =  \rho(0,0) \cdot |B| + 
\int_{B\times B} \left(\tau(z-w,0,0) - \rho(0,0)^2 \right)\, dA(z)dA(w).
\end{align}
Using \eqref{eq_b2p} and Lemma \ref{lem:famous} we estimate
\begin{align*}
\Big|\int_{B\times B} \left(\tau(z-w,0,0) - \rho(0,0)^2 \right)\, dA(z,w) \Big| &\lesssim
\int_{B\times B} e^{-c|z-w|^2} \, dA(z)dA(w)
\\
&= \alpha \cdot |B| + O(R),
\end{align*}
where $\alpha := \int_{\C} e^{-c|z|^2} \,dA(z)$.
Combining this with \eqref{eq_nh1} gives
\begin{align*}
\var[N(B)] \leq  (\rho(0,0)+\alpha) \cdot \pi R^2 + O(R) \leq C R^2,
\end{align*}
for some constant $C>0$.
This completes the proof of Theorem \ref{th2}. \qed

\subsection{Proof of Proposition~\ref{prop_diag}}\label{sec_diag}
The proof relies on the well-known diagram method (see Janson~\cite{janson1997gaussian}), which has been previously used in somewhat similar situations, e.g., in ~\cite{buckley2017fluctuations}, \cite{sodin2004random}, 
\cite{buckley2022gaussian}. 
We briefly recall this method.

Let $(\al_1,\ldots,\al_k)$ be a complex Gaussian random vector, normalized so that $\al_r\sim\mathcal{N}_\C(0,1)$ for all $1 \le r\le k$, and let $i_1,\dots i_k,j_1,\dots,j_k\in \mathbb N$. A \emph{complete Feynman diagram} is a graph with $\sum_{r=1}^k (i_r+j_r) $ vertices labelled by $\{\al_r, \overline{\al_r}\}_{r=1}^k$, such that:
\begin{itemize}
\item there are exactly $i_r$ vertices labelled by $\al_r$, and $j_r$ vertices labelled by $\overline{\al_r}$.
\item each vertex has degree $1$.
\item no edge joins a vertex with another vertex of the same label, or of its conjugate label.
\end{itemize}
The value of a diagram $\Gamma$ is defined as
\[
v(\Gamma) = \prod_{(a,b)\in E(\Gamma)} \E (a \,b),
\]
where $E(\Gamma)$ is the set of edges of $\Gamma$. 
Then, by~\cite[Theorem 3.12]{janson1997gaussian}, we have\footnote{
The relation to complex Hermite polynomials is given 
in~\cite[Example 3.31]{janson1997gaussian}.}
\[
\E \left[ H_{i_1,j_1}(\al_1, \overline{\al_1} ) \cdot \ldots\cdot H_{i_k,j_k}(\al_k, \overline{\al_k})  \right]
 = \sum_\Gamma v(\Gamma),
\]
where the sum is over all complete Feynman diagrams $\Gamma$, and $H_{i,j}$ are the complex Hermite polynomials introduced in~\eqref{eq_hermite}.
In the case where $i_r=j_r$ for all $r$, we may use Laguerre polynomials via~\eqref{eq_lh}, to obtain:
\begin{equation}\label{eq_lag}
\E \left[ L_{i_1}(|\al_1|^2) \cdots L_{i_k}(|\al_k|^2) \right]
 = \frac{(-1)^{\sum_r i_r}}{\prod_r i_r!}\sum_\Gamma v(\Gamma).
\end{equation}
Proposition~\ref{prop_diag} can now be proved by using formula~\eqref{eq_lag} with a suitable set of diagrams in each part. 
In the first part, we need to compute the product of four elements:
\[
\E[ L_1(|\al|)^2 L_1(|\bl|^2) L_1(|\cl|^2) L_1(|\dl|^2)].
\]
We illustrate the associated diagram counting in Fig.~\ref{fig:diagram}.
\begin{figure}
    \centering
    \includegraphics{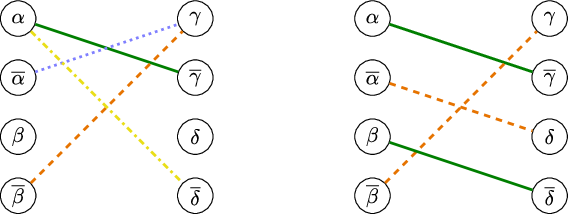}
    \caption{Diagram counting. 
    \textit{Left:} One must choose an edge connecting $\al$ either to $\overline\cl$ or $\overline{\dl}$ and another edge connecting $\cl$ to either $\overline{\al}$ or $\overline{\bl}$. 
    \textit{Right:} The diagram resulting from connecting $\al$ to $\overline{\cl}$ and $\cl$ to $\overline{\bl}$.}
    \label{fig:diagram}
\end{figure}
In the corresponding diagrams each label among $\{\al,\overline{\al},\bl, \overline{\bl}, \cl,\overline{\cl},\dl, \overline{\dl} \}$ appears exactly once. 
We note that all relevant diagrams are bi-partite graphs, with edges between  $V_1=\{\al,\overline{\al},\bl, \overline{\bl}\}$
and $V_2=\{\cl,\overline{\cl},\dl, \overline{\dl}\}$. Indeed, there are no edges within $V_1$ since, by definition, there are no edges between a label and its conjugate, and by independence, there are no edges between $\{\al,\overline{\al}\}$ and $\{\bl,\overline{\bl}\}$.
For the same reasons, there are no edges within $V_2$. We further note that $\E(\al \cl) =0$ for any circularly symmetric complex Gaussian random vector $(\al,\cl)$. Since the degree of each vertex must be $1$, 
there must be an edge joining $\al$ with $\overline{\cl}$ or with $\overline{\dl}$; the remaining vertex must be joined to $\bl$.
Similarly, there must be an edge joining $\cl$ with
$\overline{\al}$ or with $\overline{\bl}$; the remaining vertex will be joined with $\dl$. These choices are independent. Therefore, the sum of values of the resulting four diagrams is
\[
 \left(\E(\al \overline{\cl})\E(\bl \overline{\dl}) + \E(\al \overline{\dl}) 
\E(\bl \overline{\cl} ) \right)
\left(
\E(\cl \overline{\al}) \E(\dl \overline{\bl}) + 
\E(\cl \overline{\bl}) \E(\dl \overline{\al})
\right)
 = \Big|\E(\al \overline{\cl})\E(\beta \overline{\dl}) + \E(\al \overline{\dl}) 
\E(\bl \overline{\cl} )\Big|^2.
\]
Plugging this into \eqref{eq_lag}, we obtain the first item.

For the second part, we need to compute
\[
\E [L_1(|\al|^2) L_1(|\bl|^2) L_2(|\cl|^2) ].
\]
The corresponding diagrams are bi-partite graphs, with edges between $V_1 = \{\al,\overline{\al}, \bl, \overline{\bl}\}$ and 
$V_2 = \{\cl, \overline{\cl}, \cl, \overline{\cl}\}$ (that is, $V_2$ is a set of four vertices carrying two labels of $\cl$ and two labels of $\overline{\cl}$). By the same arguments as before, 
$\al$ must be joined to one of two copies of $\overline{\cl}$ (and the remaining copy must be joined to $\bl$), and the first copy of $\cl$ must be joined to either $\overline{\al}$ or $\overline{\bl}$ (and the remaining vertex must be joined to the second copy of $\cl$). These four diagrams have the value
\[
\E(\al\overline{\cl})\E(\bl\overline{\cl})\E (\overline{\al}\cl)
\E(\overline{\bl}\cl) = \left| \E(\al \overline{\cl}) \E(\bl \overline{\cl}) \right|^2.
\]
Considering the normalizing factor in~\eqref{eq_lag}, we establish the second item.

For the computation of $\E [ L_2(|\al|^2) L_2(|\cl|^2)]$ in the 
third part, the corresponding diagrams are bi-partite graphs with edges between $V_1 = \{ \al,\overline{\al},\al,\overline{\al}\}$ and $V_2 = \{\cl, \overline{\cl}, \cl, \overline{\cl}\}$. Again there are four diagrams, resulting from a choice of edge joining the first copy of $\al$ to one of the two copies of $\overline{\cl}$, and an edge joining the first copy of $\cl$ to one of the two copies of $\overline{\al}$. The value of each such diagram is $|\E(\al\overline{\cl})|^4$. The normalizing factor in~\eqref{eq_lag} is $\tfrac {1}{2! 2!}$, which yields the result.

\section{Applications}\label{sec_app}
\subsection{Gaussian entire functions}
\label{sec_gef_p}

We start by identifying Gaussian entire functions and their iterated covariant derivatives as GWHF.
\begin{lemma}\label{lemma_id}
Let $G=G_0+G_1$, where $G_0$ is the translation-invariant GEF \eqref{eq_g0} and $G_1\colon \C \to \C$ is entire with 
\begin{align}\label{eq_iee}
\sup_{z\in\C} |G_1(z)| e^{-\tfrac{1}{2}\abs{z}^2} < \infty.
\end{align}
Consider the iterated covariant derivative 
$\big(\bar \partial^*\big)^{n} G =
\big(\bar z -\partial)^{n} G$ with $n \in \mathbb{N}_0$ and set
\begin{align*}
F^{(n)}(z) = \frac{e^{-\tfrac{1}{2}\abs{z}^2}}{\sqrt{n!}}\big(\bar \partial^*\big)^{n} G(z).
\end{align*}
Then $F^{(n)}$ is a GWHF with twisted kernel
\begin{align}\label{eq_H_pure}
	H^{(n)}(z)= L_{n}(|z|^2) \cdot e^{-\tfrac{1}{2}\abs{z}^2},
	\end{align}
	where $L_{n}$ is the Laguerre polynomial of degree $n$ \eqref{eq_lagn}. In addition, conditions \eqref{A1}, \eqref{A2}, \eqref{A3}, \eqref{A4}, \eqref{A5}, and \eqref{A6} are satisfied.
\end{lemma}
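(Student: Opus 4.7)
I would decompose $F^{(n)}=F_0^{(n)}+F_1^{(n)}$ along $G=G_0+G_1$, where $F_j^{(n)}:=\frac{e^{-|z|^2/2}}{\sqrt{n!}}(\bar\partial^*)^n G_j$ for $j=0,1$, and handle in turn the covariance of the Gaussian part $F_0^{(n)}$, the regularity bound \eqref{A1} for the deterministic mean $F_1^{(n)}$, and the remaining structural conditions on $H^{(n)}$.

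For the covariance, starting from $\mathbb{E}[G_0(z)\overline{G_0(w)}]=e^{z\bar w}$ and interchanging expectation with the (smooth) differential operators, one gets
\[\mathbb{E}\bigl[F_0^{(n)}(z)\overline{F_0^{(n)}(w)}\bigr]=\frac{e^{-|z|^2/2-|w|^2/2}}{n!}\,A_z^n\,\bar A_w^n\, e^{z\bar w},\]
with $A_z:=\bar z-\partial_z$ acting in $z$ and $\bar A_w:=w-\bar\partial_w$ being the complex conjugate, acting in $w$, of $\bar w-\partial_w$. The identity $A_z e^{z\bar w}=(\bar z-\bar w)e^{z\bar w}$ gives $A_z^n e^{z\bar w}=(\bar z-\bar w)^n e^{z\bar w}$; applying $\bar A_w^n$ and iterating (this is the standard creation/annihilation computation producing the reproducing kernel of the $n$-th Landau level) yields
\[A_z^n\,\bar A_w^n\, e^{z\bar w}= n!\,L_n(|z-w|^2)\,e^{z\bar w}.\]
Combining this with $e^{z\bar w-|z|^2/2-|w|^2/2}=e^{-|z-w|^2/2}e^{i\Im(z\bar w)}$ produces the twisted stationary covariance $H^{(n)}(z-w)\,e^{i\Im(z\bar w)}$ with $H^{(n)}$ as claimed.

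For the deterministic mean, the pivotal identity is
\[e^{-|z|^2/2}\,\bar\partial^*\phi \,=\, -\mathcal{D}_1\bigl(e^{-|z|^2/2}\phi\bigr),\]
valid for any smooth $\phi$ (a one-line expansion of $\partial(e^{-|z|^2/2}\phi)$). Iterating produces $F_1^{(n)}=\frac{(-1)^n}{\sqrt{n!}}\mathcal{D}_1^n\tilde G_1$, with $\tilde G_1:=e^{-|z|^2/2}G_1\in L^\infty$ by \eqref{eq_ie}. A short computation yields $\mathcal{D}_2\tilde G_1=0$ (from $\bar\partial G_1=0$) and the canonical commutator $[\mathcal{D}_1,\mathcal{D}_2]=I$; together these imply $\mathcal{D}_2\mathcal{D}_1^n\tilde G_1=-n\,\mathcal{D}_1^{n-1}\tilde G_1$. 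Hence verifying \eqref{A1} reduces to bounding $\sup_z|\mathcal{D}_1^k\tilde G_1(z)|$ for $k\leq n+1$. Here I would exploit that the twisted translations $\mathcal{T}_\xi$ act isometrically on $L^\infty$, commute with $\mathcal{D}_1$ (cf.\ \eqref{eq_commute}), and preserve the form $e^{-|z|^2/2}\cdot(\text{entire})$; this reduces the uniform bound to a pointwise estimate of $\mathcal{D}_1^k\tilde H(0)$ for all $\tilde H=e^{-|z|^2/2}H$ with $H$ entire and $\|\tilde H\|_\infty\leq \|\tilde G_1\|_\infty$. But a short induction from the pivotal identity gives $\mathcal{D}_1^k\tilde H(0)=\partial^k H(0)$, which by Cauchy's estimate on a circle of radius $r=\sqrt{k}$ is bounded by $k!\,k^{-k/2}e^{k/2}\|\tilde H\|_\infty$. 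Smoothness of $F_1^{(n)}$ is automatic since it is real-analytic.

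The remaining conditions are routine: \eqref{A2} and \eqref{A3} follow from the explicit covariance and $L_n(0)=1$; \eqref{A5} holds because $H^{(n)}$ is $C^\infty$ (a polynomial in $|z|^2$ times a Gaussian) and separability of $F^{(n)}$ follows from its a.s.\ continuous paths; \eqref{A6} is immediate from the Gaussian factor. The only step demanding a genuine inequality is \eqref{A4}, namely $|L_n(t)|e^{-t/2}<1$ for all $t>0$, and this will be the main non-routine obstacle; I would invoke the classical Laguerre bound $|L_n(t)|\leq e^{t/2}$ (a consequence of the Perron--Szegő asymptotics of normalized Laguerre functions, or of the generating function $\sum_n L_n(t)s^n=(1-s)^{-1}\exp(-ts/(1-s))$), deducing strictness at $t>0$ from a direct inspection of the finitely many critical points of $L_n(t)e^{-t/2}$.
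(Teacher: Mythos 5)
Your proposal is correct in outline, and for the mean function it coincides with the paper's argument: the pivotal identity $\mathcal{D}_1\big(e^{-\abs{z}^2/2}\phi\big)=-e^{-\abs{z}^2/2}\bar\partial^*\phi$, the resulting reduction $F_1^{(n)}=\tfrac{(-1)^n}{\sqrt{n!}}\mathcal{D}_1^nF_1^{(0)}$, the use of twisted shifts (which turn $e^{-\abs{z}^2/2}\cdot(\text{entire})$ into $e^{-\abs{z}^2/2}\cdot(\text{Bargmann shift of the entire part})$) to reduce the uniform bound to a pointwise bound at the origin, and a Cauchy estimate are exactly the paper's steps; your commutator identity $\mathcal{D}_2\mathcal{D}_1^n\tilde G_1=-n\,\mathcal{D}_1^{n-1}\tilde G_1$ is the paper's $\bar\partial(\bar\partial^*)^kf=k(\bar\partial^*)^{k-1}f$ in disguise. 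Where you genuinely diverge is the zero-mean part: the paper simply cites \cite[Lemma 6.3]{hkr22} for the covariance formula and for \eqref{A2}--\eqref{A6}, whereas you rederive the covariance from the creation/annihilation computation $A_z^n\bar A_w^n e^{z\bar w}=n!\,L_n(|z-w|^2)e^{z\bar w}$. That computation is correct and makes the lemma self-contained, at the price of having to verify \eqref{A4} yourself.

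That verification is the one soft spot. ``Direct inspection of the finitely many critical points of $L_n(t)e^{-t/2}$'' is not a complete argument for general $n$: you would still have to show that the value of $L_n(t)e^{-t/2}$ at each critical point has modulus strictly less than $1$, which is not immediate from locating the critical points. Two clean substitutes: (i) identify $H^{(n)}(z)$ with the matrix coefficient $\langle\pi(z)h_n,h_n\rangle$ of a time-frequency shift against the $n$-th Hermite function, so that Cauchy--Schwarz gives $|H^{(n)}(z)|\le 1$ with equality only if $\pi(z)h_n$ is proportional to $h_n$, which fails for $z\neq 0$; or (ii) argue from positive-definiteness plus decay: if $|H^{(n)}(z_0)|=1$ for some $z_0\neq 0$, then $F_0^{(n)}(z_0)$ is deterministically a unimodular multiple of $F_0^{(n)}(0)$, and twisted stationarity propagates this correlation to give $|H^{(n)}(kz_0)|=1$ for all $k\in\mathbb{N}$, contradicting \eqref{A6}. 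With either fix in place, your route is a valid and more self-contained alternative to the paper's citation-based treatment of the zero-mean case.
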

\begin{proof}
When $G_1 \equiv 0$, the lemma is contained in \cite[Lemma 6.3]{hkr22}---and was proved by resorting to \cite{gh13,is16}---so we only check 
\eqref{A1}, which concerns the mean function
\begin{align*}
F_1^{(n)}(z) = \frac{e^{-\tfrac{1}{2}\abs{z}^2}}{\sqrt{n!}}\big(\bar \partial^*\big)^{n} G_1(z).
\end{align*}
The key observation is that
for an arbitrary real-smooth function $L\colon \C\to\C$,
the Wirtinger operators and the twisted derivatives are related by
\begin{align*}
\mathcal{D}_1 \big[ e^{-\tfrac{1}{2}\abs{z}^2} L(z) \big]= -e^{-\tfrac{1}{2}\abs{z}^2}\, \bar{\partial}^* L (z),
\\
\mathcal{D}_2 \big[ e^{-\tfrac{1}{2}\abs{z}^2} L (z)\big] = e^{-\tfrac{1}{2}\abs{z}^2} \,\bar{\partial} L (z).
\end{align*}
In particular,
\begin{align*}
F^{(n)}_1 = \frac{(-1)^n}{\sqrt{n!}}\big(\mathcal{D}_1\big)^n F^{(0)}_1.
\end{align*}
We also observe that each twisted shift of $F^{(0)}_1$ is a weighted analytic function:
\begin{align*}
\mathcal{T}_{\xi} F^{(0)}_1 (z)
= F^{(0)}_1 (z-\xi) e^{i \Im(z \bar{\xi})} = G_1(z-\xi) e^{-|z-\xi|^2/2 }e^{i \Im(z \bar{\xi})}
=  W_\xi G_1(z) e^{-\tfrac{1}{2}\abs{z}^2},
\end{align*}
where $W_\xi G_1(z) := G_1(z-\xi) e^{z \bar{\xi} - |\xi|^2/2}$ is the so-called Bargmann shift of $G_1$ \cite{zhu}.
Hence, by a Cauchy estimate, for each $k \geq 1$ we have 
\begin{align} \label{eq_fact1}
\big|\mathcal{D}_1^k \big[\mathcal{T}_{\xi} F^{(0)}_1 \big](0)\big| &= e^{-|0|^2/2} \big|(\bar{\partial}^*)^k \,W_\xi G_1 (0)\big|
= \big| \partial^k W_\xi G_1(0)\big| \nonumber \\
&\leq C_{k} \max_{|z|\leq 1 } | W_\xi G_1(z)| \leq C'_{k}  \max_{|z|\leq 1 } | \mathcal{T}_{\xi} F^{(0)}_1 (z)| \leq C'_{k} \|F^{(0)}_1\|_\infty.
\end{align}
Similarly 
\begin{align} \label{eq_fact2}
\big|\mathcal{D}_2 \mathcal{D}_1^k \big[\mathcal{T}_{\xi} F^{(0)}_1 \big](0)\big| &= e^{-|0|^2/2} \big|\bar \partial (\bar{\partial}^*)^k \,W_\xi G_1 (0)\big|
= k \big| \partial^{k-1} W_\xi G_1(0)\big| \leq C''_{k} \|F^{(0)}_1\|_\infty,
\end{align}
where the second equality follows from the following identity, which holds for an analytic function $f$:
\[\bar \partial (\bar \partial^*)^kf(z)=k (\bar \partial^*)^{k-1}f(z),
\]  
see, e.g., \cite[Eq. (3.4)]{haimi2019central}.
For $j=1,2$, we can now use \eqref{eq_fact1}, \eqref{eq_fact2}, the commutation property \eqref{eq_commute} and \eqref{eq_iee} to obtain
\begin{align*}
|\mathcal{D}_j F^{(n)}_1 (z)| &= \frac{1}{\sqrt{n!}}
|\mathcal{D}_j (\mathcal{D}_1)^n F^{(0)}_1 (z)|=
\frac{1}{\sqrt{n!}}|\mathcal{T}_{-z} \big[\mathcal{D}_j (\mathcal{D}_1)^n F^{(0)}_1\big] (0)|
\\
&=\frac{1}{\sqrt{n!}} \, | \mathcal{D}_j (\mathcal{D}_1)^n  \mathcal{T}_{-z} F^{(0)}_1(0)|
\leq C_n \|F^{(0)}_1\|_\infty < \infty,
\end{align*}
which shows that \eqref{A1} indeed holds.
\end{proof}

We can now prove the hyperuniformity of the zeros of a GEF with non-trivial mean.
\begin{proof}[Proof of Theorem \ref{th_5}]
By Lemma \ref{lemma_id}, we identify $G=G_0+G_1$ with a GWHF with twisted kernel
\begin{align*}
H(z)=e^{-\tfrac{1}{2}\abs{z}^2}
\end{align*}
by setting
\begin{align*}
F_j(z) = e^{-\tfrac{1}{2}\abs{z}^2} G_j(z), \qquad j=0,1.
\end{align*}
By Lemma \ref{lemma_id}, $F:=F_0+F_1$ satisfies the assumptions of Theorem \ref{th1}, so it suffices to observe that all its charges are equal to 1. This follows from the analyticity of $G$. Indeed, if $F(z)=0$ then
\begin{align*}
\jac F(z) = |\partial F(z)|^2 - |\bar\partial F(z)|^2 \geq 0 
\end{align*}
because
\begin{align*}
\bar\partial F(z) = \Big(\bar\partial G(z) -\frac{z}{2} G(z)\Big) e^{-\tfrac{1}{2}\abs{z}^2} = 0,
\end{align*}
while $\jac F(z) \not= 0$ by Lemma \ref{lem_pi}. Therefore $\charge_z = \sgn \jac F(z) =1$, as claimed.
\end{proof}

We now look into the critical points of translation-invariant GEF.
\begin{proof}[Proof of Theorems \ref{th_3} and \ref{th_4}]

Let $G=G_0$ be the zero-mean Gaussian entire function given by \eqref{eq_g0}. By Lemma \ref{lemma_id},
\[F(z)= e^{-\tfrac{1}{2}\abs{z}^2}\, \bar{\partial}^* G(z)\]
is a GWHF with twisted kernel $H(z) = (1-|z|^2)e^{-\abs{z}^2/2}$. Hence Theorem \ref{th_3} follows immediately from Theorem \ref{th2}.

Second, because multiplication by a smooth non-vanishing factor does not alter indices (see e.g., \cite[Lemma 6.5]{hkr22}) the charge of $F$ at a zero is exactly the index of $\bar{\partial}^* G$. Thus, Theorem \ref{th1} (or \cite[Theorem 1.12]{hkr22}) implies that
\begin{align}\label{eq_ia}
\var[\mathcal{N}_R^+ - \mathcal{N}_R^-] \leq C R, \qquad R>1,
\end{align}
for a constant $C>0$.
On the other hand
$\mathcal{N}_R^+ + \mathcal{N}_R^-$ is the total number of zeros of $\bar{\partial}^* G$ in $B_R(0)$ and therefore Theorem \ref{th2} gives 
\begin{align}\label{eq_ib}
k R^2 \leq 
\var[\mathcal{N}_R^+ + \mathcal{N}_R^-]\leq K R^2, \qquad R>1,
\end{align}
for adequate constants $k,K>0$.
As a consequence, we have 
\begin{align*}
\var[\mathcal{N}_R^+] \lesssim \var[\mathcal{N}_R^+ + \mathcal{N}_R^-] + \var[\mathcal{N}_R^+ - \mathcal{N}_R^-] \lesssim (K+C) R^2,
\end{align*}
while
\begin{align*}
k R^2 &\leq \var[\mathcal{N}_R^+ + \mathcal{N}_R^-] 
= \var[2\mathcal{N}_R^+ - (\mathcal{N}_R^+ - \mathcal{N}_R^-)]
\\
& \leq 8 \var[\mathcal{N}_R^+] + 2 \var[\mathcal{N}_R^+ - \mathcal{N}_R^-] \leq 8 \var[\mathcal{N}_R^+] + 2 C R.
\end{align*}
A similar argument applies to $\mathcal{N}_R^-$ and proves \eqref{eq_ic} for large $R$. To conclude the proof we observe that
the each of processes defining $\mathcal{N}_R^\pm$ is stationary and has finite first intensities --- c.f. Remark \ref{rem_shift_charge} or \cite[Proof of Lemma 3.3]{hkr22} --- so that Lemma \ref{lemma_R} below implies that $\var[\mathcal{N}_R^\pm]$ cannot vanish for any $R>0$. Hence, \eqref{eq_ic} also holds in the range $R \geq 1$.
\end{proof}

\subsection{Weighted entire functions}\label{sec_wgef}
Theorems \ref{th_3} and \ref{th_4} can also be formulated in terms of the \emph{weighted magnitude}
\begin{align}\label{eq_ampl}
A(z) = e^{-\tfrac{1}{2}\abs{z}^2} |G_0(z)|
\end{align}
associated with the translation-invariant Gaussian entire function $G_0$ given by \eqref{eq_g0}. Near points where the amplitude $A$ does not vanish, it is smooth and its gradient is related to the covariant derivative of $G_0$ by
\begin{align*}
|\nabla A(z)| = e^{-\tfrac{1}{2}\abs{z}^2} |\bar{\partial}^* G_0(z)| = |F(z)|,
\end{align*}
where $F(z) = e^{-\tfrac{1}{2}\abs{z}^2} \bar{\partial}^* G_0(z)$.
Hence each critical point $z_0$ of $A$ (in the usual real sense) that is not a zero of $A$ is a zero of $\bar{\partial}^* G_0(z)$. Conversely, a zero of $\bar{\partial}^* G_0$ is almost surely not a zero of $A$, as this would correspond to a degenerate zero of the GWHF $e^{-|z|^2/2} G_0(z)$, contradicting Lemma \ref{lem_pi}.

Thus the critical points of $A$ are either zeros of $A$ or critical points of $G_0$, as discussed Section \ref{sec_gef}. Let us further inspect a critical point $z_0$ of $A$ that is not a zero.
Near $z_0$ we can write
$G(z)=L(z)^2$ with $L$ analytic and compute
\begin{align*}
2 \partial A = A^{(1,0)} - i A^{(0,1)} = -{\frac{\overline{L}}{L}} \cdot F.
\end{align*}
The factor ${{\overline{L}}/{L}}$ is smooth (in the real sense) and non-zero near $z_0$ and therefore does not affect the corresponding charge (see e.g., \cite[Lemma 6.5]{hkr22}). Thus, the charge of $F$ at $z_0$ is
\begin{align*}
\charge_z = \sgn \Big[ \big[A^{(1,1)}\big]^2 - A^{(2,0)} A^{(0,2)} \Big],
\end{align*}
that is, 
the opposite of the sign of the determinant of the Hessian matrix of $A$ at $z_0$. As a consequence, $\charge_z=1$ if $z_0$ is a saddle point of $A$, while $\charge_z=-1$ if $A$ has a local maximum or local minimum at $z_0$. In addition, by an argument based on superharmonicity one can see that local minima of $A$ are not possible (as these correspond to zeros of $A$) \cite[Section 8.2.2]{gafbook}) \cite[Lemma 3.1]{efkr24}.

Hence,
\begin{align}\label{eq_As}
\mathcal{N}_R^+ = \# \mbox{saddle points of $A$ in }B_R(0),
\\\label{eq_Am}
\mathcal{N}_R^- = \# \mbox{local maxima of $A$ in } B_R(0).
\end{align}
The second order statistics of local extrema and saddle points of $A$ thus satisfy \eqref{eq_ia}, \eqref{eq_ib}, and \eqref{eq_ic}.

\subsection{Time-frequency analysis}\label{sec_app_tf}

Given a non-zero Schwartz function $g \in \mathcal{S}(\mathbb{R})$, the short-time Fourier transform (STFT) of a distribution $f\in\mathcal{S}'(\mathbb{R})$ is defined by \eqref{eq_stft}, where the integral is interpreted in the distributional sense (action on a test function). The \emph{modulation space} $M^\infty(\mathbb{R})$ consists of all tempered distributions with bounded STFT:
\begin{align*}
M^\infty(\mathbb{R}) = \{f \in \mathcal{S}'(\mathbb{R}): V_g f \in L^\infty(\mathbb{R}^2)\}.
\end{align*}
Historically, this space was first considered with respect to the Gaussian window function (in which case the STFT can be identified with the \emph{Bargmann transform} \cite{MR201959}). In fact, it is easy to show that different choices of (non-zero, Schwartz) window functions $g_1, g_2 \in \mathcal{S}(\mathbb{R}) \setminus \{0\}$
define the same space, and that moreover there exists a constant $C_{g_1,g_2}>0$ such that
\begin{align}\label{eq_normsminf}
\| V_{g_1} f \|_\infty \leq C_{g_1,g_2} \| V_{g_2} f \|_\infty, \qquad f \in M^\infty(\mathbb{R}),
\end{align}
see, e.g., \cite{benyimodulation} for the modern theory of modulation spaces.

The space $M^\infty(\mathbb{R})$ contains $L^2(\mathbb{R})$ and also all distributions commonly used in signal processing, such as the Dirac measure, sums of Dirac measures located along well-spread sets of points, and also their Fourier transforms \cite{benyimodulation}.

We are mainly interested in a so-called signal $f \in M^\infty(\mathbb{R})$ impacted by additive noise. Let $\mathcal{W}$ be standard complex white noise on $\mathbb{R}$, that is, $\mathcal{W} = \frac{1}{\sqrt{2}} \frac{d}{dt} \big(W_1 + i W_2\big)$, where $W_1$ and $W_2$ are independent copies of the Wiener process (Brownian motion with almost surely continuous paths), and the derivative is taken in the distributional sense. 
Then, almost every realization of $\mathcal{W}$ is a tempered distribution and we can consider $V_g \mathcal{W}$. For more details, see \cite[Section 6.1]{hkr22}, or \cite{bh} for a different approach.

The following Lemma identifies the STFT of a distribution impacted by additive complex white noise with a GWHF.

\begin{lemma}\label{lem_stft_gwhf}
Let $g \in \mathcal{S}(\mathbb{R})$ be normalized by $\|g\|_2=1$, $f \in M^\infty(\mathbb{R})$, and $\mathcal{W}$ standard complex white noise. Set
	\begin{align}\label{eq_f_stft}
	F(z) := e^{-i xy}  \cdot V_g \, (f+\mathcal{W}) \big(\bar{z}/\sqrt{\pi}\big), \qquad z=x+iy.
	\end{align}
	Then $F$ is a GWHF with twisted kernel
	\begin{align*}
	H(z) = e^{-i xy} \cdot
	V_g g \big(\bar{z}/\sqrt{\pi} \big), \qquad z=x+iy.
	\end{align*}
	In addition, conditions \eqref{A1}, \eqref{A2}, \eqref{A3}, \eqref{A4}, \eqref{A5}, and \eqref{A6} are satisfied. 
\end{lemma}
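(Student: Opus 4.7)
The plan is to decompose $F = F_0 + F_1$ with $F_0(z) := e^{-ixy} V_g \mathcal{W}(\bar z/\sqrt\pi)$ and $F_1(z) := e^{-ixy} V_g f(\bar z/\sqrt\pi)$, so that $F_0$ is a zero-mean circularly symmetric complex Gaussian field inherited from $\mathcal{W}$ and $F_1$ is deterministic. For the covariance of $F_0$ I would start from the standard identity
\[
\E\bigl[V_g \mathcal{W}(\zeta_1)\,\overline{V_g\mathcal{W}(\zeta_2)}\bigr] = V_g g(\zeta_1 - \zeta_2)\, e^{-2\pi i x_2(y_1 - y_2)}, \qquad \zeta_j=(x_j,y_j),
\]
which follows from $\E[\mathcal{W}(t)\overline{\mathcal{W}(s)}]=\delta(t-s)$ by a change of variable. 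Substituting $\zeta_j = \bar z_j/\sqrt\pi$ and combining with the prescribed phases $e^{-ix_jy_j}$, a routine collapse of exponents yields $\E[F_0(z)\overline{F_0(w)}] = H(z-w)\,e^{i\Im(z\bar w)}$ with $H$ as stated; circular symmetry $\E[F_0(z)F_0(w)]=0$ follows from the analogous property of $\mathcal{W}$.

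The easy assumptions then dispatch quickly. Condition \eqref{A3} holds since $H(0) = V_g g(0) = \|g\|_2^2 = 1$; \eqref{A2} is automatic because the matrix in question is the covariance of the Gaussian vector $(F_0(z_1),\ldots,F_0(z_n))$; \eqref{A5} follows because $V_g g\in\mathcal{S}(\mathbb{R}^2)$ makes $H$ smooth, and $F$ has continuous (indeed smooth) paths, hence is separable; and \eqref{A4} follows from the strict Cauchy--Schwarz inequality $|V_g g(\zeta)| = |\langle g_\zeta,g\rangle| \leq \|g\|_2^2 = 1$ with $g_\zeta(t) := g(t-x)e^{2\pi i t y}$, since equality forces $g_\zeta = \lambda g$ with $|\lambda|=1$, and then $|g(t-x)| = |g(t)|$ combined with Schwartz decay forces $x=0$, after which $e^{2\pi ity}g(t) = \lambda g(t)$ on the support of $g$ forces $y=0$.

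The substantive step is \eqref{A1} (and the decay part of \eqref{A6}), which I would derive from a single ``covariance formula'' for twisted derivatives: for every $\phi\in\mathcal{S}(\mathbb{R})$, every tempered distribution $h$ with well-defined STFT (in particular $h\in M^\infty(\mathbb{R})$ or $h = \mathcal{W}$), and every $j\in\{1,2\}$,
\[
\mathcal{D}_j\bigl[e^{-ixy} V_\phi h(\bar z/\sqrt\pi)\bigr] = e^{-ixy} V_{\phi_j} h(\bar z/\sqrt\pi),\qquad z=x+iy,
\]
where $\phi_j\in\mathcal{S}(\mathbb{R})$ depends only on $\phi$ and $j$ (up to $\sqrt\pi$-factors, $\phi_1(t) = t\phi(t)-(2\pi)^{-1}\phi'(t)$ and $\phi_2(t) = -t\phi(t)-(2\pi)^{-1}\phi'(t)$). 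The identity is verified by a direct chain-rule computation using $\partial_u V_\phi h = -V_{\phi'}h$, $\partial_v V_\phi h = -2\pi i V_\phi(th)$, and the elementary rewriting $V_\phi(th)(u,v) = V_{t\phi}h(u,v) + u V_\phi h(u,v)$; the crucial point is that the $x$- and $y$-prefactors arising from differentiating $e^{-ixy}$ are cancelled exactly by the correction terms $-\tfrac{\bar z}{2}F$ and $\tfrac{z}{2}F$ in the definitions of $\mathcal{D}_1,\mathcal{D}_2$. Applying the formula with $h=f$ and invoking the norm equivalence \eqref{eq_normsminf} of STFTs with different Schwartz windows yields $\|F_1\|_\infty, \|\mathcal{D}_j F_1\|_\infty<\infty$, which is \eqref{A1}; iterating the identity with $h=g$ gives $\mathcal{D}_i\overline{\mathcal{D}_j}H(z) = -e^{-ixy} V_{g_i}g_j(\bar z/\sqrt\pi)$, a Schwartz function of $z$, furnishing the rapid decay required by \eqref{A6}. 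The main obstacle is executing this cancellation cleanly once; once the covariance formula is in hand, everything reduces to Schwartz-class bookkeeping.
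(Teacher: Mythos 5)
Your proposal is correct and its substantive core coincides with the paper's proof: condition \eqref{A1} is obtained in exactly the same way, by writing $\mathcal{D}_jF_1$ as a linear combination of STFTs of $f$ with the modified Schwartz windows $t\mapsto tg(t)$ and $g'$ (the cancellation of the prefactors from $e^{-ixy}$ against the $\mp\tfrac{\bar z}{2},\tfrac{z}{2}$ corrections) and then invoking the norm equivalence \eqref{eq_normsminf}, while \eqref{A6} follows from $H\in\mathcal{S}(\mathbb{R}^2)$. The only difference is one of packaging: the paper cites \cite[Lemma 6.1]{hkr22} for the zero-mean part (the covariance identification and conditions \eqref{A2}--\eqref{A5}), whereas you prove these from scratch via the white-noise covariance of the STFT and the equality case of Cauchy--Schwarz for \eqref{A4}; your arguments there are correct, so your write-up is simply more self-contained.
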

\begin{proof}
Since $g \in \mathcal{S}(\mathbb{R})$, it is easy to see that $H \in \mathcal{S}(\mathbb{R}^2)$ --- see, e.g., \cite[Proposition 1.42]{folland89} --- which proves \eqref{A6}. In the zero mean case, the rest of the lemma is contained in \cite[Lemma 6.1]{hkr22}, so we focus on the mean function
\begin{align*}
F_1(z) = F_{1,g}(z) := e^{-i xy}  \cdot V_g \, f \big(\bar{z}/\sqrt{\pi}\big), \qquad z=x+iy,
\end{align*}
where, for convenience, we stress the dependence on the window function $g$.
The assumption $f \in M^\infty(\mathbb{R})$ means that $F \in L^\infty(\mathbb{R}^2)$, so it remains to inspect the twisted derivatives.

Taking  the distributional interpretation of the formula defining the STFT \eqref{eq_stft} into account, a direct calculation gives
\begin{align*}
\mathcal{D}_1 F_{1,g}(z)
&= \sqrt{\pi} F_{1, \mathcal{P} g}(z) -\tfrac{1}{2\sqrt{\pi}} F_{1, g'} (z),
\\
\mathcal{D}_2 F_{1,g}(z)
&= -\sqrt{\pi} F_{1, \mathcal{P} g}(z) -\tfrac{1}{2\sqrt{\pi}} F_{1, g'} (z),
\end{align*}
where $\mathcal{P}g(t)=tg(t)$. Since $g'$ and $\mathcal{P}g$ are non-zero Schwartz functions, the norm equivalence \eqref{eq_normsminf} implies that, for $j=1,2$,
\begin{align*}
\|\mathcal{D}_j F_{1,g}\|_{\infty}
\lesssim \|F_{1, \mathcal{P}g}\|_{\infty}
+ \|F_{1, g'}\|_{\infty}
= \|V_{\mathcal{P}g} f \|_{\infty}
+ \|V_{g'} f \|_{\infty} \lesssim \|V_g f\|_\infty < \infty,
\end{align*}
because $f \in M^\infty(\mathbb{R})$.
\end{proof}
We can now justify the application of our results to time-frequency analysis.
\begin{proof}[Proof of Theorem \ref{th_sp}]
Let $g(t)=2^{1/4}e^{-\pi t^2}$, $t\in\mathbb{R}$. By Lemma \ref{lem_stft_gwhf},
\begin{align*}
F(z) := e^{-i xy} \,V_g (f+\mathcal{W})\big(\bar{z}/\sqrt{\pi}\big) =
2^{1/4} \,e^{-i xy}  \,V (f+\mathcal{W}) \big(\bar{z}/\sqrt{\pi}\big), \qquad z=x+iy
\end{align*}
is a GWHF and the assumptions of Theorem \ref{th1} are satisfied. In addition, the twisted kernel is $H(z)=e^{-|z|^2/2}$ and $F$ can be further related to a Gaussian entire function $G$ by
$F(z) = e^{-|z|^2/2} G(z)$. As in the proof of Theorem \ref{th_5}, all the charges of $F$ are $1$. The zeros of $S(f+\mathcal{W})=|V(f+\mathcal{W})|^2$ are related to those of $F$ by the map $z \mapsto \bar{z}/\sqrt{\pi}$, and Theorem \ref{th1} (or Theorem \ref{th_5}) implies \eqref{eq_vs}.

Secondly, in the zero mean case $f=0$, the (Gaussian) spectrogram of $\mathcal{W}$ is related to the weighted amplitude of $G=G_0$ by
$S\,\mathcal{W}(x/\sqrt{\pi},-\xi/\sqrt{\pi}) = \tfrac{1}{\sqrt{2}}\,(A(x+i\xi))^2$,
c.f. \eqref{eq_connection} and \eqref{eq_ampl}. Thus, the 
map $z=x+i\xi \mapsto \bar{z}/\sqrt{\pi}$ relates the set of local maxima of $S\,\mathcal{W}$ to that of $A$, denoted $\mathcal{N}_R^-$ in \eqref{eq_Am}, which in turn satisfies \eqref{eq_ic}. This gives \eqref{eq_vs_2}.
\end{proof}

\begin{proof}[Proof of Theorem \ref{th_sp_2}]
Let $g := \|h_1\|_2^{-1} h_1$. By Lemma \ref{lem_stft_gwhf},
\begin{align*}
F(z) := e^{-i xy}  \cdot V_{g} \, \mathcal{W} \big(\bar{z}/\sqrt{\pi}\big), \qquad z=x+iy,
\end{align*}
defines a zero-mean GWHF function with twisted kernel \[H(z)=e^{-ixy} V_{g}g(\bar{z}/\sqrt{\pi})= (1-|z|^2)e^{-\tfrac{1}{2}\abs{z}^2}, \qquad z=x+iy,\] as can be verified with a direct calculation (the last equality is a special case of the so-called \emph{Hermite-Laguerre connection} \cite[Theorem (1.104)]{folland89}; see also \cite[Section 6.4]{hkr22}). By Lemma \ref{lem_stft_gwhf}, the hypothesis of Theorem \ref{th2} is satisfied, and the desired conclusion follows.
\end{proof}

\begin{proof}[Proof of Theorem \ref{th_sp_3}]
Without loss of generality we may assume that $\|g\|_2=1$, and invoke Lemma \ref{lem_stft_gwhf}. 
If $z=a+ib$ is a zero of the GWHF \eqref{eq_f_stft}, then
\begin{align*}
\partial_x F(z)&= \frac{e^{-i ab}}{\sqrt{\pi}} \partial_x \big(V_g \, (f+\mathcal{W})\big) (a/\sqrt{\pi},-b/\sqrt{\pi}),
\\
\partial_y F(z)&= -\frac{e^{-i ab}}{\sqrt{\pi}} \partial_y\big(V_g \, (f+\mathcal{W})\big)(a/\sqrt{\pi},-b/\sqrt{\pi}),
\end{align*}
and, consequently, the corresponding charge is
\begin{align*}
-\sgn \Im [\partial_x F(z) \cdot \overline{\partial_y F(z)}] =
\newcharge_{\bar{z}/\sqrt{\pi}} (f+\mathcal{W}).
\end{align*}
Thus the desired conclusion follows from Theorem \ref{th1}.
\end{proof}

\section{Auxiliary results and postponed proofs}\label{sec_aux}
\subsection{Stationary number statistics cannot be deterministic}
We prove the following lemma, for which we could not find a citable reference.
\begin{lemma}\label{lemma_R}
Let $\mathcal{Z}$ be a stationary point process on $\mathbb{C}$ with positive and finite first intensity. Then
\[
\var \#\big(\mathcal{Z} \cap \overline{B}(0,R) \big) > 0, \qquad \mbox{for all } R>0.
\]
\end{lemma}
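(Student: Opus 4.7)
The plan is to argue by contradiction. Fix $R > 0$ and set $N_R := \#(\mathcal{Z} \cap \overline{B}(0,R))$. Suppose $\var N_R = 0$. Since $N_R$ is integer-valued and $\E[N_R] = \lambda \pi R^2 > 0$ (where $\lambda > 0$ is the first intensity), this forces $N_R = c$ almost surely for some positive integer $c$.

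I would then propagate this rigidity to every translate. Let $M(w) := \#(\mathcal{Z} \cap \overline{B}(w,R))$. By stationarity, for each fixed $w$ the variable $M(w)$ has the same distribution as $N_R$, so $M(w) = c$ almost surely. Fubini applied to $\mathbf{1}_{\{M(w) \neq c\}}$ then gives that, almost surely, $M(w) = c$ for Lebesgue-almost every $w \in \mathbb{C}$. Since the first intensity is finite, $\mathcal{Z}$ is almost surely locally finite, so $\Sigma := \bigcup_{z \in \mathcal{Z}} \partial\overline{B}(z,R)$ is a locally finite union of circles, hence Lebesgue-null, and $M$ is constant on each connected component of $\mathbb{C} \setminus \Sigma$. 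Because every component has positive Lebesgue measure, the almost-everywhere identity upgrades to $M \equiv c$ on every component.

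The contradiction then comes from local analysis around a single point of $\mathcal{Z}$. Pick any $z_0 \in \mathcal{Z}$ (available almost surely since $c \geq 1$). The circle $\partial\overline{B}(z_0,R)$ meets each other circle $\partial\overline{B}(z,R)$ with $z \in \mathcal{Z} \setminus \{z_0\}$ in at most two points, so almost every $p \in \partial\overline{B}(z_0,R)$ lies on no other circle in the locally finite family. Fix such a $p$ and a small open ball $U$ around it whose intersection with $\Sigma$ is only an arc of $\partial\overline{B}(z_0,R)$; then $U \setminus \Sigma$ decomposes into two components $U_+ \subset \overline{B}(z_0,R)$ and $U_- \subset \mathbb{C} \setminus \overline{B}(z_0,R)$. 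For $w \in U_+$ the point $z_0$ is counted by $M(w)$, while for $w \in U_-$ it is not; every other point of $\mathcal{Z}$ contributes equally on the two sides because its circle does not meet $U$. Hence $M|_{U_+} = M|_{U_-} + 1$, contradicting $M \equiv c$ on both components.

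The main obstacle is making this last step clean: verifying that $U$ can be chosen small enough to isolate a single $\pm 1$ jump of $M$ across the arc of $\partial\overline{B}(z_0,R)$. This reduces to local finiteness of $\mathcal{Z}$ together with the elementary fact that two distinct circles of radius $R$ share at most two points, which allows one to avoid triple incidences and tangencies at $p$ almost surely.
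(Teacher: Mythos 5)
Your proposal is correct and follows essentially the same route as the paper's proof: both argue by contradiction, propagate the zero-variance rigidity to all translated disks (you via Fubini and local constancy off the union of circles, the paper via a countable dense set of centers), and then derive the contradiction from the unit jump of the count across $\partial\overline{B}(z_0,R)$ at a point avoiding all other circles of the locally finite family.
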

\begin{proof}
Suppose on the contrary that there exists $R>0$ with
$\var \big(\mathcal{Z} \cap \overline{B}(0,R) \big)=0$. Let $N := \mathbb{E} \big[\#\big(\mathcal{Z} \cap \overline{B}(0,R) \big)\big]$. Thus, for every $\zeta \in \mathbb{C}$, the event $\#\big[\mathcal{Z} \cap \overline{B}(\zeta,R) \big]= N$ has probability one. In particular, $N \in \mathbb{N} \cup \{0,\infty\}$, while by the assumption of the first intensity, $0<N<\infty$. Collecting exceptional events for all 
$\zeta \in \mathbb{Q} + i \mathbb{Q}$ we find an event $\Omega_0$ such that $\mathbb{P}(\Omega_0)=1$ and 
$\#\big[\mathcal{Z} \cap \overline{B}(\zeta,R) \big]= N$ for all $\zeta \in \mathbb{Q} + i \mathbb{Q}$. 

Fix a realization $\Lambda$ of $\mathcal{Z}$ within the event $\Omega_0$, so that
\begin{align}\label{eq_kla}
\#\big[\Lambda \cap \overline{B}(\zeta,R) \big]= N, \qquad \zeta \in \mathbb{Q} + i \mathbb{Q}.
\end{align}
Since $N \geq 1$, there exists at least one point $\lambda \in \Lambda$. Consider the set of exceptional points
\begin{align*}
\Gamma = \bigcup_{\lambda'\in\Lambda \setminus \{\lambda\}}
\left(\partial B(\lambda,R) \cap \partial B(\lambda',R)\right).
\end{align*}
By \eqref{eq_kla}, $\Lambda$ has finitely many points on any given compact set. Since 
$\partial B(\lambda,R) \cap \partial B(\lambda',R)$ has at most two points, it follows that $\Gamma$ is finite.  We can therefore choose a point $z \in \partial B (\lambda,R) \setminus \Gamma$. This choice of $z$ guarantees that 
\begin{align}\label{eq_kla2}
\partial B(z,R) \cap \Lambda = \{\lambda\}.
\end{align}

Since $\Lambda$ is locally finite, the set $\overline{B}(\zeta,R) \cap \big( \Lambda \setminus \{\lambda\} \big)$ remains unaltered for all centers $\zeta$ that are sufficiently close to $z$. Thus \eqref{eq_kla2} allows us to choose $\zeta_1, \zeta_2 \in \mathbb{Q} + i \mathbb{Q}$, two small perturbations of $z$, such that
\begin{align*}
\# \big[ \Lambda \cap \overline{B}(\zeta_1, R) \big] < \big[ \Lambda \cap \overline{B}(\zeta_2, R) \big],
\end{align*}
by simply including or excluding $\lambda$ from the observation disk. This contradicts \eqref{eq_kla}, and completes the proof.
\end{proof}
\subsection{Convolution estimates}\label{sec_ce}
\begin{lemma}\label{lemma_b}
There exists a constant $C>0$ such that
	\begin{align}\label{eq_ll1}
		\sup_{z \in \mathbb{C}} \int_{w:|w| = R} (1+|z-w|)^{-2} \, |dw| \leq C.
	\end{align}
\end{lemma}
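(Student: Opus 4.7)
The plan is to reduce to a one-parameter integral over the circle by rotational symmetry, then use the standard lower bound $1-\cos\theta\ge 2\theta^2/\pi^2$ on $[-\pi,\pi]$ to majorize the integrand by an explicitly integrable function, and finally conclude with a short case analysis in the parameters $|z|$ and $R$.

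First, I would observe that both the domain $\{|w|=R\}$ and the measure $|dw|$ are invariant under rotations $w\mapsto e^{i\alpha}w$, so the integral depends on $z$ only through $a:=|z|$; hence we may assume $z=a\ge 0$. Parametrizing $w=Re^{i\theta}$ with $\theta\in(-\pi,\pi]$, the identity
\begin{equation*}
|z-w|^2 = (a-R)^2 + 2aR(1-\cos\theta)
\end{equation*}
together with $1-\cos\theta\ge 2\theta^2/\pi^2$ yields $|z-w|\ge |a-R|$ and $|z-w|\ge (2/\pi)\sqrt{aR}\,|\theta|$. Averaging these two bounds and adding $1$ gives
\begin{equation*}
2(1+|z-w|)\ge 1+A_0+B_0|\theta|, \qquad A_0:=|a-R|,\ B_0:=(2/\pi)\sqrt{aR}.
\end{equation*}

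Writing $A:=1+A_0$, the integral is then bounded by
\begin{equation*}
\int_{|w|=R}(1+|z-w|)^{-2}|dw|\le 4R\int_{-\pi}^{\pi}\bigl(A+B_0|\theta|\bigr)^{-2}d\theta
= \frac{8\pi R}{A\,(A+\pi B_0)} = \frac{8\pi R}{(1+|a-R|)\bigl(1+|a-R|+2\sqrt{aR}\bigr)},
\end{equation*}
by a direct antiderivative computation.

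The final step is a short case analysis to show the last ratio is bounded by a universal constant. If $R\le 1$ the full circle has length $\le 2\pi$ and the integrand is $\le 1$, so the integral is $\le 2\pi$. If $R>1$ and either $a\le R/2$ or $a\ge 2R$, then $|a-R|\ge R/2$, so $A\ge 1+R/2$ and the ratio is $\lesssim R/(1+R/2)^2 \lesssim 1$. If $R>1$ and $R/2\le a\le 2R$, then $aR\ge R^2/2$, so $\pi B_0\ge R\sqrt{2}$, and the denominator is $\gtrsim R$, again giving ratio $\lesssim 1$. Since no single step is technically delicate, there is no real obstacle here; the only point requiring minor care is ensuring the case split covers all $(a,R)\in[0,\infty)^2$ with the constants made explicit.
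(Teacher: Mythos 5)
Your proof is correct, and it follows essentially the same route as the paper: reduce by rotation to $z=a\ge 0$, parametrize the circle by the angle, and bound the chord $|z-w|$ from below by a linear function of $|\theta|$. The only difference is cosmetic — you use the global bound $1-\cos\theta\ge 2\theta^2/\pi^2$ and integrate the majorant exactly, whereas the paper splits the circle into a far arc (where the integrand is uniformly small) and a near arc treated by the same chord–angle comparison.
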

\begin{proof}
By applying a rotation, we assume that $z=a \in \mathbb{R}$. We also assume that $R\geq 4$. Note first that
\begin{align}\label{eq_ll2}
\int_{\substack{w:|w| = R,\\|w-a| > \sqrt{R}}} (1+|w-a|)^{-2} \, |dw|
\leq 2\pi (1+\sqrt{R})^{-2} R \lesssim 1.
\end{align}
On the other hand, writing $w=Re^{i\theta}$ with $\theta \in (-\pi,\pi]$, we have $|w-a| \geq R |\sin(\theta)|$. Hence, if $|w-a| \leq \sqrt{R}$, then $|\sin(\theta)| \leq R^{-1/2} < 1/2$ and thus $|\theta| \leq c |\sin(\theta)|$ for an absolute constant $c>0$. Therefore $|w-a| \geq \tfrac{R}{c} |\theta|$, while $|\theta| \leq c R^{-1/2}$. With this information, we estimate
\begin{align}\label{eq_ll0}
	&\int_{\substack{w:|w| = R,\\|w-a| \leq \sqrt{R}}} (1+|w-a|)^{-2} \, |dw|
	\leq R \int_{-cR^{-1/2}}^{cR^{-1/2}} \big(1+|\tfrac{R}{c}\theta|\big)^{-2}\, d\theta
	\lesssim \int_{-\sqrt{R}}^{\sqrt{R}} (1+|t|)^{-2}\,dt  \lesssim 1.
\end{align}
Combining \eqref{eq_ll2} and \eqref{eq_ll0} we obtain \eqref{eq_ll1}.
\end{proof}

\begin{lemma}\label{lemma_L}
Let $0 \leq \delta < 1$. Then there exists a constant $C_\delta$ such that for all $R,L>0$:
\begin{align*}
	\sup_{z:|z|=R} 	\int_{w:|w|=R, |z-w| \leq L} |z-w|^{-\delta}\, |dw| \leq C_\delta L^{1-\delta}.
\end{align*}
\end{lemma}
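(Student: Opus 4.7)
The plan is to use rotational symmetry to reduce to a canonical point on the circle, parameterize the arc by angle, and bound $|z-w|$ from below via the sine of the half-angle, splitting into two regimes depending on whether $L$ is large or small compared to $R$.

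First I would invoke rotational invariance of the integrand to fix $z=R$ on the positive real axis. Writing $w=Re^{i\theta}$ with $\theta\in(-\pi,\pi]$ gives $|dw|=R\,d\theta$ and the key identity
$$|z-w| \;=\; 2R\bigl|\sin(\theta/2)\bigr|,$$
so the restriction $|z-w|\leq L$ becomes $|\sin(\theta/2)| \leq L/(2R)$.

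If $L\geq 2R$, the whole circle qualifies; since $\delta<1$, the function $|\sin(\theta/2)|^{-\delta}$ is integrable on $(-\pi,\pi)$, so the integral is bounded by a universal constant times $R^{1-\delta}$, which is in turn dominated by $L^{1-\delta}$ because $R\leq L/2$. If $L<2R$, the elementary inequality $|\sin(\theta/2)|\geq |\theta|/\pi$ (valid for $|\theta|\leq\pi$) confines $|\theta|$ to a range of size $\lesssim L/R$ and yields $|z-w|\gtrsim R|\theta|$ on that range; hence
$$\int_{|w|=R,\;|z-w|\leq L} |z-w|^{-\delta}\,|dw| \;\lesssim\; \int_{|\theta|\lesssim L/R}(R|\theta|)^{-\delta}\,R\,d\theta \;\asymp\; R^{1-\delta}(L/R)^{1-\delta} \;=\; L^{1-\delta}.$$

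The two bounds combine to give the claim, with $C_\delta$ depending on $\delta$ only through the factor $1/(1-\delta)$ produced by the $\theta$-integration. No step presents a real obstacle: once the trigonometric identity for $|z-w|$ and the linear lower bound on $|\sin(\theta/2)|$ are in place, the estimate is essentially bookkeeping, and the matching of the two regimes at $L\asymp R$ is automatic.
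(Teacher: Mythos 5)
Your proof is correct and follows essentially the same route as the paper's: reduce by rotation to $z=R$, parameterize the circle by angle, and use the comparability of chord length and arc length ($|z-w|=2R|\sin(\theta/2)|\gtrsim R|\theta|$) to convert the integral into $\int|\theta|^{-\delta}\,d\theta$ over an interval of length $\asymp L/R$, with a trivial separate treatment of the regime where $L$ is comparable to or larger than $R$. The paper additionally rescales to $R=1$ first, but this is only a cosmetic difference.
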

\begin{proof}
By applying a rotation, we assume that $z=R$, and by rescaling, we assume that $R=1$. Assume first that $L \leq 1/2$, write $w=e^{i\theta}$ with $\theta \in (-\pi,\pi]$, and note that if $|w-1| \leq L$, then $|\sin(\theta)| \leq L < 1/2$, so $|\theta| \leq c |\sin(\theta)|$ for an absolute constant $c>0$. As a consequence,
$|w-1| \geq |\sin(\theta)| \geq |\theta|/c$, and
\begin{align*}
	\int_{\substack{w:|w|=1,\\|w-1| \leq L}} |w-1|^{-\delta}\, |dw| 
	\leq \int_{-{cL}}^{{cL}} \big(\tfrac{|\theta|}{c}\big)^{-\delta}\,d\theta \leq C_\delta L^{1-\delta}.
\end{align*}
Finally, if $L \geq 1/2$, the previous case gives
\begin{align*}
	\int_{\substack{w:|w|=1,\\|w-1| \leq 1/2}} |w-1|^{-\delta}\, |dw|\leq C_\delta \leq C'_\delta L^{1-\delta},
\end{align*}
while
\begin{align*}
	\int_{\substack{w:|w|=1,\\ |w-1| \geq 1/2}} |w-1|^{-\delta}\, |dw|
	\lesssim C_\delta \lesssim C'_\delta L^{1-\delta}.
\end{align*}
\end{proof}

\begin{lemma}\label{lemma_conv}
Let $0<p<2$. Then there exists a constant $C_p>0$ such that
\begin{align}\label{eq_l0}
\int_{\mathbb{C}} |z+w|^{-p} e^{-|z|^2} \, dA(z) \leq C_p (1+|w|)^{-p}, \qquad w \in \mathbb{C}.
\end{align}
\end{lemma}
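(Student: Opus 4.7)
The plan is to split the argument based on the size of $|w|$ and then, for large $|w|$, split the integration domain $\mathbb{C}$ according to whether $|z|$ is below or above $|w|/2$.

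First I would dispose of the case $|w|\le 1$ by noting that the target $(1+|w|)^{-p}$ is bounded below by $2^{-p}$ there, so it suffices to bound the integral by an absolute constant. This follows at once from the Hardy--Littlewood rearrangement inequality: the symmetric decreasing rearrangement of $z\mapsto |z+w|^{-p}$ is $|z|^{-p}$, while $e^{-|z|^2}$ is already radially decreasing, hence
\[
\int_{\mathbb{C}} |z+w|^{-p} e^{-|z|^2}\, dA(z) \le \int_{\mathbb{C}} |z|^{-p} e^{-|z|^2}\, dA(z),
\]
and this last integral is finite precisely because $p<2$ (the singularity at the origin is integrable in two real dimensions, and the Gaussian handles infinity).

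For $|w|\ge 1$, I would split $\mathbb{C}=\{|z|\le |w|/2\}\cup\{|z|>|w|/2\}$. On the first piece, the reverse triangle inequality gives $|z+w|\ge |w|/2$, so $|z+w|^{-p}\le 2^{p}|w|^{-p}$ and integrating the Gaussian yields a contribution $\le 2^{p}\pi\, |w|^{-p}$. On the second piece, the key point is that $|z|>|w|/2$ forces $e^{-|z|^2/2}\le e^{-|w|^2/8}$; factoring out one half of the Gaussian and applying Hardy--Littlewood to the remaining factor gives
\[
\int_{|z|>|w|/2} |z+w|^{-p} e^{-|z|^2}\, dA(z) \le e^{-|w|^2/8}\int_{\mathbb{C}} |z|^{-p} e^{-|z|^2/2}\, dA(z) \lesssim e^{-|w|^2/8},
\]
which is dominated by $|w|^{-p}$ for $|w|\ge 1$. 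Combining the two pieces and observing that $|w|^{-p}\le 2^{p}(1+|w|)^{-p}$ in this range yields the claim.

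There is no real obstacle; the only judgment call is the choice of the split threshold $|w|/2$, which is made to ensure that both pieces are simultaneously controlled, and the only place where the hypothesis $p<2$ is used is to guarantee the finiteness of $\int_{\mathbb{C}}|z|^{-p}e^{-|z|^2}\,dA(z)$.
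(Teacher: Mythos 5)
Your proof is correct and follows essentially the same strategy as the paper's: the $|w|\le 1$ case via the Hardy--Littlewood rearrangement inequality (using $p<2$ for integrability at the origin), and for large $|w|$ a two-region split in which one piece gives the factor $|w|^{-p}$ from a lower bound on $|z+w|$ and the other gives Gaussian decay. The only cosmetic difference is that the paper splits according to $|z+w|\lessgtr |w|/2$ while you split according to $|z|\lessgtr |w|/2$; both work identically.
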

\begin{proof}
First note that, by the Hardy-Littlewood rearrangement inequality,
\begin{align}\label{eq_l1}
\int_{\mathbb{C}} |z+w|^{-p} e^{-|z|^2} \, dA(z) \leq
\int_{\mathbb{C}} |z|^{-p} e^{-|z|^2} \, dA(z) < \infty,
\end{align}
while
\begin{align}\label{eq_l2}
\int_{|z+w|\geq|w|/2} |z+w|^{-p} e^{-|z|^2} \, dA(z) \leq 2^p
|w|^{-p} \int_{\mathbb{C}} e^{-|z|^2} \, dA(z) = C_p |w|^{-p}.
\end{align}
Second, if $|z+w| \leq |w|/2$, then $|w| \leq 2|z|$ and
\begin{align}\label{eq_l3}
	\int_{|z+w| \leq |w|/2} |z+w|^{-p} e^{-|z|^2} \, dA(z) \leq
	e^{-|w|^2/4} \int_{|u|\leq |w|/2} |u|^{-p} \, dA(u)
	\leq C'_p e^{-|w|^2/4} |w|^{2-p}.
\end{align}
Hence, \eqref{eq_l1} gives \eqref{eq_l0} for $|w| \leq 1$, while \eqref{eq_l2} and \eqref{eq_l3} cover the case $|w| >1$.
\end{proof}

\subsection{The planar chaos decomposition is independent of the choice of basis}\label{sec_chin}
Recall the notation of Section \ref{sec_plch}. As we now argue, the chaos decomposition \eqref{eq_ch} is independent of the choice of the orthonormal basis of $G$. While this is implicit in \cite{janson1997gaussian}, we offer the following short argument.

We further split chaotic subspaces as follows. Given a polynomial $p$ in the variables $\eta_1, \overline{\eta_1}$, $\ldots\,$, $\eta_k, \overline{\eta_k}$, where $\eta_1, \ldots, \eta_k \in G$,  we say that $p$ is of degree $\leq (M,N)$ if it is of total degree $\leq M$ in variables $\eta_1,\ldots, \eta_k$ and of total degree $\leq N$ in variables $\overline{\eta_1},\ldots, \overline{\eta_k}.$ We consider the polynomial spaces
\begin{align*}
\mathrm{Pol}_{M,N}(G)= \text{span} \{ p(\eta_{1}, \overline{\eta_1}, \ldots, \eta_{k}, \overline{\eta_k}): p \text{ polynomial of degree} \leq (M,N), \eta_1, \ldots, \eta_k \in G \}.
\end{align*}
These spaces split the chaotic spaces as follows.
\begin{prop}\label{prop_pol}
For a (circularly symmetric) Gaussian random function $F$ with probability space $(\Omega,\mathbb{P})$, the following decomposition holds:
\begin{align*}
C_{M,N}= \overline{\mathrm{Pol}_{M,N}(G)} \ominus \overline{\mathrm{Pol}_{M-1,N}(G)} \ominus \overline{\mathrm{Pol}_{M,N-1}(G)}, \qquad M,N \geq 0,
\end{align*}
where $\mathrm{Pol}_{M,N}(G)=0$ if $N<0$ or $M<0$.

As a consequence, the spaces $C_{M,N}$ are independent of the choice of orthonormal basis of the Gaussian space $G$.
\end{prop}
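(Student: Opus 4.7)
My plan is to establish the identity
\[
\overline{\mathrm{Pol}_{N,M}(G)} = \bigoplus_{p \leq N,\, q \leq M} C_{p,q}
\]
(reading the first Hermite index as the unconjugated degree and the second as the conjugated degree, to match the polynomial-degree convention of $\mathrm{Pol}_{N,M}$). Combined with mutual orthogonality of the $C_{p,q}$, this immediately yields the alternating-complement description of $C_{M,N}$: subtracting off $\overline{\mathrm{Pol}_{N-1,M}(G)}$ and $\overline{\mathrm{Pol}_{N,M-1}(G)}$ removes every $C_{p,q}$ with $p\leq N-1$ or $q\leq M-1$, leaving only the single summand indexed by the maximal pair. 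The basis-independence assertion then follows automatically, since $\mathrm{Pol}_{N,M}(G)$ is defined intrinsically from $G$ alone with no reference to any distinguished basis.

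For the inclusion $\supseteq$ in the key identity, each generator $\prod_k H_{\alpha_k,\beta_k}(\xi_k,\overline{\xi_k})$ of $C_{p,q}$ is, by the explicit formula \eqref{eq_hermite}, a polynomial in the basis vectors $\xi_k \in G$ of unconjugated total degree $p$ and conjugated total degree $q$, and hence belongs to $\mathrm{Pol}_{N,M}(G)$ whenever $p\leq N$ and $q\leq M$. For the reverse inclusion, a generator $\prod_j \eta_j^{a_j}\overline{\eta_j}^{b_j}$ of $\mathrm{Pol}_{N,M}(G)$, with $\eta_j \in G$, $|a|\leq N$ and $|b|\leq M$, is expanded by writing each $\eta_j$ as a finite linear combination of basis vectors $\xi_k$, producing a sum of pure basis monomials $\prod_k \xi_k^{a'_k}\overline{\xi_k}^{b'_k}$ of comparable multidegree. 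Each such monomial is then rewritten using the Hermite inversion
\[
z^p \bar z^q = \sum_{r=0}^{\min(p,q)} r!\,\binom{p}{r}\binom{q}{r}\, H_{p-r, q-r}(z,\bar z),
\]
which is obtained by multiplying the generating-function identity for the $H_{j,k}$ by $e^{uv}$ and matching coefficients. Applied coordinate-wise, this writes the original generator as a finite linear combination of Hermite products with cumulative indices $\leq (|a'|,|b'|) \leq (N,M)$, and so places it in the algebraic sum $\sum_{p\leq N,\, q\leq M} C_{p,q}$.

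Because \eqref{eq_basis} is an orthonormal basis of $L^2(\Omega,\mathcal{F}_S)$, the subspaces $C_{p,q}$ are closed and pairwise orthogonal, so the algebraic sum above is already a closed orthogonal direct sum; the two inclusions therefore close the proof of the key identity, and the description of $C_{M,N}$ follows by the inspection described earlier (with the cosmetic index swap forced by the two different conventions). The basis-independence statement is then automatic: the left-hand side $\overline{\mathrm{Pol}_{N,M}(G)}$ is defined using only $G$ and the ambient Hilbert structure of $L^2(\Omega,\mathcal{F}_S)$, so the same is true of its orthogonal differences and hence of the chaotic subspaces $C_{M,N}$, despite the a priori basis-dependent appearance of definition \eqref{eq_ch2}. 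The one subtlety worth flagging is the purely notational bookkeeping between the Hermite-degree indexing of $C_{M,N}$ and the polynomial-degree indexing of $\mathrm{Pol}_{N,M}$; once these conventions are aligned, the argument is just the Hermite inversion above plus the orthogonality of \eqref{eq_basis}.
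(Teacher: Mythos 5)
Your proof is correct and follows essentially the same route as the paper's: both reduce to the mutual orthogonality of products of complex Hermite polynomials together with the observation that $\overline{\mathrm{Pol}_{N,M}(G)}$ is spanned by the Hermite products of cumulative bidegree at most $(N,M)$, and your explicit inversion formula $z^p\bar z^q=\sum_r r!\binom{p}{r}\binom{q}{r}H_{p-r,q-r}(z,\bar z)$ just makes that last step slightly more concrete than the paper does. The only point to tighten is that a general $\eta_j\in G$ is an $L^2$-limit, not a finite linear combination, of the $\xi_k$, so one must first pass to the dense span of the basis and invoke $L^2$-continuity of fixed-degree polynomials in jointly Gaussian variables --- a reduction the paper likewise asserts (as $\overline{\mathrm{Pol}_{N,M}(G)}=\overline{\mathrm{Pol}_{N,M}(V)}$) without proof.
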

\begin{proof}
Let us consider, more generally, polynomial spaces associated with a (not necessarily closed) linear subspace of $G$. If $V$ is a linear subspace of $G$, we let
\begin{align*}
\mathrm{Pol}_{M,N}(V)= \text{span} \{ p(\eta_{1}, \overline{\eta_1}, \ldots, \eta_{k}, \overline{\eta_k}): p \text{ polynomial of degree } \leq (M,N), \eta_1, \ldots, \eta_k \in V \}.
\end{align*}
Let $V$ be the linear span (i.e., finite linear combinations) of the variables $\{\xi_j: j \geq 0\}$. Then
\begin{align*}
\overline{\mathrm{Pol}_{M,N}(G)}
= \overline{\mathrm{Pol}_{M,N}(V)},
\end{align*}
while
\begin{align*}
\mathrm{Pol}_{M,N}(V)= \mathrm{span} \bigg\{
\prod_{k=0}^\infty H_{\alpha_k, \beta_k}( \xi_k, \bar \xi_k):
|\alpha| \leq M, |\beta| \leq N \bigg\}.
\end{align*}
It follows from orthogonality of complex Hermite polynomials that the random variables
\begin{align*}
\bigg \{
\prod_{k=0}^\infty H_{\alpha_k, \beta_k}( \xi_k, \bar \xi_k):
|\alpha| =M, |\beta| = N \bigg\}
\end{align*}
are orthogonal to the spaces $\mathrm{Pol}_{M-1,N}(V)$ and $\mathrm{Pol}_{M,N-1}(V)$, and, therefore, also to 
$\mathrm{Pol}_{M-1,N}(G)$ and $\mathrm{Pol}_{M,N-1}(G)$. Consequently, 
\begin{align*}
 \overline{\mathrm{span}} \bigg\{
\prod_{k=0}^\infty H_{\alpha_k, \beta_k}( \xi_k, \bar \xi_k):
|\alpha| =M, |\beta| = N \bigg\}
= \overline{\mathrm{Pol}_{M,N}(G) }\ominus \overline{\mathrm{Pol}_{M-1,N}(G)}\ominus \overline{\mathrm{Pol}_{M,N-1}(G)},
\end{align*}
which proves the claim.
\end{proof}

\subsection{Proof of Lemma \ref{lem_lip}}\label{sec_plem_lip}
Let us denote $\Psi(z) := \big(F(z),\mathcal{D}_1 F(z), \mathcal{D}_2 F(z)\big)$. By 
Lemma \ref{lem_covariance} and \eqref{A6},
the vector $\Psi(z)$ has a suitably smooth covariance kernel and
\begin{align*}
\sup_{z\in \mathbb{C}} \|\cov[\Psi(z)]\| & \lesssim 1,
\\
\sup_{1 \leq k \leq L_n} \sup_{z \in Q_k} \mathbb{E} \big[| \Psi(z)- \Psi(z_k)|^2\big] & \lesssim \frac{1}{n}.
\end{align*}
By normality, this implies that for every $p \in [1,\infty)$,
\begin{align*}
C_p := \sup_{z\in \mathbb{C}} (\mathbb{E}[1+|\Psi(z)|^p])^{1/p} < \infty,
\end{align*}
while
\begin{align*}
\delta_p(n) :=
\sup_{1 \leq k \leq L_n} \sup_{z \in Q_k} \left(\mathbb{E} \left| \Psi(z)- \Psi(z_k)\right|^p \right)^{1/p} \to 0, \mbox{ as } n \to \infty.
\end{align*}

Let $p,p' \in (1,\infty)$ be conjugate H\"older exponents and let us estimate:
\begin{align*}
&\left(\mathbb{E} \bigg[\bigg| \int_B \phi(\Psi(z)) \, dA(z) - \sum_{k=1}^{L_n} \phi(\Psi(z_k))
\,\big|B \cap Q_k \big| \bigg|^2\bigg]\right)^{1/2}
\\
&\qquad\leq
\left(\mathbb{E}\bigg[\bigg(
\sum_{k=1}^{L_n} \int_{B \cap Q_k} \left|
\phi(\Psi(z))- \phi(\Psi(z_k))\right|\,dA(z)\bigg)^2\bigg]\right)^{1/2}
\\
&\qquad\leq C\left(
\mathbb{E}\bigg[\bigg(\sum_{k=1}^{L_n} \int_{B \cap Q_k} \left|\Psi(z)- \Psi(z_k)\right|
(1+|\Psi(z)|^s+|\Psi(z_k)|^s)
\,dA(z)\bigg)^2\bigg]\right)^{1/2}
\\
&\qquad\leq
{C} \sum_{k=1}^{L_n} \int_{B \cap Q_k} \left(\mathbb{E} \big[\left|\Psi(z)- \Psi(z_k)\right|^2
(1+|\Psi(z)|^s+|\Psi(z_k)|^s)^2\big]\right)^{1/2} \,dA(z)
\\
&\qquad\leq
{C} \sum_{k=1}^{L_n} \int_{B \cap Q_k} \big(\mathbb{E} \big[\left|\Psi(z)- \Psi(z_k)\right|^{2p}\big]\big)^{1/{2p}}
\big(\mathbb{E} \big[(1+|\Psi(z)|^s+|\Psi(z_k)|^s)^{2p'}\big]\big)^{1/{2p'}}\,dA(z)
\\
&\qquad\leq C \cdot |B| \cdot C_{2sp'}^{s} \cdot
\delta_{2p}(n) \longrightarrow 0,
\end{align*}
as $n \to \infty$.

\appendix

\section{Computations}\label{appendix}
We use the notation of Section \ref{sec_nonhyper} and present the computations needed to obtain an explicit expression for $\E [\phi(z) \phi(w)]$.
We recall that the chaos projection of $N(B)$ to $C_{2,2}$ is  
$\frac{1}{\pi}\int_B \phi(z) dA(z)$, where 
\begin{align*}
\phi(z)&= c_{1,0} L_1(|\xi(z)|^2)L_1(|\xi'(z)|^2) + c_{0,1}L_1(|\xi(z)|^2)L_1(|\xi''(z)|^2) + c_{1,1}L_1(|\xi'(z)|^2)L_1(|\xi''(z)|^2) \\
&+ c_{0,0} L_2(|\xi(z)|^2) + c_{2,0} L_2(|\xi'(z)|^2) + c_{0,2} L_2(|\xi''(z)|^2)
\end{align*}
and the constants $c_{k,l}$ are as in \eqref{eq_cs}.
Recall also that we are focusing on the case
$$H(z)=(1-|z|^2)e^{-\tfrac{1}{2}\abs{z}^2}.
$$
We will need the following derivatives of $H$:
\begin{align*}
\mathcal{D}_1H(z)&=(\partial-\bar z /2)\big[(1-|z|^2)e^{-|z|^2/2} \big] = 
-\bar z (2-|z|^2)e^{-|z|^2/2}, \\
\mathcal{D}_2H(z)&=(\bar \partial + z/2)\big[(1-|z|^2)e^{-|z|^2/2} \big]=
-ze^{-|z|^2/2}, \\
\mathcal{D}_1 \overline{\mathcal{D}_2}H(z)&=\overline{ \mathcal{D}_2 \overline{\mathcal{D}_1} H(z)}= (\partial -\bar z/2)(\partial + \bar z /2)\big[(1-|z|^2)e^{-|z|^2/2} \big] \\
&= -(\partial + \bar z /2)\big[ \bar z (2-|z|^2)e^{-|z|^2/2}\big] \\
&= -\bar z \overline{ \mathcal{D}_2 \big[ e^{-|z|^2/2}+ (1-|z|^2)e^{-|z|^2/2} \big]} = \bar z^2 e^{-|z|^2/2}, \\
\mathcal{D}_1 \overline{\mathcal{D}_1} H(z)&=-(\partial- \bar z/2) \big[z(2-|z|^2)e^{-|z|^2/2}\big] \\ 
&= 
-\bigg[(2-|z|^2)-|z|^2-\frac{|z|^2}{2}(2-|z|^2)-\frac{|z|^2}{2}(2-|z|^2) \bigg]e^{-|z|^2/2} \\
&= -(|z|^4-4|z|^2+2) e^{-|z|^2/2}=-2L_2(|z|^2)e^{-|z|^2/2}, \\
\mathcal{D}_2 \overline{\mathcal{D}_2}H(z) &= -(\bar \partial +z/2) \big( \bar z e^{-|z|^2/2}\big) =-e^{-|z|^2/2}. 
\end{align*}
These computations along with Lemma \ref{lem_precov} give us
\begin{align*}
\E \xi(z) \overline{\xi(w)} &= L_1(|z-w|^2)e^{-|z-w|^2/2+i \mathrm{Im}(z \bar w)}, \\
\E \xi'(z) \overline{\xi'(w)} &= -\frac12 \mathcal{T}_w \mathcal{D}_1 \overline{\mathcal{D}_1}H(z)= L_2(|z-w|^2) e^{-|z-w|^2/2+i \mathrm{Im}(z \bar w)}, \\
\E \xi''(z) \overline{ \xi''(w)} & = -\mathcal{T}_w \mathcal{D}_2 \overline{\mathcal{D}_2}H(z)=  e^{-|z-w|^2/2+i \mathrm{Im}(z \bar w)}, \\
\E \xi'(z) \overline{\xi(w)} &=
\frac{1}{\sqrt{2}}\mathcal{D}_1H(z-w)e^{i \mathrm{Im}(z \bar w)}= -\frac{1}{\sqrt{2}}\overline{(z-w)}(2-|z-w|^2) e^{-|z-w|^2/2+i \mathrm{Im}(z \bar w)},  \\
\E \xi(z) \overline{{\xi'(w)}}&= \frac{1}{\sqrt{2}}(z-w)(2-|z-w|^2) e^{-|z-w|^2/2+i \mathrm{Im}(z \bar w)}, \\
\E \xi''(z) \overline{\xi(w)} &= \mathcal{D}_2H(z-w) e^{i \mathrm{Im}(z \bar w)}= -(z-w) e^{-|z-w|^2/2+i \mathrm{Im}(z \bar w)}, \\
\E \xi(z) \overline{\xi''(w)}&= \overline{(z-w)}e^{-|z-w|^2/2+i \mathrm{Im}(z \bar w)}, \\
\E \xi''(z) \overline{\xi'(w)}&= -\frac{1}{\sqrt{2}}\mathcal{T}_w\mathcal{D}_2\overline{\mathcal{D}_1}H(z)= -\frac{1}{\sqrt{2}}(z-w)^2e^{-|z-w|^2/2+i \mathrm{Im}(z \bar w)}, \\
\E \xi'(z) \overline{\xi''(w)} &= -\frac{1}{\sqrt{2}}\overline{(z-w)}^2e^{-|z-w|^2/2+i \mathrm{Im}(z \bar w)}.
\end{align*}
To abbreviate notation, we will in the following write $s=|z-w|^2$. Applying Proposition \ref{prop_diag}, we obtain the following.
\begin{align*}
&\E\big( L_1(|\xi(z)|^2)L_1(|\xi'(z)|^2)L_1(|\xi(w)|^2)L_1(|\xi'(w)|^2)\big) 
 \\
 &=\big|\E \xi(z)\overline{\xi(w)}\cdot \E \xi'(z) \overline{\xi'(w)}+ \E \xi(z) \overline{\xi'(w)} \cdot \E \xi'(z) \overline{\xi(w)}\big|^2 = \big|L_1(s)L_2(s)-\frac12 s(2-s)^2\big|^2e^{-2s}, \\
&\E\big( L_1(|\xi(z)|^2)L_1(|\xi'(z)|^2)L_1(|\xi(w)|^2)L_1(|\xi''(w)|^2)\big) \\
&=\big|\E \xi(z)\overline{\xi(w)}\cdot \E \xi'(z) \overline{\xi''(w)}+ \E \xi(z) \overline{\xi''(w)} \cdot \E \xi'(z) \overline{\xi(w)}\big|^2 \\
&= \big|\frac{1}{\sqrt{2}} L_1(s)\overline{(z-w)}^2+\frac{1}{\sqrt{2}} \overline{(z-w)}^2(2-s)\big|^2 e^{-2s}= \frac{s^2}{2}\big|L_1(s)+(2-s)\big|^2e^{-2s}, \\
&\E\big( L_1(|\xi(z)|^2)L_1(|\xi'(z)|^2)L_1(|\xi'(w)|^2)L_1(|\xi''(w)|^2)\big) \\
&= |\E \xi(z)\overline{\xi'(w)}\cdot \E \xi'(z) \overline{\xi''(w)}+ \E \xi(z) \overline{\xi''(w)} \cdot \E \xi'(z) \overline{\xi'(w)}|^2 \\
&= \big|-\frac12(w-z)(2-s)\overline{(w-z)}^2-\overline{(z-w)}L_2(s)\big|^2e^{-2s} 
= s\big|s(2-s)/2-L_2(s)\big|^2 e^{-2s}, \\
&\E\big( L_1(|\xi(z)|^2)L_1(|\xi''(z)|^2)L_1(|\xi(w)|^2)L_1(|\xi''(w)|^2)\big) \\
&= |\E \xi(z)\overline{\xi(w)}\cdot \E \xi''(z) \overline{\xi''(w)}+ \E \xi(z) \overline{\xi''(w)} \cdot \E \xi''(z) \overline{\xi(w)}|^2 
=\big|L_1(s)-s\big|^2e^{-2s}, \\
&\E\big( L_1(|\xi(z)|^2)L_1(|\xi''(z)|^2)L_1(|\xi'(w)|^2)L_1(|\xi''(w)|^2)\big) \\
&= |\E \xi(z)\overline{\xi'(w)}\cdot \E \xi''(z) \overline{\xi''(w)}+ \E \xi(z) \overline{\xi''(w)} \cdot \E \xi''(z) \overline{\xi'(w)}|^2 \\
&= \big| \frac{1}{\sqrt{2}}(z-w)(2-s)- \frac{1}{\sqrt{2}} \overline{(z-w)}(w-z)^2 \big|^2 e^{-2s} \\
&= \frac12s \big|2-s-s|^2 e^{-2s}= 2s|1-s|^2e^{-2s}, \\
&\E\big( L_1(|\xi'(z)|^2)L_1(|\xi''(z)|^2)L_1(|\xi'(w)|^2)L_1(|\xi''(w)|^2)\big) \\
&= |\E \xi'(z)\overline{\xi'(w)}\cdot \E \xi''(z) \overline{\xi''(w)}+ \E \xi'(z) \overline{\xi''(w)} \cdot \E \xi''(z) \overline{\xi'(w)}|^2 
=\big|L_2(s)+\frac12 s^2\big|^2 e^{-2s}, \\
&\E L_1(|\xi(z)|^2)L_1(|\xi'(z)|^2)L_2(|\xi(w)|^2)= 2|\E \xi(z) \overline{\xi(w)}|^2 |\E \xi'(z) \overline{\xi(w)}|^2 
= 2 L_1(s)^2 \cdot \frac12 s (2-s)^2e^{-2s}, \\
&\E L_1(|\xi(z)|^2)L_1(|\xi'(z)|^2)L_2(|\xi'(w)|^2)= 2|\E \xi(z) \overline{\xi'(w)}|^2 |\E \xi'(z) \overline{\xi'(w)}|^2 
= 2 \frac12 s(2-s)^2L_2(s)^2e^{-2s}, \\ 
&\E L_1(|\xi(z)|^2)L_1(|\xi'(z)|^2)L_2(|\xi''(w)|^2)= 2|\E \xi(z) \overline{\xi''(w)}|^2  |\E \xi'(z) \overline{\xi''(w)}|^2 = 2s\cdot \frac12 s^2 e^{-2s}= s^3e^{-2s}, \\
&\E L_1(|\xi(z)|^2)L_1(|\xi''(z)|^2)L_2(|\xi(w)|^2)= 2|\E \xi(z) \overline{\xi(w)}|^2  |\E \xi''(z) \overline{\xi(w)}|^2 
= 2 L_1(s)^2se^{-2s}, \\
&\E L_1(|\xi(z)|^2)L_1(|\xi''(z)|^2)L_2(|\xi'(w)|^2)= 2|\E \xi(z) \overline{\xi'(w)}|^2  |\E \xi''(z) \overline{\xi'(w)}|^2 
= \frac12 s(2-s)^2s^2e^{-2s}, \\
&\E L_1(|\xi(z)|^2)L_1(|\xi''(z)|^2)L_2(|\xi''(w)|^2)= 2|\E \xi(z) \overline{\xi''(w)}|^2  |\E \xi''(z) \overline{\xi''(w)}|^2 
= 2se^{-2s},\\
&\E L_1(|\xi'(z)|^2)L_1(|\xi''(z)|^2)L_2(|\xi(w)|^2)= 2|\E \xi'(z) \overline{\xi(w)}|^2  |\E \xi''(z) \overline{\xi(w)}|^2 = s^2(2-s)^2e^{-2s}, \\
&\E L_1(|\xi'(z)|^2)L_1(|\xi''(z)|^2)L_2(|\xi'(w)|^2)= 2|\E \xi'(z) \overline{\xi'(w)}|^2  |\E \xi''(z) \overline{\xi'(w)}|^2 
= L_2(s)^2 s^2e^{-2s}, \\
&\E L_1(|\xi'(z)|^2)L_1(|\xi''(z)|^2)L_2(|\xi''(w)|^2)= 2|\E \xi'(z) \overline{\xi''(w)}|^2  |\E \xi''(z) \overline{\xi''(w)}|^2 
= s^2e^{-2s}, \\
&\E L_2(|\xi(z)|^2)L_2(|\xi(w)|^2) = 
|\E \xi(z) \overline{\xi(w)}|^4 =L_1(s)^4e^{-2s}, \\
&\E L_2(|\xi(z)|^2)L_2(|\xi'(w)|^2) = 
|\E \xi(z) \overline{\xi'(w)}|^4 =\frac14 s^2(2-s)^4e^{-2s}, \\
&\E L_2(|\xi(z)|^2)L_2(|\xi''(w)|^2) = 
|\E \xi(z) \overline{\xi''(w)}|^4 =s^2e^{-2s}, \\
&\E L_2(|\xi'(z)|^2)L_2(|\xi'(w)|^2) = 
|\E \xi'(z) \overline{\xi'(w)}|^4 =L_2(s)^4e^{-2s}, \\ 
&\E L_2(|\xi'(z)|^2)L_2(|\xi''(w)|^2) = 
|\E \xi'(z) \overline{\xi''(w)}|^4 =\frac14 s^4e^{-2s}, \\ 
&\E L_2(|\xi''(z)|^2)L_2(|\xi''(w)|^2) = 
|\E \xi''(z) \overline{\xi''(w)}|^4 =e^{-2s}.  
\end{align*}
These expressions are combined into an expression for $\E[\phi(z)\phi(w)]$ in the accompanying notebook \cite{jupy_gwhf_nonhyp}.

\section*{Acknowledgments}
The authors gratefully acknowledge support from the Research in Teams program ``Time-frequency analysis of random point processes'' of the Erwin Schrödinger International Institute for Mathematics and Physics of the University of Vienna. N.\ F. gratefully acknowledges the support of the Israel Science Foundation, grants 1327/19 and 3541/24.
This research was funded in whole or in part by the Austrian Science Fund (FWF): 10.55776/Y1199. For open access purposes, the authors have applied a CC BY public copyright license to any author-accepted manuscript version arising from this submission. The authors are grateful to Lukas Odelius for helpful comments.

\end{document}